\documentclass[10pt,oneside]{article}
\usepackage{mathrsfs}
\usepackage{amsmath,amsfonts,amssymb,amsthm}
\usepackage{caption}
\usepackage{subfigure}
\usepackage{color}
\usepackage{graphicx}
\usepackage{subfigure}
\usepackage{float}
\usepackage{paralist}
\usepackage{enumerate}
\usepackage{dsfont}
\usepackage{bm}
%\usepackage{lipsum}
%% AMS mathematics macros
\usepackage[colorlinks,
linkcolor=red,
citecolor=blue
]{hyperref}
\usepackage{authblk}
\usepackage[T1]{fontenc}
\allowdisplaybreaks
%% Title Information.

\hypersetup{colorlinks=false} 
\newtheorem{thm}{Theorem}[section]
\newtheorem{lem}[thm]{Lemma}
\newtheorem{prop}[thm]{Proposition}
\newtheorem{rem}{Remark}[section]
 %
 %
 %
 %
%
  % generate bold "Proof"
%\def\prf{\noindent{\bf Proof}. } %
%\def\endprf{\hspace*{\fill}~\QED\par\endtrivlist\unskip}%
\def\bma#1\ema{{\allowdisplaybreaks\begin{split}#1\end{split}}}

%\textheight=130truemm
\textheight=220truemm
%
%\textwidth=158truemm
\textwidth=160truemm
\hoffset=0truemm
\voffset=0truemm

%%%%%%%%%%Young-Pil%%%%%
%
%
%%%%%%%%%%%%%%%%%%%%%%%

\newcommand{\R}{\mathbb{R}}

%%%%%%%%%%%%%%%%%%%%%%%

\topmargin=-5truemm
\oddsidemargin=0truemm
\evensidemargin=0truemm
\linespread{1.3}
\numberwithin{equation}{section}

\begin{document}
	\title{Enhanced dissipation and temporal decay in the Euler-Poisson-Navier-Stokes equations} 
	\author[a]{ Young-Pil Choi \thanks{E-mail: ypchoi@yonsei.ac.kr(Y.-P. Choi)}}
	\author[b]{ Houzhi Tang \thanks{E-mail: houzhitang@amss.ac.cn(H.-Z. Tang)}}
	\author[c]{Weiyuan Zou \thanks{E-mail: zwy@amss.ac.cn(W.-Y. Zou)}}
		\affil[a]{Department of Mathematics,	Yonsei University, Seoul 03722, Republic of Korea}
	
	\affil[b]{Academy of Mathematics and Systems Science, Chinese Academy of Sciences, Beijing 100190, P. R. China}
	\affil[c]{College of Mathematics and Physics,
		Beijing University of Chemical Technology, Beijing 100029, P. R. China} 
	\date{}
	\renewcommand*{\Affilfont}{\small\it}	
	\maketitle

\begin{abstract}
This paper investigates the global well-posedness and large-time behavior of solutions for a coupled fluid model in $\mathbb{R}^3$ consisting of the isothermal compressible Euler-Poisson system and incompressible Navier-Stokes equations coupled through the drag force. Notably, we exploit the dissipation effects inherent in the Poisson equation to achieve a faster decay of fluid density compared to velocities. This strategic utilization of dissipation, together with the influence of the electric field and the damping structure induced by the drag force, leads to a remarkable decay behavior: the fluid density converges to equilibrium at a rate of $(1+t)^{-11/4}$, significantly faster than the decay rates of velocity differences $(1+t)^{-7/4}$ and velocities themselves $(1+t)^{-3/4}$ in the $L^2$ norm. Furthermore, under the condition of vanishing coupled incompressible flow, we demonstrate an exponential decay to a constant state for the solution of the corresponding system, the damped Euler-Poisson system. 
\end{abstract}
	\noindent{\textbf{Key words.}
		Euler-Poisson-Navier-Stokes system, global well-posedness, spectral analysis, optimal temporal decay rates.}\\
	\textbf{2020 MR Subject Classification:}\ 35B40, 35B65, 76N10

\tableofcontents

%%%%%%%%%%%%%%%%%%%%%%%%%%%%%%%%%%%%%%%%%%
%
%
%
%
%
%	
%%%%%%%%%%%%%%%%%%%%%%%%%%%%%%%%%%%%%%%%%%
\section{Introduction}
In this paper, we study the dynamics of a coupled fluid model, the so-called Euler-Poisson-Navier-Stokes (EP-NS) system, consisting of the isothermal compressible Euler-Poisson system and incompressible Navier-Stokes equations coupled through  the drag force:
\begin{equation}\label{Main1}
	\left\{\begin{array}{lllll}
		\displaystyle \partial_t\rho+\textrm{div}(\rho u)=0, \quad (x,t)\in\mathbb{R}^3\times\mathbb{R}_+, \\
		\displaystyle \partial_t(\rho u)+\mathrm{div}\big(\rho u\otimes u\big)
		+\nabla\rho=-\rho(u-v)-\rho \nabla U, \\
		\displaystyle -\Delta U=\rho-1, \\
		\displaystyle \partial_tv+v\cdot\nabla v+\nabla P=\Delta v+\rho(u-v),\\
		\displaystyle \text{div}v=0.
	\end{array}\right.
\end{equation}
Here $\rho=\rho(x,t)$ and $u=u(x,t)$ denote the density and velocity for the compressible Euler fluid flow, respectively, $U=U(x,t)$ is the electric potential, $v=v(x,t)$ and $P=P(x,t)$ represent the velocity and pressure for the incompressible Navier-Stokes fluid flow, respectively. 

We supply the EP-NS system \eqref{Main1} with the initial data and far-field states: 
\begin{equation*}%\label{far-field}
	(\rho,u,v)(x,0)=(\rho_{0},u_{0},v_{0})(x), \quad \lim_{|x|\rightarrow +\infty}(\rho,u,v)(x,t)=(1,0,0). 
\end{equation*}
respectively.  Moreover, we assume the neutrality and non-vacuum conditions for the initial density:
\begin{equation}\label{neutral}
	\int_{\mathbb{R}^3}(\rho_0-1)\,dx=0 \quad \mbox{and} \quad \rho_0(x)>0 \mbox{ for all $x \in \R^3$}.
\end{equation}

In recent years, the interactions between particles and fluids have gained significant attention due to their widespread applications across various fields, including biotechnology, medicine, and the study of complex fluid phenomena. These interactions play a crucial role in phenomena such as sedimentation, the compressibility of droplets in sprays, the formation of cooling tower plumes, and the combustion processes in diesel engines \cite{MR2226800,MR3396231,o1981collective, Williams1958Spray}. For a more comprehensive understanding of these phenomena, readers are directed to \cite{MR2601394,o1981collective} for additional insights into the physical background of multiphase flow models. Beyond their practical implications, the mathematical analysis of models capturing these interactions holds significant importance. By rigorously examining the underlying mathematical structures and dynamics, a deeper understanding of the behavior of real-world systems can be attained. Furthermore, the development of mathematical models facilitates the prediction and optimization of processes in diverse applications. 
 
\paragraph{Related Literature and Objective} In the current work, we investigate the global-in-time existence and uniqueness of classical solutions to the EP-NS system \eqref{Main1} and its large-time behaviors. In the last decade, there have been significant developments in the mathematical analysis of the kinetic-fluid or coupled fluid models. To put our study in the proper perspective, we shall recall a few of the references on the Cauchy problem for coupled fluid models. Choi and Jung \cite{MR4604417} studied the hydrodynamic limit from the kinetic-fluid model, the nonlinear Vlasov-Poisson-Fokker-Planck equation coupled with the incompressible Navier-Stokes system, towards the EP-NS system \eqref{Main1} in the periodic spatial domain employing the relative entropy argument, see \cite{CCK16} for the case without Poisson interactions. Specifically, the global-in-time existences of solutions to both equations, the kinetic-fluid and the EP-NS equations, are established, and thus the hydrodynamic limit holds for all times. The large-time behavior of solutions to \eqref{Main1} in the periodic domain showing the exponential decay rate of convergences towards the equilibrium is obtained in \cite{CJ21}. In that paper, the Poincar\'e inequality is crucially used, and thus it is not clear to extend that result to the study of large-time behavior of solutions in the whole space. On the other hand, in the absence of the Poisson interaction, \eqref{Main1} reduces to the Euler-Navier-Stokes system, and Choi et al \cite{CJK24} and Huang et al \cite{huang2024global} established the global well-posedness and optimal decay estimates in the whole space. In the case where the coupled flow exhibits compressibility, Choi \cite{MR3546341} investigated the global existence and exponential decay rates of classical solutions in the periodic domain. Later, Wu et al \cite{MR4175837} extended the analysis of large-time behavior of solutions to the whole space case, obtaining optimal algebraic decay rates. Meanwhile, Wu and Zou \cite{MR4703478} focused on space-time estimates for this model. Notably, the coupling of the two-phase flow through the drag force term $\rho(u-v)$ enables the smooth effect of the Navier-Stokes equations to propagate to the Euler system, ensuring global well-posedness. Remarkably, observations on the large-time behavior in three-dimensional space indicate that the decay properties of the Euler flow remain consistent with those of incompressible or compressible Navier-Stokes flow through the drag force.
 
The primary focus of this paper lies in addressing the Cauchy problem of system \eqref{Main1} in $\mathbb{R}^3$. We are motivated by two key objectives:

\begin{enumerate}[(i)]
\item {\bf Preventing Singularities:} The Euler-Poisson equations are known to develop singularities in finite time, irrespective of the smoothness of the initial data \cite{MR3564592,MR1400743,MR1416291,MR1855666,MR1918784}. An intriguing question arises: can we utilize the smooth effect arising from coupled Navier-Stokes flow to establish the existence of solutions for the system \eqref{Main1}?
\item {\bf Understanding Electric Field Influence:} Building upon previous works on the compressible Navier-Stokes-Poisson system \cite{MR2609958,MR2917409} and Euler-Poisson system \cite{MR1637856, GHZ17, IP13, LW14}, we aim to elucidate the influence of the electric field on coupled fluid models.
\end{enumerate}

To address these questions, we employ a combination of methodologies, including the Hodge decomposition and spectral analysis for the linearized system around the equilibrium state, energy methods, and low-high frequency decomposition for nonlinear equations. Our results reveal that, owing to the smooth effect of incompressible flow, the formation of singularities in the Euler-Poisson system can be prevented through the drag force. Furthermore, influenced by the electric field, the density of particles exhibits an algebraic decay rate of $(1+t)^{-11/4}$ in $L^2$-norm, contrasting with the velocity decay rate of $(1+t)^{-3/4}$, which aligns with the order of the nonlinear term $\|u\cdot\nabla u\|_{L^2}$. In order to clearly distinguish these differences in convergence rates, under additional assumption on the initial velocities, we provide a lower bound decay rate for the velocity, indicating the decay rate $(1+t)^{-3/4}$ is optimal. We also investigate the decay rates for higher-order spatial derivatives of solutions, demonstrating enhancements in decay rates. Interestingly, for the corresponding linear system, the density decays to a constant state exponentially, while velocities tend to zero algebraically. This disparity diverges from previous works \cite{MR2609958,MR2917409}, where the density decay was dominated by the linear part and occurred at the same rate as $\|\nabla u\|_{L^2}$, namely $(1+t)^{-5/4}$. Consequently, while it may not be feasible to establish a lower-bound decay rate for the density of the EP-NS system, such a bound still holds for velocities. We also would like to point out that the linearized Euler-Poisson system for irrotational flows has the Klein-Gordon structure \cite{MR1637856}. Thus, the density cannot have the exponential decay rate of convergence, nor can it have a faster decay rate than the velocity. Moreover, the derivative of solutions does not enhance the convergence rate. However, in our coupled model, the two fluids are coupled through the drag force, which imparts a linear damping effect on velocity, resulting in a completely different behavior of fluid density, as stated above. Furthermore, in scenarios where the coupled incompressible flow vanishes, we observe exponential decay rates for solutions of the Euler-Poisson system with damping, further highlighting the influence of coupled flow on the Euler-Poisson system.

\paragraph{Notation} Here we introduce several notations used throughout the paper. $L^p(\mathbb{R}^3)$ and $W^{k,p}(\mathbb{R}^3)$ denote the usual Lebesgue and Sobolev space on $\mathbb{R}^3$, with norms $\|\cdot\|_{L^p}$ and $\|\cdot\|_{W^{k,p}}$, respectively.  When $p=2$, we denote $W^{k,p}(\mathbb{R}^3)$ by $H^k(\mathbb{R}^3)$ with the norm $\|\cdot\|_{H^k}$, and set
$$
\|u\|_{H^k(\mathbb{R}^3)}=\|u\|_{H^k},\quad \|u\|_{L^p(\mathbb{R}^3)}=\|u\|_{L^p}.
$$
We denote by $C$ a generic positive constant which may vary in different estimates. The notation $f_1\lesssim f_2$ means that there exists a constant $C>0$ such that $f_1\leq C f_2$. The symbol $f_1\simeq f_2$ represents the functions $f_1$ and $f_2$ are equivalent, which means $f_1 \lesssim f_2$ and $f_2 \lesssim f_1$.  If $\underset{x\rightarrow 0}{\lim}~\frac{f(x)}{g(x)}=C$ , we denote $f(x)=O(g(x))$.  For an integer $k$,  the symbol $\nabla^k$ denotes the summation of all terms $D^\ell=\partial_{x_1}^{\ell_1}\partial_{x_2}^{\ell_2}\partial_{x_3}^{\ell_3}$ with the multi-index $\ell$ satisfying $|\ell|=\ell_1+\ell_2+\ell_3=k$. For a function $f$, $\|f\|_{X}$ denotes the norm of $f$ in the space $X$. The notation $\|(f,g)\|_{X}$ indicates $\|f\|_{X}+\|g\|_{X}$. The Fourier transform of $f$ is denoted by $\hat{f}$ or $\mathscr{F}[f]$ and is defined by
\begin{equation*}
\hat{f}(\xi) =\mathscr{F}[f](\xi)=(2\pi)^{-\frac{3}{2}}\int_{\mathbb{R}^3} f(x)e^{-ix\cdot\xi}\,dx,\quad \xi\in\mathbb{R}^3.
\end{equation*}
The operator $\Lambda^a$ stands for the pseudodifferential operator defined by
\begin{equation}\label{Lambda}
	\Lambda^a f=\mathscr{F}^{-1}\Big(|\xi|^a \hat{f}(\xi)\Big)\ \ \text{for}\ a\in \mathbb{R}.
\end{equation}
		Meanwhile, we define the homogenous Sobolev space $\dot{H}^a(\mathbb{R}^3)$ of all $f$ and $a\in \mathbb{R}$ satisfying
		\begin{align*}
			\|f\|_{\dot{H}^a}\triangleq\|\Lambda^{a}f\|_{L^2}=\big\||\xi|^a\hat{f}(\xi)\big\|_{L^2}<\infty.
		\end{align*}
 
%%%%%%%%%%%%%%%%%%%%%%%%%%%%%%%%%%%%%%%%%%
%
%
%
%
%
%	
%%%%%%%%%%%%%%%%%%%%%%%%%%%%%%%%%%%%%%%%%%
 \paragraph{Outline of Paper} The remainder of the paper is organized as follows. In Section \ref{sec:main}, we present our main results concerning the global-in-time existence and uniqueness of classical solutions to the EP-NS system \eqref{Main1}, as well as the damped Euler-Poisson system near equilibrium and its large-time behavior estimates. Section \ref{Sec2} focuses on the global-in-time solvability of the EP-NS system. In Section \ref{Sec3}, we explore the study of the large-time behavior of solutions. Here, we begin with a spectral analysis for the linearized system and subsequently investigate the nonlinear decay estimates for the EP-NS system, particularly focusing on upper-bound decay rates. Section \ref{Sec5} presents the optimal decay estimates for the nonlinear system, providing the lower-bound estimates. Finally, in Section \ref{Sec6}, we analyze the global Cauchy problem for the Euler-Poisson system with linear damping.

%%%%%%%%%%%%%%%%%%%%%%%%%%%%%%%%%%%%%%%%%%
%
%
%
%
%
%	
%%%%%%%%%%%%%%%%%%%%%%%%%%%%%%%%%%%%%%%%%%
\section{Main results}\label{sec:main}
To capture the dissipation structure of the system \eqref{Main1} more clearly, we introduce a new variable $n=\log \rho$ and its initial counterpart $n_0=\log \rho_0$.   With this transformation, the system \eqref{Main1} can be reformulated as follows:
\begin{equation}\label{Main2}
	\left\{
	\begin{aligned}
		&\partial_t n+u\cdot\nabla n +\text{div}u=0,\\
		&\partial_tu+u\cdot\nabla u+\nabla n+u-v=-\nabla U, \\
		&-\Delta U=e^n-1, \\
		&\partial_tv+v\cdot\nabla v+\nabla P=\Delta v+e^{n}(u-v),\\
		&\text{div} v=0,
	\end{aligned}
	\right.
\end{equation}
with the initial data and far-field states
\begin{equation}\label{in-data}
	(n,u,v)|_{t=0}=(n_0,u_0,v_0),\quad  \lim_{|x|\rightarrow +\infty}(n,u,v)(x,t)=(0,0,0).
\end{equation}
The neutrality condition \eqref{neutral} become
\begin{equation}\label{netura}
	\int_{\mathbb{R}^3}(e^{n_0}-1)\,dx=0.
\end{equation}
Throughout the paper, we assume the above neutrality condition holds.

Now we are in a position to state the main results:
	\begin{thm}\label{thm1}
	Assume that the initial data satisfy 
	\[
	(n_0,u_0,v_0)\in H^3\cap \dot{H}^{-1}(\mathbb{R}^3) \times H^3(\mathbb{R}^3) \times H^3(\mathbb{R}^3)\quad \mbox{and} \quad {\rm{div}}\,v_0=0. 
	\]
	There exists a small positive constant $\delta_0$ such that if
	\begin{equation*}%\label{in-data-e0}
		\|(n_0,u_0,v_0)\|_{H^3}+\|n_0\|_{\dot{H}^{-1}}\leq \delta_0,
	\end{equation*}
	then the Cauchy problem \eqref{Main2}-\eqref{in-data} admits a unique global-in-time classical solution 
	\[
	(n,u,v) \in L^\infty(\R_+; H^3\cap \dot{H}^{-1}(\R^3)) \times L^\infty(\R_+; H^3(\R^3)) \times L^\infty(\R_+; H^3(\R^3)) 
	\]
	such that 
	\begin{equation}\label{main-est}
		\begin{aligned}
	 &\|(n, u, v)(t)\|_{H^3}^2+\|n(t)\|_{\dot{H}^{-1}}^2\\
	 &\quad +C \int_0^t\left(\| n(s)\|_{H^3}^2+\| \nabla v(s)\|_{H^3}^2+\| (u-v)(s) \|_{H^3}^2\right) ds \leq C\delta_0^2,
	\end{aligned}
	\end{equation} 
	for any $t\in\mathbb{R}_+$. Moreover, if $\|(u_0,v_0)\|_{L^1}\leq \delta_0$, then it holds that
	\begin{align}
		\|n(t)\|_{\dot{H}^{-1}}&\leq C\delta_0(1+t)^{-2}, \nonumber\\ 
		\|\nabla^kn(t)\|_{L^2}&\leq C\delta_0(1+t)^{-\frac{11}{4}-\frac{k}{2}}, &&k=0,1,2,\label{032701}\\
		\|\nabla^3n(t)\|_{L^2}&\leq C\delta_0(1+t)^{-\frac{15}{4}}, \nonumber\\
		\|\nabla^k(u,v)(t)\|_{L^2}&\leq C\delta_0(1+t)^{-\frac{3}{4}-\frac{k}{2}}, &&k=0,1,2,3,
		\label{032702}
		\end{align}
	and the velocity difference $u-v$ has a faster rate compared to themselves
	\begin{align}\label{040101}
\|\nabla^k(u-v)(t)\|_{L^2}\leq C\delta_0(1+t)^{-\frac{7}{4}-\frac{k}{2}},\quad k=0,1,
	\end{align}
where the constant $C>0$ only depends on the initial data.
\end{thm}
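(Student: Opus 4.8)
The plan is to establish Theorem~\ref{thm1} in four stages: (A) local existence together with a closed a priori estimate giving the global bound \eqref{main-est}; (B) a spectral and Green's-function analysis of the linearization of \eqref{Main2} at $(n,u,v)=(0,0,0)$; (C) the decay upper bounds \eqref{032701}--\eqref{040101} via Duhamel's principle combined with a low--high frequency decomposition and a bootstrap; and (D) the matching lower bound for the velocities. Everything is done on the log-density system \eqref{Main2}, which carries four dissipative effects: the Navier--Stokes viscosity $\Delta v$, the damping $-(u-v)$ in the Euler block and its drag counterpart $+e^{n}(u-v)$ in the Navier--Stokes block, and a hidden dissipation of the density produced by the Poisson coupling together with the damping. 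The last effect is the crux: taking the divergence of the Euler momentum equation and using the continuity and Poisson equations to remove $\mathrm{div}\,u$ and $\Delta U$, one sees that $n$ obeys, modulo quadratic terms, a damped Klein--Gordon type relation $\partial_t^2 n+\partial_t n+n-\Delta n=(\mathrm{nonlinear})$ --- the $-\Delta n$ from the pressure, the zeroth-order term $+n$ from the Poisson force, and $\partial_t n$ from the drag-induced damping. It is precisely the presence of $+n$ \emph{together with} the damping that turns the purely dispersive Euler--Poisson density into a genuinely, and as it turns out faster-than-velocity, decaying quantity.

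\noindent\textbf{Stage (A).} Construct a local classical $H^3$ solution by a standard iteration scheme, using $\mathrm{div}\,v_0=0$ and the parabolic smoothing of the $v$-equation, and then close a uniform bound for $\mathcal E(t):=\|(n,u,v)(t)\|_{H^3}^2+\|n(t)\|_{\dot{H}^{-1}}^2$. Carry out $\nabla^k$ energy estimates on \eqref{Main2} for $0\le k\le 3$: the drag coupling produces $-\|u-v\|_{L^2}^2$, the viscosity produces $\|\nabla v\|_{L^2}^2$, and the electric field is treated through $\int\nabla U\cdot u=-\int U\,\mathrm{div}\,u$ and the continuity equation, so that the electric energy $\|\nabla U\|_{L^2}^2=\|e^{n}-1\|_{\dot{H}^{-1}}^2\simeq\|n\|_{\dot{H}^{-1}}^2$ is incorporated into $\mathcal E$. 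To generate the density dissipation, add to $\mathcal E$ a small multiple of the interaction functionals $-\sum_{k\le 2}\langle\nabla^{k}\mathrm{div}\,u,\nabla^{k}n\rangle$; differentiating in time and inserting $\partial_t\mathrm{div}\,u=-\Delta n+(e^{n}-1)-\mathrm{div}\,u+(\mathrm{nonlinear})$ yields $-c\|n\|_{H^3}^2$ --- the pressure contributing the $\|\nabla^{k+1}n\|_{L^2}^2$ part and the Poisson term the zeroth-order $\simeq\|\nabla^{k}n\|_{L^2}^2$ part --- plus an error term $\|\mathrm{div}\,u\|_{H^2}^2$, which is harmless because $\mathrm{div}\,v=0$ forces $\mathrm{div}\,u=\mathrm{div}(u-v)$, hence $\|\mathrm{div}\,u\|_{H^2}^2\le\|u-v\|_{H^3}^2$ is already part of the dissipation $\mathcal D(t):=\|n\|_{H^3}^2+\|\nabla v\|_{H^3}^2+\|u-v\|_{H^3}^2$ (which moreover controls $\|\nabla u\|_{H^2}$ through $u-v$ and $\nabla v$, so all cubic remainders are bounded by $\sqrt{\mathcal E}\,\mathcal D$). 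This gives $\frac{d}{dt}\mathcal E+c\,\mathcal D\lesssim\sqrt{\mathcal E}\,\mathcal D$; the neutrality condition \eqref{netura} is preserved by the transport structure (so $e^{n}-1\in\dot{H}^{-1}$ throughout), and a continuity argument for $\delta_0\ll1$ yields global existence and \eqref{main-est}.

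\noindent\textbf{Stage (B).} Linearize \eqref{Main2} and apply $\mathrm{div}$ and the Leray projection. Because $\mathrm{div}\,v=0$, the gradient part of $v$ vanishes, so the compressible pair $(n,\Lambda^{-1}\mathrm{div}\,u)$ decouples from $v$ and, in Fourier, solves a $2\times2$ system with characteristic equation $\lambda^2+\lambda+|\xi|^2+1=0$ --- the $+1$ being the Poisson contribution --- whose roots have real part identically $-\tfrac12$: thus $n$ and the compressible part of $u$ decay exponentially at \emph{every} frequency, with the $\dot{H}^{-1}$-norm of the initial density entering because the $\xi\to 0$ behaviour of this block involves $|\xi|^{-1}$. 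The incompressible pair $(\mathbb P u,v)$ solves a $2\times2$ system with characteristic equation $\lambda^2+(2+|\xi|^2)\lambda+|\xi|^2=0$, whose roots are $\lambda_+(\xi)=-\tfrac12|\xi|^2+O(|\xi|^4)$ (heat-like) and $\lambda_-(\xi)=-2+O(|\xi|^2)$. Splitting the Fourier variable into $|\xi|\le r_0$ and $|\xi|>r_0$, one extracts the linear $L^1$--$L^2$ estimates for $e^{t\mathcal L}$: exponential decay of all $\nabla^kn$; the rate $(1+t)^{-3/4-k/2}$ for $\nabla^k(u,v)$; and --- the mechanism behind \eqref{040101} --- since the eigenprojection onto $\lambda_+$ identifies the $u$- and $v$-components up to an $O(|\xi|^2)$ error, $\widehat{u-v}$ restricted to the slow manifold carries an extra factor $|\xi|^2$, which improves the rate to $(1+t)^{-7/4-k/2}$ for $k=0,1$; the compressible part of $u-v$ equals that of $u$ and is exponentially small.

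\noindent\textbf{Stages (C)--(D), and the main difficulty.} Write $W=(n,u,v)=e^{t\mathcal L}W_0+\int_0^t e^{(t-s)\mathcal L}\mathcal N(W)(s)\,ds$, where $\mathcal N$ gathers $u\cdot\nabla n$, $u\cdot\nabla u$, $v\cdot\nabla v$, $(e^{n}-1)(u-v)$ and the Poisson remainder $e^{n}-1-n$; crucially the forcing of the $n$-block is in divergence form. Run a bootstrap on $M(t)$, the supremum over $s\le t$ of the left-hand sides of \eqref{032701}--\eqref{040101} divided by their stated weights; assuming $M(t)\le C\delta_0$, estimate $\mathcal N$ by products of these weighted norms, the decisive bounds being $\|u\cdot\nabla u\|_{L^2}\lesssim\|u\|_{L^\infty}\|\nabla u\|_{L^2}\lesssim\|\nabla u\|_{L^2}^{3/2}\|\nabla^2u\|_{L^2}^{1/2}\lesssim\delta_0^2(1+t)^{-11/4}$ (and $(1+t)^{-11/4-k/2}$ for the $k$-th derivative when $k\le2$), the same for $v\cdot\nabla v$, and $\|\mathcal N\|_{L^1}\lesssim\|(u,v)\|_{L^2}\|\nabla(u,v)\|_{L^2}\lesssim\delta_0^2(1+t)^{-2}$. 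At low frequencies, feed these into the Stage (B) semigroup estimates: the uniform exponential stability of the $n$-block converts the $(1+t)^{-11/4}$ forcing into \eqref{032701} (the density rate saturating at $(1+t)^{-15/4}$ at the top order $k=3$, where no smoothing is available and one relies on the energy method); the heat mode integrates the velocity nonlinearities into \eqref{032702}, the linear part dominating, which is why the velocity rate is only $(1+t)^{-3/4-k/2}$; the $|\xi|^2$-gain yields \eqref{040101}; and the $\dot{H}^{-1}$ bound follows from the divergence form of the $n$-forcing together with the integrability of $|\xi|^{-2}$ near the origin in $\R^3$. High frequencies and the higher derivatives are handled by a time-weighted version of the energy estimate of Stage (A), closed with this low-frequency input; combining the two closes the bootstrap and gives \eqref{032701}--\eqref{040101}. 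For (D), under a nondegeneracy condition on $(u_0,v_0)$ at $\xi=0$ one compares $W$ with $e^{t\mathcal L}W_0$: the linear velocity part enjoys a genuine lower bound $\gtrsim\delta_0(1+t)^{-3/4}$ from the heat mode, while by the upper bounds the Duhamel correction is $o((1+t)^{-3/4})$, so $(1+t)^{-3/4}$ is optimal. I expect the principal obstacle to be Stage (A): in contrast to the bare Euler--Poisson system, whose irrotational linearization is a Klein--Gordon equation with no $L^2$ dissipation of the density and no derivative gain, here one must extract the full $\|n\|_{H^3}^2$ from the interplay of the Poisson term, the damping and the continuity equation through the right interaction functional while keeping every remainder absorbable (the identity $\mathrm{div}\,u=\mathrm{div}(u-v)$ being essential), and then, in (B)--(C), exploit the exponential stability of the compressible block and the $|\xi|^2$-cancellation for $u-v$ sharply enough that the density and the velocity difference genuinely outpace the velocities.
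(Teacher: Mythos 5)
Your proposal follows essentially the same route as the paper: the hypocoercivity-type cross terms $\sum_{k}\int\nabla^{k}u\cdot\nabla^{k+1}n\,dx$ to extract the density dissipation and close the global a priori estimate, the Hodge decomposition leading to the characteristic polynomials $\lambda^2+\lambda+1+|\xi|^2$ (compressible block, uniformly exponentially stable) and $\lambda^2+(2+|\xi|^2)\lambda+|\xi|^2$ (incompressible block, heat-like slow mode), and a Duhamel bootstrap on the time-weighted functional $M(t)$ combined with a low--high frequency splitting and a high-frequency energy estimate for the top-order derivatives. The only notable deviation is that you obtain the upper bound \eqref{040101} for $u-v$ directly from the $O(|\xi|^2)$ cancellation in the slow mode of the Green function, whereas the paper's Proposition \ref{lemA1} derives it from an ODE-type energy estimate on the equation for $u-v$ together with the bound $\|\nabla(-\Delta)^{-1}n\|_{L^2}\lesssim\delta_0(1+t)^{-2}$ (reserving the Green-function cancellation for the lower-bound analysis in Section \ref{Sec5}); both routes are valid.
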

\begin{rem}
	By using classical Sobolev interpolation inequalities, we obtain the $L^\infty$ decay rates of the solution as follows:
\[
	\|n(t)\|_{L^\infty}\leq C(1+t)^{-\frac{7}{2}} \quad \mbox{and} \quad \|(u,v)(t)\|_{L^\infty}\leq C(1+t)^{-\frac{3}{2}},
\]
where $C>0$ is independent of $t$.
\end{rem}
\begin{rem}
During the derivation of estimates \eqref{032701} and \eqref{032702}, we observe that the decay rates of $\|\nabla^k n(t)\|_{L^2}$ for $k=0,1,2$ are governed by $\|\nabla^k (u\cdot\nabla u)\|_{L^2}$, indicating the significant impact of nonlinear terms arising from velocity on the density. 
\end{rem}

As our second main result, we establish that the lower bound decay rates of velocities coincide with the upper bound for certain initial data, indicating the optimality of the observed rates in Theorem \ref{thm1}. This result can be stated as follows:
	\begin{thm}\label{thm3}
	Assume that all the conditions in Theorem \ref{thm1} hold. If the Fourier transform $(\hat{u}_0(\xi),\hat{v}_0(\xi))$ of the initial velocities $(u_0,v_0)$ satisfies
	\begin{equation}\label{in-data-optimal}
		\inf_{ |\xi|<r_0}\Big\{\Big|\hat{v}_0(\xi)+\Big(I-\frac{\xi\xi^t}{|\xi|^2}\Big)\hat{u}_0(\xi)\Big|\Big\} \geq \alpha_0>0,
	\end{equation}
	where $\alpha_0$ denotes a positive constant and $r_0$ is sufficiently small,  then it holds for large-time that 
	\begin{equation*}%\label{optimal-est}
		c_*(1+t)^{-\frac{3}{4}-\frac{k}{2}}\leq \|\nabla^k(u,v)(t)\|_{L^2}\leq C\delta_0(1+t)^{-\frac{3}{4}-\frac{k}{2}},\quad k=0,1,2,3, 
	\end{equation*}
	and
\begin{align*}
\bar{c}_*(1+t)^{-\frac{7}{4}-\frac{k}{2}}\leq \|\nabla^k(u-v)(t)\|_{L^2}\leq C\delta_0(1+t)^{-\frac{7}{4}-\frac{k}{2}},\quad k=0,1,
\end{align*}
where $c_*$ and $\bar{c}_*$ are positive constants independent of time.
\end{thm}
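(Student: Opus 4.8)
The plan is to establish the lower bounds by extracting the precise leading-order behavior of the linearized semigroup on low frequencies and showing that the nonlinear contributions are genuinely of higher order. First I would revisit the spectral analysis from Section~\ref{Sec3}: after applying the Hodge decomposition, write $u = \nabla\Delta^{-1}\mathrm{div}\,u + \mathbb{P}u$ and similarly for $v$, and identify the eigenvalues of the linearized operator near $\xi = 0$. The point is that the solenoidal components of $u$ and $v$ decouple (up to the drag coupling) and behave like a damped-heat system whose slow mode decays like $e^{-c|\xi|^2 t}$ with a nonvanishing amplitude, while the compressible part of the Euler flow carries the electric-field/damping structure forcing it to decay strictly faster. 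Concretely, one writes the solution of the full nonlinear system via Duhamel's formula $W(t) = e^{t\mathcal{L}}W_0 + \int_0^t e^{(t-s)\mathcal{L}}\mathcal{N}(W)(s)\,ds$ and aims to show $\|e^{t\mathcal{L}}W_0\|_{\dot H^k}\gtrsim (1+t)^{-3/4-k/2}$ for the velocity block under the nondegeneracy hypothesis \eqref{in-data-optimal}, and that the Duhamel integral is $o((1+t)^{-3/4-k/2})$.

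Second, for the linear lower bound I would exploit \eqref{in-data-optimal} directly: the quantity $\hat v_0(\xi) + (I - \xi\xi^t/|\xi|^2)\hat u_0(\xi)$ is precisely (up to normalization) the projection of the initial data onto the slowly-decaying eigenmode of the linearized velocity system, so on the ball $|\xi| < r_0$ the low-frequency part of $\widehat{(u,v)}(t)$ is bounded below by $\alpha_0 e^{-c|\xi|^2 t}$ times a smooth nonvanishing factor. Multiplying by $|\xi|^{2k}$, integrating over $|\xi| < r_0$, and using the standard computation $\int_{|\xi|<r_0} |\xi|^{2k} e^{-c|\xi|^2 t}\,d\xi \simeq (1+t)^{-3/2-k}$ yields the desired lower bound $c_*(1+t)^{-3/4-k/2}$ for $\|\nabla^k(u,v)\|_{L^2}$. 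For the velocity difference $u-v$, the analogous statement requires tracking the next-order term in the spectral expansion: $\widehat{(u-v)}(t)$ has no contribution from the slowest mode at leading order, and its amplitude is $O(|\xi|^2)$ relative to $(u,v)$, which is exactly what produces the improved rate $(1+t)^{-7/4-k/2}$; here one needs the eigenprojections computed to one higher order in $|\xi|$, and a matching lower bound on the resulting coefficient, again guaranteed by \eqref{in-data-optimal} possibly after shrinking $r_0$.

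Third, I would control the nonlinear Duhamel term. Using the decay estimates \eqref{032701}--\eqref{040101} already proved in Theorem~\ref{thm1}, the nonlinearity $\mathcal{N}(W)$ (quadratic terms like $u\cdot\nabla u$, $v\cdot\nabla v$, $\mathrm{div}(nu)$, and the exponential corrections $e^n - 1 - n$, $(e^n-1)(u-v)$) decays in $L^1 \cap L^2$ fast enough that, combined with the $L^1$-to-$L^2$ and $L^2$-to-$L^2$ bounds on $e^{t\mathcal{L}}$, the integral $\int_0^t e^{(t-s)\mathcal{L}}\mathcal{N}(W)(s)\,ds$ decays strictly faster than $(1+t)^{-3/4-k/2}$ — one typically gains a logarithm or a genuine power by splitting $\int_0^{t/2}$ and $\int_{t/2}^t$ and using that $\|\mathcal{N}(W)(s)\|_{L^1} \lesssim \delta_0^2 (1+s)^{-\beta}$ with $\beta$ large (the velocity self-interaction gives $\|u\cdot\nabla u\|_{L^1}\lesssim \|u\|_{L^2}\|\nabla u\|_{L^2}\lesssim \delta_0^2(1+s)^{-9/4}$). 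Then by the triangle inequality $\|\nabla^k W(t)\| \geq \|\nabla^k e^{t\mathcal L}W_0\| - \|\nabla^k(\text{Duhamel})\| \geq c_*(1+t)^{-3/4-k/2} - C\delta_0^2(1+t)^{-3/4-k/2-\epsilon}$, and for $\delta_0$ small and $t$ large the first term dominates, giving the lower bound with a possibly smaller constant.

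The main obstacle I anticipate is the lower bound for the velocity difference $u - v$: it demands a genuinely second-order spectral expansion of the linearized semigroup (the leading mode cancels in $u-v$), careful verification that the coefficient of the surviving $O(|\xi|^2)$ term is bounded below under \eqref{in-data-optimal}, and a correspondingly sharper estimate on the nonlinear remainder — since the target rate $(1+t)^{-7/4}$ is only marginally faster than some nonlinear contributions, one must check that no quadratic term (in particular the coupling $(e^n-1)(u-v)$ and the pressure/Leray-projection interactions) produces a term decaying as slowly as $(1+t)^{-7/4}$ in the relevant projected component. This is where the structure of the drag coupling and the extra decay of $n$ from \eqref{032701} must be used decisively rather than just dimensionally.
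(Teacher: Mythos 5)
Your proposal follows the same overall framework as the paper's proof (Duhamel splitting into linear and nonlinear contributions, a Fourier-space lower bound on the linear semigroup driven by the nondegeneracy assumption \eqref{in-data-optimal}, an upper bound on the Duhamel integral from the decay rates of Theorem \ref{thm1}, and a triangle inequality), and the observation that the leading mode cancels in $u-v$ so one needs the $O(|\xi|^2)$ coefficient is exactly the mechanism used in Proposition \ref{pro-li-d}. However, there are two points where what you wrote diverges from what actually makes the argument close.

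First, your claim that the Duhamel integral ``decays strictly faster than $(1+t)^{-3/4-k/2}$ -- one typically gains a logarithm or a genuine power'' is incorrect here. Estimating $\|N_2\|_{L^2}$ with the $L^1 \to L^2$ Green-function bound $\|S^\ell_{2j}(t-s)*g\|_{L^2} \lesssim (1+t-s)^{-3/4}\|g\|_{L^1}$ and $\|h_j(s)\|_{L^1} \lesssim \delta_0^2(1+s)^{-2}$, the $\int_0^{t/2}$ piece already saturates at $C\delta_0^2(1+t)^{-3/4}$, the \emph{same} rate as the linear part. The argument still works, but not because the nonlinear term is asymptotically smaller in $t$; it works because the coefficient is $\delta_0^2$ while the linear lower bound has coefficient $d_*\sim\alpha_0$, and $\delta_0$ can be taken small independently of $\alpha_0$. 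You should state this explicitly, since a reader following ``strictly faster'' will look for a gain in the exponent that is not there.

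Second, and more substantively, obtaining the derivative lower bounds $\|\nabla^k(u,v)\| \gtrsim (1+t)^{-3/4-k/2}$ for $k$ up to $3$ by direct Fourier-space estimation of each $\nabla^k$ (multiplying by $|\xi|^{2k}$ and matching the Duhamel tail at each $k$) is more delicate than you indicate: the naive $L^1\to L^2$ bound on $\nabla^k(\text{Duhamel})$ gives $(1+t)^{-2}$ for $k=3$, which is \emph{slower} than the target $(1+t)^{-9/4}$, and you would need the more careful split that moves two derivatives onto the nonlinearity on $[t/2,t]$ (as in \eqref{032001}). The paper sidesteps this entirely by a cleaner device: establish only the $k=0$ lower bound $\|u\|_{L^2}\gtrsim(1+t)^{-3/4}$ and the \emph{upper} bound $\|\Lambda^{-1}u\|_{L^2}\lesssim(1+t)^{-1/4}$, then invoke the interpolation inequality of Lemma \ref{lema5}, $\|u\|_{L^2} \leq C\|\Lambda^{-1}u\|_{L^2}^{k/(k+1)}\|\nabla^k u\|_{L^2}^{1/(k+1)}$, rearranged to yield $\|\nabla^k u\|_{L^2} \geq C\|u\|_{L^2}^{k+1}\|\Lambda^{-1}u\|_{L^2}^{-k}$. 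The same trick with $\|\Lambda^{-1}(u-v)\|_{L^2}\lesssim(1+t)^{-5/4}$ produces the derivative lower bounds for $u-v$. Adopting this interpolation step would simplify your plan considerably and remove the need for a $k$-by-$k$ matching of nonlinear tails.
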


\begin{rem}
Here we comment on the smallness condition on $r_0$. This condition is not required in the previous work by Schonbek \cite{MR0837929} on the temporal decay estimates for the incompressible Navier-Stokes equations. The reason for this difference lies in the behavior of the solutions: the solution of the linear incompressible Navier-Stokes equations resembles the heat kernel in Fourier space, whereas the solution of our model behaves like the heat kernel only when $r_0$ is small. For more details, see Section \ref{Sec3} below.
\end{rem}
 
\begin{rem}
If the initial data satisfies ${\emph{\text{curl}}}~u_0=0$ and $\underset{ |\xi|<r_0}{\inf}|\hat{v}_0(\xi)|\geq \alpha_0$, then the condition \eqref{in-data-optimal} holds, indicating that this assumption is not empty.
\end{rem}

\begin{rem}\label{rem_den}
Since the density of the linearized system decays with an exponential rate, while the density of the nonlinear equations decays with an algebraic rate due to the effect of the incompressible flow, it is not expected to obtain the lower bound of decay rates for density.
\end{rem}

As mentioned in Remark \ref{rem_den}, linear analysis suggests exponential decay of solutions to the equilibrium when there is no coupling with incompressible flow. To further elucidate this observation, we consider the case where the coupling with incompressible flow is absent in \eqref{Main2}. In this case, the coupled system \eqref{Main2} reduces to the damped Euler-Poisson system:
\begin{equation}\label{M2}
	\left\{
	\begin{aligned}
		&\partial_t n+u\cdot\nabla n +\text{div}u=0,\\
		&\partial_tu+u\cdot\nabla u+\nabla n+u=-\nabla U, \\
		&-\Delta U=e^n-1,
	\end{aligned}
	\right.
\end{equation}
with the initial data and far-field states
\begin{equation}\label{i2}
	(n,u)|_{t=0}=(n_0,u_0) \quad \mbox{and} \quad \lim_{|x|\rightarrow +\infty}(n,u)(x,t)=(0,0).
\end{equation}

Then we state our third result on the global-in-time well-posedness and large-time behavior for the damped Euler-Poisson system \eqref{M2}. 
	\begin{thm}\label{Th3}
	Assume that the initial data $(n_0,u_0)\in H^3(\mathbb{R}^3)$ and  $n_0\in \dot{H}^{-1}(\mathbb{R}^3)$ satisfying 
	\begin{equation*}%\label{i3}
		\|(n_0,u_0)\|_{H^3}+\|n_0\|_{\dot{H}^{-1}}\leq \delta_0,
	\end{equation*}
	where  $\delta_0>0$ is a small constant. Then the Cauchy problem \eqref{M2}-\eqref{i2} admits a unique global-in-time classical solution $(n,u) \in L^\infty(\R_+; H^3\cap \dot{H}^{-1}(\R^3)) \times L^\infty(\R_+; H^3(\R^3))$ such that 
	\begin{equation*}%\label{i4}
		\begin{aligned}
		&\|(n, u)(t)\|_{H^3}^2+\|n(t)\|_{\dot{H}^{-1}}^2 +C \int_0^t\left(\| (n, u)(s)\|_{H^3}^2+\|n(s)\|_{\dot{H}^{-1}}^2 \right) ds\leq C\delta_0^2,
		\end{aligned}
	\end{equation*} 
	for any $t\in\mathbb{R}_+$. Moreover, it holds 
	\begin{equation*}
	\|(n, u)(t)\|_{H^3}+\|n(t)\|_{\dot{H}^{-1}} \leq C\delta_0e^{-C_*t},
	\end{equation*}
	where the positive constants $C$ and $C_*$ are independent of time. 
\end{thm}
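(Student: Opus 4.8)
The plan is to prove Theorem \ref{Th3} in two stages: first the global well-posedness with uniform energy bounds, then the exponential decay. For the first stage I would work with the reformulated system \eqref{M2} in the variables $(n,u,U)$ and set up a standard a priori estimate scheme in $H^3$ combined with a $\dot H^{-1}$ estimate on $n$. The local existence of classical solutions follows from a routine fixed-point/iteration argument (or from the general theory for symmetrizable hyperbolic systems), so the crux is the closure of the a priori estimates. Applying $\nabla^k$ for $k=0,1,2,3$ to the first two equations of \eqref{M2}, multiplying by $\nabla^k n$ and $\nabla^k u$ respectively, and using the Poisson equation $-\Delta U = e^n-1$ to handle the $-\nabla U$ term, one obtains $\frac{d}{dt}\big(\|\nabla^k n\|_{L^2}^2+\|\nabla^k u\|_{L^2}^2 + \text{(electric energy)}\big) + \|\nabla^k u\|_{L^2}^2 \lesssim \text{(cubic terms)}$, where the damping term $u$ in the momentum equation produces the good dissipation $\|\nabla^k u\|_{L^2}^2$. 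The electric energy contribution should be controlled using $\|\nabla U\|$-type quantities tied to $n$ through the Poisson equation; since $\int(e^{n_0}-1)\,dx=0$ is not assumed here, one instead keeps $\|n\|_{\dot H^{-1}}$ as part of the energy and notes $\|\nabla U\|_{L^2}\simeq \|e^n-1\|_{\dot H^{-1}}\lesssim \|n\|_{\dot H^{-1}}$ for small $n$.

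The dissipation for $n$ itself is the subtle point: unlike $u$, the density $n$ has no direct parabolic smoothing, and in the pure compressible Euler case it would not decay at all. Here the damping plus the Poisson coupling is what generates $n$-dissipation. I would capture this via an interaction (cross) functional: differentiate a quantity like $\int \nabla^k u \cdot \nabla \nabla^k n\,dx$ in time, use the momentum equation for $\partial_t u$ and the continuity equation for $\partial_t n$, and observe that the term $\nabla n \cdot \nabla^k(\cdots)$ produces $-\|\nabla^{k+1} n\|_{L^2}^2$ while the $-\nabla U$ term produces, via $-\Delta U = e^n-1$, a term $\sim -\|e^n-1\|^2$ controlling $\|n\|_{L^2}^2$ at low order. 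This is exactly the standard "Hodge-type" or Kawashima-Shizuta-style cross term used for damped hyperbolic systems with relaxation. Combining $\epsilon \times (\text{cross functional}) + (\text{energy functional})$ for small $\epsilon$ yields a Lyapunov functional $\mathcal E(t)\simeq \|(n,u)\|_{H^3}^2+\|n\|_{\dot H^{-1}}^2$ satisfying $\frac{d}{dt}\mathcal E + c\big(\|(n,u)\|_{H^3}^2 + \|n\|_{\dot H^{-1}}^2\big) \le C\mathcal E^{1/2}\big(\|(n,u)\|_{H^3}^2\big)$; for $\delta_0$ small a continuity argument closes the bound $\mathcal E(t)\le C\delta_0^2$ for all $t$, giving the stated energy inequality. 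The $\dot H^{-1}$ estimate is obtained by applying $\Lambda^{-1}$ (or testing against $\Lambda^{-2}$) to the continuity equation, using $\text{div}\,u$ on the right and again the Poisson structure; the low-frequency commutator terms $\Lambda^{-1}(u\cdot\nabla n)$ are the routine-but-delicate part there, handled with the Hardy-Littlewood-Sobolev inequality.

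For the exponential decay, the key observation is that the Lyapunov functional $\mathcal E(t)$ itself satisfies a differential inequality of the form $\frac{d}{dt}\mathcal E + c\,\mathcal E \le 0$ once smallness is in force. Indeed, the dissipation rate $\|(n,u)\|_{H^3}^2+\|n\|_{\dot H^{-1}}^2$ already controls the \emph{full} energy $\mathcal E$ — there is no "missing low frequency" as in the whole-space Navier-Stokes or the coupled EP-NS system, precisely because the damping term $u$ in \eqref{M2} and the Poisson term together dissipate \emph{all} of $n$ and $u$ (the electric field supplies a zeroth-order restoring/dissipative contribution to the density that is absent without Poisson coupling). Thus $\frac{d}{dt}\mathcal E + c_*\mathcal E \le C\delta_0 \mathcal E \le \tfrac{c_*}{2}\mathcal E$, and Grönwall gives $\mathcal E(t)\le \mathcal E(0)e^{-\frac{c_*}{2}t}\le C\delta_0^2 e^{-C_* t}$, which is the claimed bound after taking square roots.

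The main obstacle I anticipate is making the density dissipation rigorous in the $\dot H^{-1}$ norm simultaneously with the $H^3$ estimates: one must choose the cross-term coefficients so that the $\dot H^{-1}$ low-frequency part, the $L^2$ part, and the high-order parts all close against a single smallness parameter, and one must verify that the Poisson term genuinely contributes a coercive (not merely bounded) quantity at the level of $\|n\|_{\dot H^{-1}}^2$ and $\|n\|_{L^2}^2$ rather than requiring the neutrality condition. Carefully, $\|\nabla U\|_{L^2}=\|\Lambda^{-1}(e^n-1)\|_{L^2}$ and the cross functional $\int u\cdot\nabla U\,dx$ produces $\|e^n-1\|_{\dot H^{-1}}^2$; linearizing $e^n-1\approx n$ and absorbing the quadratic remainder gives the needed control of $\|n\|_{\dot H^{-1}}^2$, while $\int \nabla u\cdot\nabla n\,dx$ handles $\|\nabla n\|_{L^2}^2$ and higher derivatives bootstrap from there. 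Once these coercivity facts are in hand the rest is bookkeeping of cubic nonlinear terms via Sobolev embedding $H^2(\R^3)\hookrightarrow L^\infty$, which is entirely parallel to the treatment needed for Theorem \ref{thm1}.
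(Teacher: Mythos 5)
Your proposal is correct and follows essentially the same route as the paper's proof: an $H^3$ energy estimate giving dissipation of $u$ from damping, hypocoercivity cross functionals $\int \nabla^k u\cdot\nabla^{k+1} n\,dx$ and $\int u\cdot\nabla(-\Delta)^{-1}n\,dx$ to extract dissipation of $\|n\|_{H^3}$ and $\|n\|_{\dot H^{-1}}$ (via the Poisson term, the latter also producing $\|n\|_{L^2}^2$), combined with small weights into a Lyapunov functional whose dissipation dominates the full energy, so that Gr\"onwall yields exponential decay. The only minor inaccuracy is your remark that the neutrality condition is not assumed — the paper states that \eqref{netura} is assumed throughout — but this does not affect the argument.
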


\begin{rem}
	If the damping term in $\eqref{M2}_2$ is excluded, Guo \cite{MR1637856} constructed the global smooth irrotational flows with small velocity for the electron fluid. Additionally, the $L^\infty$ decay rate of solutions is derived as $(1+t)^{-p}$ for $p\in (1,\frac{3}{2})$. 
\end{rem}

 %%%%%%%%%%%%%%%%%%%%%%%%%%%%%%%%%%%%%%%%%%
%
%
%
%
%
%	
%%%%%%%%%%%%%%%%%%%%%%%%%%%%%%%%%%%%%%%%%%
 \paragraph{Idea of Proof} Now we outline the key ideas employed in the proof of the main theorems. Assuming the neutrality condition \eqref{netura} holds, we first establish global well-posedness using an energy method combined with {\it hypocoercivity-type} estimates. Note that the hypocoercivity-type estimates give the dissipation rate for the density. To investigate the large-time behavior of solutions, as stated in Introduction, we linearize the system \eqref{Main2} and estimate the Green function of that using Hodge decomposition and spectral analysis. This linear analysis reveals that the density decays to a constant state exponentially, while the velocities tend to zero at an algebraic rate. For the nonlinear system, we express the density $n(x,t)$ using Duhamel's principle as:
\begin{align}
    n(t) = S_{11} * n_0 + S_{12} * u_0 
    + \int_0^t S_{11}(t-s) * f_1(s) \,ds + \int_0^t S_{12}(t-s) * f_2(s) \,ds, \label{040601}
\end{align}
where $f_1=-u\cdot\nabla n$, $f_2=-u\cdot\nabla u-\nabla(-\Delta)^{-1}\left(e^n-1-n\right)$ and $(S_{ij})$ are from the Green function for the linearized system, see Section \ref{sssec_green} for details. Notably, the linear part of $n(t)$ decays faster than the nonlinear part, indicating that the decay rate of density is dominated by that of nonlinear term $u\cdot\nabla u$. Since the decay rate of the incompressible flow is the same as that of the heat equation, we infer that $\|u\|_{L^2}$ behaves similarly to the heat equation influenced by the velocity term $v$ in the momentum equations for $u$ in \eqref{Main2}. Building upon this observation, we define a suitable function $M(t)$ given by 
\[%\begin{align}\label{011704}
\begin{aligned}
M(t)&\triangleq \sup _{0 \leq s \leq t} \Big\{\sum_{k=0}^2(1+s)^{\frac{11}{4}+\frac{k}{2}}\left\|\nabla^{k}n(s)\right\|_{L^2}
+(1+s)^{\frac{15}{4}}\|\nabla^3n(s)\|_{L^2}\\
&\hspace{7cm}+\sum_{k=0}^3(1+s)^{\frac{3}{4}+\frac{k}{2}}\left\|\nabla^{k}(u, v)(s)\right\|_{L^2}\Big\}.
\end{aligned}
\]%\end{align}
Our subsequent task is to prove that $M(t)$ is uniformly bounded in time, thereby establishing the desired decay estimates of solutions.

By estimating the Green function and by the definition of $M(t)$, we initially obtain:
\begin{equation*}
    \|\nabla ^k n(t)\|_{L^2} \leq Ce^{-R t} \delta_0 + C(1+t)^{-\frac{11}{4}-\frac{k}{2}} M(t)^2, \quad k=0,1,2,
\end{equation*}
where $\delta_0$ is appeared in Theorem \ref{thm1} and $R>0$ is independent of $t$. 
However, it is evident from \eqref{040601} that it cannot sustain third derivatives, exceeding the setting in the definition of $M(t)$. To address this issue, we introduce a low-high frequency decomposition:
\[
n= n^\ell+ n^h,
\]
where $n^\ell$ denotes the low-frequency part of solutions and $n^h$ represents the high-frequency part. Due to the smoothing effect of low frequencies, we have:
\begin{equation*} 
    \left\|\nabla^3 n^\ell(t)\right\|_2 \leq Ce^{-R t} \delta_0 + C(1+t)^{-\frac{15}{4}} M(t)^2.
\end{equation*}	
For the high-frequency part, we apply energy methods to the high-frequency part of the nonlinear system, yielding:
\begin{equation*}
    \|\nabla^{3}(n^{h},u^{h},v^{h})(t)\|_{L^2}
    \leq C (1+t)^{-\frac{15}{4}}\big(\delta_{0}+M(t)^{\frac{3}{2}}+M(t)^2\big).
\end{equation*}
Combining these results, we obtain:
\begin{align*}
    &\|\nabla ^k n(t)\|_{L^2} \leq  C(1+t)^{-\frac{11}{4}-\frac{k}{2}} (\delta_0+M(t)^2), \quad k=0,1,2, \quad \text{and}\\
     &\|\nabla ^3 n(t)\|_{L^2} \leq C(1+t)^{-\frac{15}{4}}(\delta_0+ M(t)^{\frac{3}{2}}+M(t)^2).
\end{align*}
The velocities $u,v$ are addressed similarly. Combining these estimates with the smallness of $\delta_0$, we establish the uniform bound of $M(t)$, which yields the upper bound decay rates of solutions. Moreover, by analyzing the equations for $u-v$, we obtain the upper bound decay rate of $\|(u-v)(t)\|_{L^2}$, faster than the decay rates of velocities themselves. 
 
In order to show the optimality of the decay rate of solutions, by selecting specific initial data (non-empty), we also obtain the estimates of lower bounds for $\|u(t)\|_{L^2}$ and upper bounds for $\|\Lambda^{-1}u(t)\|_{L^2}$. Then, by Sobolev interpolation inequalities, we establish the optimal decay rates of $u(t)$ for any order derivatives, similar to $\|\nabla^k v(t)\|_{L^2}$ and $\|(u-v)(t)\|_{L^2}$. Thanks to the key observation that the Green function can borrow a derivative due to the opposite signs of the drag force, we overcome the difficulty arising from the non-conservative structure of \eqref{Main1} to obtain the lower bound rate of $\|(u-v)(t)\|_{L^2}$. To elucidate the effect of coupled flow on the Euler-Poisson system, we consider the case where the coupled flow vanishes. We provide that the solution decays to a constant state at an exponential rate.

 %%%%%%%%%%%%%%%%%%%%%%%%%%%%%%%%%%%%%%%%%%
%
%
%
%
%
%	
%%%%%%%%%%%%%%%%%%%%%%%%%%%%%%%%%%%%%%%%%%
\section{Global well-posedness for the EP-NS system}\label{Sec2}	
In this section, we discuss the global-in-time well-posedness for the reformulated EP-NS system \eqref{Main2} providing detailed proof for the existence part of Theorem \ref{thm1}. 

 %%%%%%%%%%%%%%%%%%%%%%%%%%%%%%%%%%%%%%%%%%
%
%
%
%
%
%	
%%%%%%%%%%%%%%%%%%%%%%%%%%%%%%%%%%%%%%%%%%
\subsection{Local well-posedness}
		
We first establish the local-in-time existence theorem of the classical solution of the system \eqref{Main2}, which can be proved similarly to that in \cite{MR4604417} by using the contraction mapping principle. Here, we directly present the main result and omit the details of the proof.
\begin{thm}[Local-in-time well-posedness]\label{lem-loc}
	Assume that the initial data satisfy $(n_0,u_0,v_0)\in H^3(\mathbb{R}^3)$ and ${\rm{div}}v_0=0$, then there exists a short time $T_0>0$ such that the reformulated system \eqref{Main2} admits a unique classical solution $(n,u,v)$ satisfying
	\begin{equation*}
		\begin{aligned}
			&n\in C([0,T_0];H^3(\mathbb{R}^3))\cap C^1([0,T_0];H^2(\mathbb{R}^3)),\\
			&u\in C([0,T_0];H^3(\mathbb{R}^3))\cap C^1([0,T_0];H^2(\mathbb{R}^3)),~\nabla u\in L^2([0,T_0];H^2(\mathbb{R}^3)),\\
			&v\in C([0,T_0];H^3(\mathbb{R}^3))\cap C^1([0,T_0];H^1(\mathbb{R}^3)),~\nabla v\in L^2([0,T_0];H^3(\mathbb{R}^3)).
		\end{aligned}
	\end{equation*}
\end{thm}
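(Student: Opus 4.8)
The plan is to prove the local-in-time well-posedness (Theorem \ref{lem-loc}) via a standard contraction-mapping scheme on the reformulated system \eqref{Main2}, following the argument in \cite{MR4604417}. First I would set up a linearized iteration: given an approximate solution $(n^j, u^j, v^j)$ in a suitable complete metric space
\[
X_{T,R} = \Big\{ (n,u,v) : \|(n,u,v)\|_{L^\infty_T H^3} \le R,\ \text{div}\,v = 0,\ \nabla u, \nabla v \in L^2_T H^2 \Big\},
\]
define the next iterate $(n^{j+1}, u^{j+1}, v^{j+1})$ by solving the linear transport equation $\partial_t n^{j+1} + u^j\cdot\nabla n^{j+1} + \text{div}\,u^j = 0$ for the density, the linear damped symmetric hyperbolic equation $\partial_t u^{j+1} + u^j\cdot\nabla u^{j+1} + \nabla n^{j+1} + u^{j+1} = v^j - \nabla U^{j+1}$ with $U^{j+1} = (-\Delta)^{-1}(e^{n^{j+1}}-1)$ for the Euler velocity, and the linear Stokes-type problem $\partial_t v^{j+1} + v^j\cdot\nabla v^{j+1} + \nabla P^{j+1} = \Delta v^{j+1} + e^{n^{j+1}}(u^{j+1}-v^{j+1})$, $\text{div}\,v^{j+1}=0$ for the Navier-Stokes velocity, with the prescribed initial data.

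Next I would establish uniform-in-$j$ a priori bounds on the iterates. For $n^{j+1}$, apply $\nabla^\alpha$ ($|\alpha|\le 3$) to the transport equation, take the $L^2$ inner product with $\nabla^\alpha n^{j+1}$, commute derivatives past $u^j\cdot\nabla$, and use the commutator/product estimates in $H^3$ together with $\|\text{div}\,u^j\|_{H^3}\le R$ to get $\frac{d}{dt}\|n^{j+1}\|_{H^3}^2 \lesssim \|\nabla u^j\|_{H^3}\|n^{j+1}\|_{H^3}^2 + \|u^j\|_{H^3}\|n^{j+1}\|_{H^3}$. For $u^{j+1}$ the damping term $+u^{j+1}$ provides a favorable $-2\|u^{j+1}\|_{H^3}^2$ contribution; the $\nabla n^{j+1}$ term is handled because $n^{j+1}$ has already been controlled, and the elliptic regularity bound $\|\nabla U^{j+1}\|_{H^3}\lesssim \|e^{n^{j+1}}-1\|_{H^2}\lesssim \|n^{j+1}\|_{H^3}$ (using the neutrality/zero-mean structure and composition estimates for $e^n-1$) closes the forcing. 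For $v^{j+1}$ the standard parabolic energy estimate yields $\frac{d}{dt}\|v^{j+1}\|_{H^3}^2 + c\|\nabla v^{j+1}\|_{H^3}^2 \lesssim (\text{lower order})$, with the pressure eliminated by the Leray projection since $\text{div}\,v^{j+1}=0$. A Gronwall argument then shows that for $R$ chosen depending on the initial data and $T_0=T_0(R)$ small, the ball $X_{T_0,R}$ is invariant under the iteration map.

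Then I would prove contraction in a weaker norm. Writing differences $\delta n = n^{j+1}-n^j$, etc., the difference equations have the same structure but with right-hand sides that are bilinear in the iterates and linear in the differences; energy estimates at the $L^2$ (or $H^1$) level — one order below the uniform bound, to absorb the loss of a derivative coming from terms like $(u^j - u^{j-1})\cdot\nabla n^j$ — give
\[
\frac{d}{dt}\big(\|\delta n\|_{L^2}^2 + \|\delta u\|_{L^2}^2 + \|\delta v\|_{L^2}^2\big) + c\|\nabla\delta v\|_{L^2}^2 \le C(R)\big(\|\delta n\|_{L^2}^2 + \|\delta u\|_{L^2}^2 + \|\delta v\|_{L^2}^2\big) + C(R)\,(\text{previous differences}),
\]
so that for $T_0$ possibly further shrunk the map is a contraction on $X_{T_0,R}$ equipped with the $C([0,T_0];L^2)$ metric. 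Its fixed point is the desired solution; uniqueness follows from the same difference estimate, and the stated time-continuity and parabolic/hyperbolic regularity classes ($n,u \in C^1_T H^2$, $v\in C^1_T H^1$, $\nabla u\in L^2_T H^2$, $\nabla v\in L^2_T H^3$) come from reading off the equations once the solution is known to lie in $L^\infty_T H^3$. The main obstacle is the derivative loss inherent in the hyperbolic-parabolic coupling: the transport and Euler parts do not gain regularity, so one must carefully balance the function space (full $H^3$ for boundedness, $L^2$ for contraction) and exploit the damping term $+u$ in the $u$-equation to control the otherwise problematic coupling between $\nabla n$ and $\text{div}\,u$; the exponential composition $e^n$ in both the Poisson source and the drag coefficient requires composition estimates that are routine but must be tracked to keep the nonlinearity in $H^3$.
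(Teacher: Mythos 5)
Your contraction-mapping scheme is exactly the route the paper takes: the authors state that this theorem ``can be proved similarly to that in \cite{MR4604417} by using the contraction mapping principle'' and omit all details, and your outline (linearized iteration, uniform $H^3$ bounds exploiting the transport structure for $n$, the damping in the $u$-equation and parabolic smoothing for $v$, then contraction one derivative lower to absorb the hyperbolic derivative loss) is the standard instantiation of that cited argument. The one step worth double-checking is your bound $\|\nabla U^{j+1}\|_{H^3}\lesssim \|n^{j+1}\|_{H^3}$: since $H^3(\mathbb{R}^3)\not\hookrightarrow L^1$, having $n^{j+1}\in H^3$ alone does not control the low-frequency part $\nabla(-\Delta)^{-1}n^{j+1}$ in $L^2$, so one should either carry an extra $\dot H^{-1}$ norm of $n$ along the iteration (as the paper does in its global energy estimates) or arrange the energy functional so that only $\nabla^2 U$ and higher derivatives appear.
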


\subsection{A priori estimates}\label{ssec_apriori}

To extend the local solution to a global one, the crucial step is to obtain uniform estimates. For later use, we introduce the following communicator estimates.
\begin{lem}{\rm (\cite[Lemma A.3]{MR2917409})}\label{lema2}
	Let $m\geq 1$ be an integer and define the communicator 
	\begin{equation*}
		[\nabla^m,f]g=\nabla^{m}(fg)-f\nabla^m g.
	\end{equation*}
	Then we have
	\begin{equation*}
		\big\|[\nabla^m,f]g\big\|_{L^p}\lesssim \|\nabla f\|_{L^{p_1}}\|\nabla^{m-1}g\|_{L^{p_2}}+
		\|\nabla^m f\|_{L^{p_3}}\|g\|_{L^{p_4}},
	\end{equation*}
	where $p, p_2, p_3\in(1,+\infty)$ and 
	\begin{equation*}
		\frac{1}{p}=\frac{1}{p_1}+\frac{1}{p_2}=\frac{1}{p_3}+\frac{1}{p_4}.
	\end{equation*}
\end{lem}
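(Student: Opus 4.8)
\textbf{Proof proposal for Lemma \ref{lema2}.} Wait — this lemma is quoted verbatim from \cite[Lemma A.3]{MR2917409}, so strictly there is nothing to prove; but let me sketch how I would prove the commutator estimate itself, since that is the natural object to plan around. The statement is a Kato--Ponce / Coifman--Meyer type estimate: writing $[\nabla^m,f]g=\nabla^m(fg)-f\nabla^m g$, one expands $\nabla^m(fg)$ by the Leibniz rule and observes that the top-order term $f\nabla^m g$ cancels, leaving $\sum_{1\le j\le m}\binom{m}{j}\nabla^j f\,\nabla^{m-j}g$. The plan is therefore to estimate each piece $\|\nabla^j f\,\nabla^{m-j}g\|_{L^p}$ for $1\le j\le m$ and show it is dominated by the right-hand side.

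First I would dispose of the extreme cases by H\"older: for $j=1$ we get directly $\|\nabla f\|_{L^{p_1}}\|\nabla^{m-1}g\|_{L^{p_2}}$, and for $j=m$ we get $\|\nabla^m f\|_{L^{p_3}}\|g\|_{L^{p_4}}$, which are exactly the two terms appearing in the claimed bound. The main work is the intermediate range $2\le j\le m-1$ (nonempty only when $m\ge 3$). For those I would interpolate: by the Gagliardo--Nirenberg inequality one can write, for suitable exponents, $\|\nabla^j f\|_{L^{q_1}}\lesssim \|\nabla f\|_{L^{p_1}}^{1-\theta}\|\nabla^m f\|_{L^{p_3}}^{\theta}$ and symmetrically $\|\nabla^{m-j}g\|_{L^{q_2}}\lesssim \|\nabla^{m-1}g\|_{L^{p_2}}^{1-\theta}\|g\|_{L^{p_4}}^{\theta}$ with the same $\theta=(j-1)/(m-1)$, choosing $q_1,q_2$ so that $1/q_1+1/q_2=1/p$ — this is consistent precisely because the scaling relations $1/p=1/p_1+1/p_2=1/p_3+1/p_4$ are assumed. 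Then H\"older in $L^p$ followed by Young's inequality $a^{1-\theta}b^\theta\lesssim a+b$ absorbs the intermediate term into the sum of the two endpoint terms.

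The step I expect to be the genuine obstacle is verifying that the Gagliardo--Nirenberg exponents can be chosen \emph{simultaneously admissible} for $f$ and $g$ with a common $\theta$ and with $1/q_1+1/q_2=1/p$; this requires checking the dimensional constraints (in $\mathbb{R}^3$) $\tfrac1{q_1}=\tfrac{1-\theta}{p_1}+\theta\big(\tfrac1{p_3}-\tfrac{m-1}{3}\big)+\tfrac{j-1}{3}$ and the analogous identity for $q_2$, and confirming their sum collapses to $1/p$ using the two hypothesized relations together with $\theta(m-1)=j-1$. Since $p,p_2,p_3\in(1,\infty)$ (while $p_1,p_4$ may be in $[1,\infty]$), the Sobolev--Gagliardo--Nirenberg inequalities apply in the needed form, and one also uses the standard density argument to reduce to Schwartz functions so that all manipulations are legitimate. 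Once the exponent bookkeeping is done, the estimate follows by assembling the finitely many Leibniz terms. Given that this is cited from \cite{MR2917409}, in the paper itself I would simply invoke it without reproducing this argument.
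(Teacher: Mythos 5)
The paper offers no proof of this lemma; it is quoted directly from \cite[Lemma A.3]{MR2917409}, so there is nothing to compare against. Your sketch is the standard (and correct) argument: Leibniz expansion with cancellation of the $j=0$ term, H\"older for the endpoint terms $j=1$ and $j=m$, and Gagliardo--Nirenberg interpolation with the common exponent $\theta=(j-1)/(m-1)$ for $2\le j\le m-1$ followed by Young's inequality. The exponent bookkeeping you flag as the potential obstacle does close: summing the two Gagliardo--Nirenberg relations gives $\tfrac{1}{q_1}+\tfrac{1}{q_2}=\tfrac{m}{n}+(1-\theta)\bigl(\tfrac1p-\tfrac{m}{n}\bigr)+\theta\bigl(\tfrac1p-\tfrac{m}{n}\bigr)=\tfrac1p$, using exactly the two hypothesized scaling identities, and $\theta<1$ strictly so no endpoint case of Gagliardo--Nirenberg is needed. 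Your concluding remark is also the right call: in the paper one simply invokes the cited reference.
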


We then define energy $\mathcal{E}$ and corresponding dissipation rate $\mathcal{D}$ as
\begin{equation}\label{EDdefinition}
	\mathcal{E}(t)=\|(n, u, v)(t)\|_{H^3}+\|n(t)\|_{\dot{H}^{-1}} \quad \mbox{and} \quad \mathcal{D}(t)=\|n(t)\|_{H^3}+\|\nabla v(t)\|_{H^3}+\|(u-v)(t)\|_{H^3},
\end{equation}
respectively.  To close the nonlinear estimates, we provide the \emph{a priori} assumption that 
\begin{equation}\label{030802}
\mathcal{E}(t)=\|(n, v, v)(t)\|_{H^3}+\|n(t)\|_{\dot{H}^{-1}} \leq \delta,
\end{equation}
 with $\delta$ a sufficiently small positive constant.				
	\begin{prop}\label{p1}
Let $T>0$ and $(n,u,v)$ be the classical solution of system \eqref{Main2} satisfying the a priori assumption \eqref{030802}. Then, for $0<t\leq T$, it holds that
$$
\frac{d}{d t}\left\{\|(n, u, v)\|_{L^2}^2+\left\|\nabla  U\right\|_{L^2}^2\right\}+C_0\left(\|u-v\|_{L^2}^2+\|\nabla v\|_{L^2}^2\right) \leq 0,
$$
where $C_0$ is a positive constant independent of time.
\end{prop}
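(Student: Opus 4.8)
The plan is to perform a standard $L^2$ energy estimate on the reformulated system \eqref{Main2}, treating the nonlinear convective terms as small perturbations controlled by the a priori assumption \eqref{030802}. First I would multiply the continuity equation $\partial_t n + u\cdot\nabla n + \mathrm{div}\,u = 0$ by $n$, the velocity equation $\partial_t u + u\cdot\nabla u + \nabla n + u - v = -\nabla U$ by $u$, and the Navier--Stokes equation $\partial_t v + v\cdot\nabla v + \nabla P = \Delta v + e^n(u-v)$ by $v$, then integrate over $\mathbb{R}^3$ and add. Integration by parts on the pressure term uses $\mathrm{div}\,v = 0$; the term $\Delta v \cdot v$ yields $-\|\nabla v\|_{L^2}^2$; and the cross terms $\int \nabla n\cdot u$ and $\int n\,\mathrm{div}\,u$ cancel after integration by parts.

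The key structural point is the handling of the electric field and drag force. For the potential term, I would multiply $-\Delta U = e^n - 1$ by $\partial_t U$; noting that $\partial_t(-\Delta U) = \partial_t(e^n) = e^n\partial_t n$, and using the continuity equation $\partial_t n = -u\cdot\nabla n - \mathrm{div}\,u$, one can relate $\int \nabla U\cdot\nabla\partial_t U\,dx = \tfrac12\tfrac{d}{dt}\|\nabla U\|_{L^2}^2$ to the term $-\int e^n(u\cdot\nabla n + \mathrm{div}\,u)U\,dx$; combined with the term $\int \rho\nabla U\cdot u = \int e^n \nabla U\cdot u$ coming from the velocity equation (recall $\nabla U$ appears as $-\nabla U$ on the right of $\eqref{Main2}_2$, contributing $-\int \nabla U\cdot u$ when tested against $u$), the leading-order $e^n\approx 1$ pieces cancel and the remainder is cubic/quartic in the small quantities. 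For the drag terms, testing $-(u-v)$ against $u$ and $+e^n(u-v)$ against $v$ produces $-\int |u-v|^2 + \int(e^n-1)(u-v)\cdot v$, where the first is the good damping term $-\|u-v\|_{L^2}^2$ and the second is cubic.

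After collecting everything, I expect an identity of the schematic form
\[
\frac12\frac{d}{dt}\Big\{\|(n,u,v)\|_{L^2}^2 + \|\nabla U\|_{L^2}^2\Big\} + \|u-v\|_{L^2}^2 + \|\nabla v\|_{L^2}^2 = \mathcal{R},
\]
where $\mathcal{R}$ collects all remaining terms: the convective contributions $\int u\cdot\nabla n\, n$, $\int (u\cdot\nabla u)\cdot u$, $\int(v\cdot\nabla v)\cdot v$ (the last two vanish or are cubic after integration by parts), and the $O(e^n-1)$ remainders from the Poisson and drag couplings. Each term in $\mathcal{R}$ is cubic in $(n,u,v,\nabla v, u-v)$ and can be bounded, using Hölder, Sobolev embedding $H^3\hookrightarrow L^\infty\cap W^{1,3}$, and the a priori smallness \eqref{030802}, by $C\delta(\|u-v\|_{L^2}^2 + \|\nabla v\|_{L^2}^2 + \|n\|_{L^2}^2)$ — here one must be slightly careful that the only dissipation available at this zeroth-order level is $\|u-v\|_{L^2}^2 + \|\nabla v\|_{L^2}^2$, so terms like $\int (u\cdot\nabla n)n$ must be redistributed (e.g. integrating by parts to move the derivative off $n$) and absorbed using $\|u\|_{L^2}\lesssim \|u-v\|_{L^2} + \|v\|_{L^2}$ together with the Poincaré-free bound $\|v\|_{L^6}\lesssim\|\nabla v\|_{L^2}$. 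Choosing $\delta$ small enough absorbs $\mathcal{R}$ into the left-hand side, leaving $C_0(\|u-v\|_{L^2}^2 + \|\nabla v\|_{L^2}^2)$ as the net dissipation. The main obstacle is precisely this absorption: ensuring that every nonlinear remainder — especially those involving $\|n\|_{L^2}$ or $\|u\|_{L^2}$, which are not directly dissipated at this order — is genuinely higher order and can be controlled by the available damping $\|u-v\|_{L^2}$ and the gradient dissipation $\|\nabla v\|_{L^2}$ via the elementary decomposition $u = (u-v) + v$ and Sobolev inequalities, without needing higher-order energy at this stage.
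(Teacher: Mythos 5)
Your proposal takes a genuinely different route from the paper, and unfortunately that route has a gap that is hard to close at this level of the estimate.

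The paper does \emph{not} work with the log-reformulated system $\eqref{Main2}$ for this proposition. Instead it computes the exact energy--entropy balance for the original conservative system $\eqref{Main1}$: multiplying the continuity equation by $\log\rho$, the momentum equation (in conservative form) by $u$, and the Navier--Stokes equation by $v$, together with the exact Poisson cancellation $\int\rho\,u\cdot\nabla U\,dx = \frac12\frac{d}{dt}\|\nabla U\|_{L^2}^2$, one obtains the \emph{identity}
\[
\frac{d}{dt}\int_{\mathbb{R}^3}\Big(\tfrac12\rho|u|^2+\rho\log\rho-\rho+1+\tfrac12|v|^2+\tfrac12|\nabla U|^2\Big)\,dx
+\int_{\mathbb{R}^3}\rho|u-v|^2\,dx+\|\nabla v\|_{L^2}^2=0,
\]
with \emph{no} nonlinear remainder whatsoever. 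The a priori smallness is then used only to assert $\rho\simeq 1$, $\rho\log\rho-\rho+1\simeq\tfrac12 n^2$, $\tfrac12\rho|u|^2\simeq\tfrac12|u|^2$, and $\int\rho|u-v|^2\simeq\|u-v\|_{L^2}^2$, which converts the exact identity into the stated (Lyapunov-equivalent) inequality.

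By contrast, when you test the reformulated equations directly against $(n,u,v)$, you lose this exactness and generate cubic remainders that the available dissipation cannot absorb. Concretely, $\int(u\cdot\nabla n)\,n\,dx=-\tfrac12\int(\mathrm{div}\,u)\,n^2\,dx$, and even using $\mathrm{div}\,u=\mathrm{div}(u-v)$ and integrating by parts, you end up with something like $\int(u-v)\cdot(n\nabla n)\,dx\lesssim\|u-v\|_{L^2}\|n\|_{L^\infty}\|\nabla n\|_{L^2}\lesssim\delta^2\|u-v\|_{L^2}$. Young's inequality then leaves an additive constant $C\delta^4$, not a term that can be absorbed into $C_0(\|u-v\|_{L^2}^2+\|\nabla v\|_{L^2}^2)$, because $\|n\|_{L^2}$, $\|\nabla n\|_{L^2}$, and $\|\nabla u\|_{L^2}$ are simply not dissipated at this zeroth-order stage (the density dissipation only appears later, in the hypocoercivity correction of Proposition \ref{p3}). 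The same obstruction hits $\int(u\cdot\nabla u)\cdot u\,dx$ and, more seriously, the Poisson mismatch: in $\eqref{Main2}$ the velocity equation carries $-\nabla U$ rather than $-\rho\nabla U$, so the exact cancellation $\int\rho\,u\cdot\nabla U=\frac12\frac{d}{dt}\|\nabla U\|_{L^2}^2$ is broken by a term $\int(1-e^n)\,u\cdot\nabla U\,dx$ which has no sign. Your plan acknowledges all of this as ``the main obstacle'' but does not resolve it; the decomposition $u=(u-v)+v$ with $\|v\|_{L^6}\lesssim\|\nabla v\|_{L^2}$ does not help here, since the problematic factors are $\|n\|_{L^2}$, $\|\nabla n\|_{L^2}$, not $\|v\|_{L^6}$. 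The clean fix is precisely the paper's move: derive the exact cancellation at the level of the conservative system $\eqref{Main1}$ first, and invoke smallness only to compare the resulting Lyapunov functional with $\|(n,u,v)\|_{L^2}^2+\|\nabla U\|_{L^2}^2$.
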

\begin{proof}
	Taking basic $L^2$ energy estimates on the system \eqref{Main1} gives
	$$
		\frac{d}{d t}\left\{\int_{\mathbb{R}^3}\Big(\frac{1}{2} \rho |u|^2+\rho \log \rho -\rho+1+\frac{1}{2}|v|^2+\frac{1}{2}|\nabla_x U|^2 \Big)\,dx\right\}+\int_{\mathbb{R}^3} \rho|u-v|^2 \,dx+\|\nabla v\|_{L^2}^2=0.
$$
Using \eqref{030802} and $n=\log \rho$, we obtain that for $\delta > 0$ small enough
	\begin{align*}
\rho\leq e^{\| n \|_{L^\infty}} \leq e^{C\|\nabla n\|_{H^1}} \leq e^{C\delta} \leq \frac{5}{4},\quad \rho\geq e^{-\|n\|_{L^\infty} } \geq e^{-C\|\nabla n\|_{H^1}} \geq e^{-C\delta} \geq \frac{4}{5},
	\end{align*}
	and
	$$
	\rho \log \rho-\rho+1 \simeq \frac{1}{2}(\rho-1)^2\simeq \frac{1}{2}n^2.
	$$
	There exists a positive constant $C_0$ such that 
	\begin{equation*}
		\frac{d}{d t}\left\{\|(n, u, v)\|_{L^2}^2+\left\|\nabla  U\right\|_{L^2}^2\right\}+C_0\left(\|u-v\|_{L^2}^2+\|\nabla v\|_{L^2}^2\right) \leq 0.
	\end{equation*}	
	This completes the proof.
\end{proof}

For the estimates of high-order derivatives, we rewrite the system \eqref{Main2} as
\begin{equation}\label{main4}
	\left\{
	\begin{aligned}
		&\partial_t n +{\rm{div}}\,u=f_1, \\
		&\partial_tu+\nabla n+u-v+\nabla (-\Delta )^{-1}n=f_2,\\
		&\partial_tv+v-\mathbb{P}u-\Delta v=f_3,
	\end{aligned}
	\right.
\end{equation}
where $\mathbb{P}$ is the Leary operator $\mathbb{P}= I+\nabla(-\Delta)^{-1}\text{div}$, and the nonlinear terms $f_1$, $f_2$, and $f_3$ are given by
\begin{equation}\label{nonlinearterms}
	\begin{aligned}
		& f_1=-u\cdot\nabla n,\\
		&f_2=-u\cdot\nabla u-\nabla(-\Delta)^{-1}\left(e^n-1-n\right), \quad \mbox{and}\\
		&f_3=-\mathbb{P}(v\cdot\nabla v)+\mathbb{P}((e^{n}-1)(u-v)),
	\end{aligned}
 \end{equation}
 respectively.

\begin{prop}\label{p2}
Let $T>0$ and $(n,u,v)$ be the classical solution of system \eqref{Main2} satisfying the a priori assumption \eqref{030802}. Then, for $0<t\leq T$ and  $k=0,1,2$, it holds that
\begin{equation*}
	\begin{aligned}
		& \frac{1}{2} \frac{d}{d t}\left(\left\|\nabla^k n\right\|_{H^1}^2+\left\|\nabla^{k+1} u\right\|_{L^2}^2+\left\|\nabla^{k+1} v\right\|_{L^2}^2\right)+\left\|\nabla^{k+2} v\right\|_{L^2}^2+\left\|\nabla^{k+1}(u-v)\right\|_{L^2}^2  \leq C \mathcal{E}(t) \mathcal{D}^2(t),
	\end{aligned}
\end{equation*}
where the positive constant $C$ is independent of time.
	\end{prop}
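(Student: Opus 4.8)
The plan is to carry out high-order energy estimates on the reformulated linear-principal-part system \eqref{main4}, treating $f_1,f_2,f_3$ as perturbative source terms, and then combine a direct energy identity with a cross (hypocoercivity) term that extracts the dissipation of $\nabla n$. First I would apply $\nabla^k$ (for $k=0,1,2$) to each equation in \eqref{main4}, multiply the $n$-equation by $\nabla^k n$, the $u$-equation by $\nabla^k u$, and the $v$-equation by $\nabla^k v$, integrate over $\R^3$, and sum. The key algebraic cancellations are: the $\mathrm{div}\,u$ term in the first equation against $\nabla n$ in the second (integration by parts), and the linear coupling $-v\cdot\nabla^k u$ against $-\mathbb{P}u\cdot\nabla^k v$ — note $\int \nabla^k u\cdot\nabla^k v\,dx = \int \nabla^k u\cdot\nabla^k \mathbb{P}v\,dx$ since $\mathrm{div}\,v=0$, so $\mathbb{P}v=v$, and the two cross terms cancel. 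The damping term $u-v$ in the second equation paired with $\nabla^k u$ and the $v$ in the third paired with $\nabla^k v$ together assemble (after completing the square with the cross terms) into a nonnegative multiple of $\|\nabla^k(u-v)\|_{L^2}^2$ plus $\|\nabla^k v\|_{L^2}^2$; the Laplacian in the $v$-equation gives $\|\nabla^{k+1}v\|_{L^2}^2$; and the Poisson term $\nabla(-\Delta)^{-1}n$ paired with $\nabla^k u$ combines with $\mathrm{div}\,u$ from the first equation times $(-\Delta)^{-1}\nabla^k n$-type manipulation to produce the time derivative of $\|\nabla^{k-1}n\|$-type (i.e.\ $\|\nabla^k U\|$) terms — this is why the statement carries $\|\nabla^k n\|_{H^1}^2$ and the extra $\nabla^{k+1}u$, $\nabla^{k+1}v$: one actually works at level $\nabla^{k+1}$ for the velocities. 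The output of this step is control of $\frac{d}{dt}(\cdots)$ plus $\|\nabla^{k+2}v\|^2 + \|\nabla^{k+1}(u-v)\|^2$, but with no dissipation yet for $n$ itself.

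Second, I would run the hypocoercivity-type step to recover the missing $\|\nabla^k n\|_{H^1}^2$ dissipation. The idea is to test the $u$-equation (at level $\nabla^{k+1}$) with $\nabla^{k}\nabla n$ (equivalently, pair $\partial_t u + \nabla n + u - v + \nabla(-\Delta)^{-1}n = f_2$ with a spatial derivative of $n$), which produces $+\|\nabla^{k+1}n\|_{L^2}^2$ from the $\nabla n$ term and a full-rank contribution $+\|\nabla^k n\|_{L^2}^2$ from the $\nabla(-\Delta)^{-1}n$ term after noting $\int \nabla(-\Delta)^{-1}\nabla^k n \cdot \nabla^k\nabla n\,dx \simeq \|\nabla^k n\|_{L^2}^2$. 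The cost is a term $\frac{d}{dt}\int \nabla^{k+1}u\cdot\nabla^k\nabla n$ (which is absorbed into the energy functional after multiplying this cross estimate by a small constant $\eta$), and lower-order products $\|\nabla^{k+1}u\|^2$, $\|\nabla^{k+1}(u-v)\|^2$, $\|\partial_t \nabla^k n\|\cdot\|\nabla^{k+1}u\|$ which one rewrites using the first equation $\partial_t n = -\mathrm{div}\,u + f_1$; all of these are dominated by the already-available dissipation terms $\|\nabla^{k+1}(u-v)\|^2$, $\|\nabla^{k+2}v\|^2$ and by $\|\nabla^{k+1}u\|^2 \lesssim \|\nabla^{k+1}(u-v)\|^2 + \|\nabla^{k+1}v\|^2$ after choosing $\eta$ small. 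Adding $\eta$ times this to the energy identity of the first step closes the left-hand side into the form stated in the proposition.

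Third, I would bound the nonlinear remainder. Each source contributes, after integration by parts, a commutator-type or product-type term: e.g.\ $\int \nabla^{k+1}(u\cdot\nabla n)\cdot\nabla^{k+1}n\,dx$ is handled by writing $\nabla^{k+1}(u\cdot\nabla n) = u\cdot\nabla \nabla^{k+1}n + [\nabla^{k+1},u]\nabla n$ and using Lemma \ref{lema2} together with Sobolev embeddings $H^2(\R^3)\hookrightarrow L^\infty$; the integral of $u\cdot\nabla\nabla^{k+1}n$ against $\nabla^{k+1}n$ is $-\frac12\int(\mathrm{div}\,u)|\nabla^{k+1}n|^2 \lesssim \|\nabla u\|_{L^\infty}\|\nabla^{k+1}n\|_{L^2}^2 \lesssim \mathcal{E}\mathcal{D}^2$. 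The Poisson nonlinearity $\nabla(-\Delta)^{-1}(e^n-1-n)$ is quadratic in $n$ with $(-\Delta)^{-1}$ a smoothing operator, so $\|\nabla^{k+1}\nabla(-\Delta)^{-1}(e^n-1-n)\|_{L^2}\lesssim \|\nabla^k(e^n-1-n)\|_{L^2}\lesssim \|n\|_{L^\infty}\|\nabla^k n\|_{L^2}\lesssim \mathcal{E}\mathcal{D}$ (using that $\|n\|_{H^3}$ controls $\|n\|_{L^\infty}$ and that $e^n-1-n = O(n^2)$ near $n=0$ under the smallness \eqref{030802}). The terms in $f_3$, namely $\mathbb{P}(v\cdot\nabla v)$ and $\mathbb{P}((e^n-1)(u-v))$, are handled analogously — $\mathbb{P}$ is bounded on $H^s$, $v\cdot\nabla v$ contributes $\|\nabla v\|_{L^\infty}\|\nabla^{k+1}v\|\lesssim\mathcal{E}\mathcal{D}^2$, and $(e^n-1)(u-v)$ gives $\|n\|_{L^\infty}\|u-v\|_{H^{k+1}}$ type bounds, again $\lesssim\mathcal{E}\mathcal{D}^2$. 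Collecting everything yields the claimed bound $C\mathcal{E}(t)\mathcal{D}^2(t)$.

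The main obstacle is the bookkeeping in step two: ensuring that the hypocoercivity cross term genuinely produces \emph{both} $\|\nabla^{k+1}n\|_{L^2}^2$ and $\|\nabla^k n\|_{L^2}^2$ (so that summing gives $\|\nabla^k n\|_{H^1}^2$), and that every error term it generates — in particular the ones involving $\partial_t n$ and the velocity at level $\nabla^{k+1}$ — can be absorbed by the small parameter $\eta$ into the genuinely dissipative quantities $\|\nabla^{k+2}v\|_{L^2}^2$ and $\|\nabla^{k+1}(u-v)\|_{L^2}^2$ without leaving an uncontrolled $\|\nabla^{k+1}v\|_{L^2}^2$ on the right. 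Since the left-hand side of the proposition contains no $\|\nabla^{k+1}v\|_{L^2}^2$ dissipation, one must be careful to only use $\|\nabla^{k+1}v\|_{L^2}^2 \le \|\nabla^{k+1}(u-v)\|_{L^2}^2 + \|\nabla^{k+1}u\|_{L^2}^2$-type splittings in a way that the resulting $\|\nabla^{k+1}u\|$ pieces are themselves small-constant multiples absorbed by $\|\nabla^{k+1}(u-v)\|^2+\|\nabla^{k+2}v\|^2$ (via $\|\nabla^{k+1}u\| \le \|\nabla^{k+1}(u-v)\| + \|\nabla^{k+1}v\|$ and Poincaré-free interpolation $\|\nabla^{k+1}v\|^2 \lesssim \|\nabla^{k} v\|\,\|\nabla^{k+2}v\|$, the latter controlled by Young's inequality). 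This delicate absorption, tuned by the order in which constants are chosen, is where the proof requires the most attention; the nonlinear estimates of step three are routine by comparison.
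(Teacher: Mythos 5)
Step one of your plan is the paper's approach, and you have correctly identified the crucial mechanism: after applying $\nabla^{k+1}$ to each equation and testing, the Poisson term $\nabla(-\Delta)^{-1}n$ paired with $\nabla^{k+1}u$ is integrated by parts and combined with $\operatorname{div}u = -\partial_t n + f_1$ to produce $-\tfrac12\tfrac{d}{dt}\|\nabla^k n\|_{L^2}^2$, which is what upgrades $\|\nabla^{k+1}n\|_{L^2}^2$ in the time derivative to $\|\nabla^k n\|_{H^1}^2$. The nonlinear estimates in step three are also the same as the paper's. Two small imprecisions: the coupling terms $-v$ (in the $u$-equation) and $-\mathbb{P}u$ (in the $v$-equation) do not \emph{cancel} when tested — they both contribute $-\int\nabla^{k+1}u\cdot\nabla^{k+1}v$, so together with $\|\nabla^{k+1}u\|_{L^2}^2 + \|\nabla^{k+1}v\|_{L^2}^2$ from the damping they assemble into exactly $\|\nabla^{k+1}(u-v)\|_{L^2}^2$, not "$\|\nabla^{k+1}(u-v)\|^2$ plus $\|\nabla^{k+1}v\|^2$".

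The substantive issue is step two. Read the statement of Proposition \ref{p2} again: the only dissipation outside the time derivative is $\|\nabla^{k+2}v\|_{L^2}^2 + \|\nabla^{k+1}(u-v)\|_{L^2}^2$; the quantity $\|\nabla^k n\|_{H^1}^2$ appears \emph{inside} the time derivative, not as a dissipation term. There is no $n$-dissipation to recover here, so the hypocoercivity cross-term estimate you insert is not part of the proof of this proposition at all — it is the content of the separate Proposition \ref{p3}. If you were actually to add $\eta\int\nabla^k u\cdot\nabla^{k+1}n$ to the energy functional as you propose, the left-hand side would acquire both the cross term inside the time derivative and $\|\nabla^k n\|_{H^1}^2$ outside it, giving a statement that no longer matches Proposition \ref{p2}. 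Your closing worry — that error terms like $\|\nabla^{k+1}u\|_{L^2}^2$ from the cross estimate cannot be absorbed by $\|\nabla^{k+2}v\|^2 + \|\nabla^{k+1}(u-v)\|^2$ — is a legitimate difficulty, but precisely because of it the paper does \emph{not} perform that absorption inside Proposition \ref{p2}. Instead, Propositions \ref{p1}, \ref{p2}, and \ref{p3} are all established first with the troublesome velocity terms left on the right-hand side of \eqref{030812}, and only in the proof of Theorem \ref{thm1} (equation \eqref{030814}) is a single small constant $\beta_1$ used to combine them, at which point $\|\nabla u\|_{H^2}^2 \lesssim \|\nabla v\|_{H^3}^2 + \|u-v\|_{H^3}^2$ is available from the full sum over $k$. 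Your step one alone, correctly executed, proves the proposition; step two should be removed.
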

\begin{proof}
	Rewrite the equation $\eqref{Main2}_4$ as 
\[%	\begin{equation}\label{032301}
		\partial_tv+v\cdot\nabla v+\nabla P-u+v=\Delta v+(\rho-1)(u-v).
\]%		\end{equation}
This together with \eqref{main4} yields		
		%Taking $\int_{\mathbb{R}^3} (\nabla\eqref{main4}_1\cdot \nabla n+\nabla\eqref{main4}_2\cdot \nabla u+\nabla\eqref{032301}\cdot \nabla v)\,dx$, we have
	\begin{equation}\label{030805}
		\begin{aligned}
			& \frac{1}{2} \frac{d}{d t}\left(\|\nabla n\|_{L^2}^2+\|\nabla u\|_{L^2}^2+\|\nabla v\|_{L^2}^2\right)+\left\|\nabla^2 v\right\|_{L^2}^2+\|\nabla(u-v)\|_{L^2}^2\\
			&\quad =-\int_{\mathbb{R}^3} \nabla \nabla(-\Delta)^{-1} n \cdot  \nabla u \,dx+\int_{\mathbb{R}^3} \nabla f_1 \cdot \nabla n \,dx+\int_{\mathbb{R}^3}\nabla f_2 \cdot \nabla u \,dx\\
			&\qquad -\int_{\mathbb{R}^3} \nabla(v \cdot \nabla v) \cdot \nabla v \,dx+\int_{\mathbb{R}^3} \nabla((\rho-1)(u-v)) \cdot \nabla v \,dx.
		\end{aligned}
	\end{equation}
	In terms of integration by parts and the equation $\eqref{main4}_1$, we have
	\begin{equation}\label{030803}
	\begin{aligned}
		 -\int_{\mathbb{R}^3} \nabla \nabla(-\Delta)^{-1} n \cdot \nabla u \,dx 
		&=  -\int_{\mathbb{R}^3}(-\Delta)^{-1} n \cdot \Delta \text{div} u \,dx\\
	&=-\int_{\mathbb{R}^3}(-\Delta)^{-1} n \cdot \Delta\left(-\partial_tn+f_1\right) \,dx \\
			& =\int_{\mathbb{R}^3}(-\Delta)^{-1} n \cdot \Delta \partial_tn \,dx-\int_{\mathbb{R}^3}(-\Delta)^{-1} n \cdot \Delta f_1 \,dx \\
			& =-\int_{\mathbb{R}^3} n \cdot \partial_tn\,dx-\int_{\mathbb{R}^3}(-\Delta)^{-1} n \cdot \Delta f_1 \,dx \\
			& =-\frac{1}{2} \frac{d}{d t}\|n\|_{L^2}^2-\int_{\mathbb{R}^3}(-\Delta)^{-1} n \cdot \Delta f_1 \,dx.
		\end{aligned}
	\end{equation}
	The second term on the right-hand side of \eqref{030803} can be estimated as
	\begin{equation}\label{030804}
		\begin{aligned}
			\left|\int_{\mathbb{R}^3}(-\Delta)^{-1} n \cdot \Delta f_1 \,dx\right| & \leq\left|\int_{\mathbb{R}^3} \nabla(-\Delta)^{-1} n \cdot \nabla f_1 \,dx\right| \\
			& \leq\left|\int_{\mathbb{R}^3} \nabla(-\Delta)^{-1} n \cdot \nabla(u \cdot \nabla n) \,dx\right| \\
			& \leq\left|\int_{\mathbb{R}^3} n u \cdot \nabla n \,dx\right| \\
			& \leq\|n\|_{L^2}\|u\|_{L^\infty}\|\nabla n\|_{L^2} \\
			& \leq C\|n\|_{H^{1}}^2\|\nabla u\|_{H^1} \\
			& \leq C\mathcal{E}(t)\mathcal{D}^2(t).
		\end{aligned}
	\end{equation}
	Combining \eqref{030803} and \eqref{030804} together gives 
	\begin{equation*}\label{030806}
		-\int_{\mathbb{R}^3} \nabla \nabla(-\Delta)^{-1} n \cdot \nabla u \,dx \leq-\frac{1}{2} \frac{d}{d t}\| n \|_{L^2}^2+C \mathcal{E}(t)\mathcal{D}^2(t).
	\end{equation*}
For the second term on the right-hand side of \eqref{030805}, we get
	\begin{equation*}
		\begin{aligned}
			 \left|\int_{\mathbb{R}^3} \nabla f_1 \cdot \nabla n \,dx\right| &\leq \left| \int_{\mathbb{R}^3} \nabla(u \cdot \nabla n) \cdot \nabla n \,dx\right| \\
			& \leq\left|\int_{\mathbb{R}^3}(\nabla(u \cdot \nabla n)-u \cdot \nabla \nabla n) \cdot \nabla n \,dx\right|+\left|\int_{\mathbb{R}^3} u \cdot \nabla \nabla n \cdot \nabla n \,dx\right| \\
			& \leq C\left(\|\nabla u\|_{L^{\infty}}\|\nabla n\|_{L^2}+\|\nabla n\|_{L^\infty}\|\nabla u\|_{L^2}\right)\|\nabla n\|_{L^2}+C\|\text{div}u\|_{L^\infty}\|\nabla n\|_{L^2}^2 \\
			& \leq C \mathcal{E}(t) \mathcal{D}^2(t).
		\end{aligned}
	\end{equation*}
	The third term on the right-hand side of \eqref{030805} can be estimated as 	
	\begin{equation*}
		\begin{aligned}
			 \left|\int_{\mathbb{R}^3} \nabla f_2 \cdot \nabla u \,dx\right| &\leq\left|\int_{\mathbb{R}^3} \nabla(u \cdot \nabla u) \cdot \nabla u \,dx\right|+\left|\int_{\mathbb{R}^3} \nabla \left(\nabla(-\Delta)^{-1}\left(e^n-1-n\right)\right) \cdot \nabla u \,dx\right| \\
			& \leq \left| \int_{\mathbb{R}^3}(\nabla(u \cdot\nabla u)-u \cdot \nabla \nabla u) \cdot \nabla u \,dx\right|+\left| \int_{\mathbb{R}^3} u \cdot \nabla \nabla u \cdot \nabla u \,dx\right| \\
			&\quad+\left|  \int_{\mathbb{R}^3} \nabla(-\Delta)^{-1}\left(e^n-1-n\right) \cdot \Delta u \,dx  \right| \\
			& \leq C \| \nabla u \|_{L^\infty}\| \nabla  u\|_{L^2}^2+C(\|e^n-1-n\|_{L^2}+\|e^n -1-n\|_{L^1} ) \| \nabla^2 u \|_{L^2} \\
			& \leq C\mathcal{E}(t) \mathcal{D}^2(t)+C\left(\|n\|_{L^\infty}\|n\|_{L^2}+\|n\|_{L^2}^2\right)\left\| \nabla^2 u\right\|_2 \\
			& \leq C \mathcal{E}(t)\mathcal{D}^2(t).
		\end{aligned}
	\end{equation*}
Here we used the fact that
	$$
		\begin{aligned}
		 \left\|\nabla(-\Delta)^{-1}\left(e^n-1-n\right)\right\|_{L^2}&=\left\|\nabla K *\left(e^n-1-n\right)\right\|_{L^2}\\
			& \leq C(\left\|e^n-1-n\right\|_{L^2}+\left\|e^n-1-n\right\|_{L^1}) \\
			& \leq C\|n\|_{L^\infty}\|n\|_{L^2}+C\|n\|_{L^2}^2,
		\end{aligned}
	$$
	where $K$ is the fundamental solution of the Laplace equation.
	
	The remaining terms in \eqref{030805} can also be treated similarly
	\begin{equation*}%\label{030807}
		\begin{aligned}
			&-\int_{\mathbb{R}^3} \nabla(v \cdot \nabla v) \cdot \nabla v \,dx
			+\int_{\mathbb{R}^3} \nabla((\rho-1)(u-v)) \cdot \nabla v \,dx\\
			 &\quad =-\int_{\mathbb{R}^3}(\nabla(v \cdot \nabla v)-v \cdot \nabla \nabla v)\cdot\nabla v \,dx-\int_{\mathbb{R}^3} v \cdot \nabla \nabla v \cdot \nabla v \,dx-\int_{\mathbb{R}^3} (\rho-1)(u-v) \cdot \Delta v \,dx  \\
			&\quad \leq C\|\nabla v\|_{L^\infty}\|\nabla v\|_{L^2}^2+\|\rho-1\|_{L^3}\|u-v\|_{L^6}\|\Delta v\|_{L^2}\\
			&\quad \leq C\|\nabla v\|_{L^\infty}\|\nabla v\|_{L^2}^2+C\|n\|_{H^1}\|\nabla(u-v)\|_{L^2}\|\nabla^2 v\|_{L^2}\\
			&\quad \leq C\mathcal{E}(t)\mathcal{D}^2(t).
		\end{aligned}
	\end{equation*}
	Thus, we conclude from the above fact that
	\begin{align}\label{030809}
			\frac{1}{2} \frac{d}{d t}\left(\|n\|_{H^1}^2+\|\nabla u\|_{L^2}^2+\|\nabla v\|_{L^2}^2\right)+\left\|\nabla^2 v\right\|_{L^2}^2+\|\nabla(u-v)\|_{L^2}^2 
			 \leq C\mathcal{E}(t)\mathcal{D}^2(t).
	\end{align}	
Similarly, we also prove for $k=1,2$ that  
	\begin{equation}\label{030808}
		\begin{aligned}
			& \frac{1}{2} \frac{d}{d t}\left(\left\|\nabla^k n\right\|_{H^1}^2+\left\|\nabla^{k+1} u\right\|_{L^2}^2+\left\|\nabla^{k+1} v\right\|_{L^2}^2\right)+\left\|\nabla^{k+2} v\right\|_{L^2}^2+\left\|\nabla^{k+1}(u-v)\right\|_{L^2}^2 \\
			& \quad \leq C \mathcal{E}(t) \mathcal{D}^2(t).
		\end{aligned}
	\end{equation}
Combining \eqref{030809} and \eqref{030808} concludes the desired result.
\end{proof}

Note that the dissipation rate $\mathcal{D}$ includes the density term, $\|n\|_{H^3}$. Thus, in order to close the a priori estimates and provide the uniform-in-time bound for $\mathcal{E}$, we need to have the dissipation rate for the density. To this end, we provide a hypocoercivity-type estimate.

	\begin{prop}\label{p3}
Let $T>0$ and $(n,u,v)$ be the classical solution of system \eqref{Main2} satisfying the a priori assumption \eqref{030802}. Then, for $0<t\leq T$, it holds that
\begin{equation}\label{030812}
\begin{aligned}
	& \frac{d}{d t}\left\{\sum_{k= 0}^2 \int_{\mathbb{R}^3} \nabla^{k} u \cdot \nabla^{k+1} n \,dx\right\}+\frac{1}{2}\|n\|_{H^3}^2   \leq C_4\left(\|\nabla u\|_{H^2}^2+\|u-v\|_{H^2}^2\right)+C \mathcal{E}(t)\mathcal{D}^2(t),
\end{aligned}
\end{equation}
where $C_4$ is a positive constant independent of time.
\end{prop}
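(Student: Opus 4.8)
The plan is to run a standard hypocoercivity argument: since the density $n$ has no direct dissipation in \eqref{main4}, we recover it from the coupling between the velocity equation and $\nabla n$. Concretely, for $k=0,1,2$ I would differentiate $\int_{\mathbb{R}^3}\nabla^k u\cdot\nabla^{k+1}n\,dx$ in time and substitute $\partial_t u=-\nabla n-(u-v)-\nabla(-\Delta)^{-1}n+f_2$ from $\eqref{main4}_2$ and $\partial_t n=-\mathrm{div}\,u+f_1$ from $\eqref{main4}_1$. The term $-\nabla n$ in the $u$-equation contributes $-\|\nabla^{k+1}n\|_{L^2}^2$, and a short Plancherel computation shows that the electric-field term $-\nabla(-\Delta)^{-1}n$ contributes exactly $-\|\nabla^k n\|_{L^2}^2$; these are the negative quantities we want. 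Summing over $k=0,1,2$ produces $-\|n\|_{H^3}^2$ with room to spare, the lowest-order piece $\|n\|_{L^2}^2$ arising precisely from the $k=0$ electric-field contribution (this is the only place where the $\dot H^{-1}$-type smoothing of $\nabla(-\Delta)^{-1}$ enters this proposition).

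Next I would collect the linear error terms. The term $\partial_t n=-\mathrm{div}\,u+\cdots$ gives, after one integration by parts, $+\|\nabla^k\mathrm{div}\,u\|_{L^2}^2\le\|\nabla^{k+1}u\|_{L^2}^2$, whose sum over $k=0,1,2$ is bounded by $\|\nabla u\|_{H^2}^2$. The damping term $-(u-v)$ in the $u$-equation yields $-\int_{\mathbb{R}^3}\nabla^k(u-v)\cdot\nabla^{k+1}n\,dx$, which by Young's inequality is at most $\eta\|\nabla^{k+1}n\|_{L^2}^2+C_\eta\|\nabla^k(u-v)\|_{L^2}^2$; summing and taking $\eta$ small, the $n$-contributions are absorbed into the negative terms above (keeping at least $\tfrac12\|n\|_{H^3}^2$) and the remainder is controlled by $\|u-v\|_{H^2}^2$. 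This is exactly the right-hand side $C_4(\|\nabla u\|_{H^2}^2+\|u-v\|_{H^2}^2)$.

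It remains to bound the nonlinear contributions $\sum_{k=0}^2\int_{\mathbb{R}^3}\nabla^k u\cdot\nabla^{k+1}f_1\,dx$ and $\sum_{k=0}^2\int_{\mathbb{R}^3}\nabla^k f_2\cdot\nabla^{k+1}n\,dx$, with $f_1,f_2$ as in \eqref{nonlinearterms}, by $C\mathcal{E}(t)\mathcal{D}^2(t)$. For the $f_1$-terms I would first integrate by parts to move one derivative off $f_1=-u\cdot\nabla n$ (this is essential at $k=2$, to avoid $\nabla^4 n$), then apply the commutator estimate of Lemma \ref{lema2} together with the Gagliardo--Nirenberg--Sobolev bounds $\|u\|_{L^\infty},\|\nabla u\|_{L^\infty}\lesssim\|u\|_{H^3}\lesssim\mathcal{E}$ and the observation $\|\nabla u\|_{H^2}\le\|\nabla(u-v)\|_{H^2}+\|\nabla v\|_{H^2}\lesssim\mathcal{D}$, $\|n\|_{H^3}\lesssim\mathcal{D}$; each resulting term carries one factor estimated in terms of $\mathcal{E}$ and the other two in terms of $\mathcal{D}$. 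The convective part $-u\cdot\nabla u$ of $f_2$ is handled identically, and the nonlocal part $-\nabla(-\Delta)^{-1}(e^n-1-n)$ is treated exactly as in the proof of Proposition \ref{p2}: for $k=0$ via the Hardy--Littlewood--Sobolev bound $\|\nabla(-\Delta)^{-1}g\|_{L^2}\lesssim\|g\|_{L^1}+\|g\|_{L^2}$ with $g=e^n-1-n=O(n^2)$, and for $k=1,2$ via the $L^2$-boundedness of the Riesz transforms applied to $\nabla^{k-1}(e^n-1-n)$, using $\nabla(e^n-1-n)=(e^n-1)\nabla n$ and the a priori smallness \eqref{030802}. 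Summing over $k=0,1,2$ then gives \eqref{030812}.

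The main obstacle is this last step: closing the top-order ($k=2$) nonlinear terms genuinely in the form $\mathcal{E}(t)\mathcal{D}^2(t)$ — one must distribute derivatives so that no term requires $\nabla^4$ of $u$ or $n$ (forcing the integration by parts in the $f_1$-terms) and must choose judiciously which factor is placed in $L^\infty$ (and bounded by $\mathcal{E}$) versus $L^2$ (and bounded by $\mathcal{D}$), so that the time-integrable dissipation $\mathcal{D}^2$ appears with the $L^\infty$-in-time energy $\mathcal{E}$ as prefactor. Controlling the nonlocal electric-field term while keeping it in $L^2$ (HLS at lowest order, Riesz bounds at higher orders) is the other delicate point, but it parallels Proposition \ref{p2} verbatim.
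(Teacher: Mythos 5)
Your proposal is correct and matches the paper's argument essentially step for step: differentiating $\int\nabla^k u\cdot\nabla^{k+1}n\,dx$ and substituting both evolution equations is exactly the paper's "pair $\eqref{main4}_2$ with $\nabla^{k+1}n$" computation, the $-\nabla n$ and $-\nabla(-\Delta)^{-1}n$ terms produce $\|\nabla^{k+1}n\|_{L^2}^2+\|\nabla^k n\|_{L^2}^2$, the $\|\nabla^k\mathrm{div}\,u\|_{L^2}^2$ and $(u-v)$ cross-term give the right-hand side after Young, and the $f_1,f_2$ contributions are absorbed into $C\mathcal{E}\mathcal{D}^2$ via the same integration-by-parts/commutator/HLS toolkit used in Proposition \ref{p2}. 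No gaps.
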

\begin{proof}	
Multiplying $\eqref{main4}_2$ by $\nabla n$, then integrating the resulting equation over $\mathbb{R}^3$ yields 
		\begin{align*}
			& \frac{d}{d t} \int_{\mathbb{R}^3} u \cdot \nabla n \,dx+\|\nabla n\|_{L^2}^2+\int_{\mathbb{R}^3} \nabla(-\Delta)^{-1} n \cdot \nabla n \,dx \\
			&\quad =\int_{\mathbb{R}^3} u \cdot \nabla \partial_tn \,dx-\int_{\mathbb{R}^3}(u-v) \cdot \nabla n \,dx+\int_{\mathbb{R}^3}\left(-u \cdot \nabla u-\nabla(-\Delta)^{-1}\left(e^n-1-n\right)\right) \cdot \nabla n \,dx\\
			&\quad =\|\text{div} u\|_{L^2}^2+\int_{\mathbb{R}^3} u \cdot \nabla f_1 \,dx-\int_{\mathbb{R}^3}(u-v) \cdot \nabla n \,dx+\int_{\mathbb{R}^3}\left(-u \cdot \nabla u-\nabla(-\Delta)^{-1}\left(e^n-1-n\right)\right) \cdot \nabla n \,dx.
		\end{align*}
Note that
	\begin{equation*}
		\int_{\mathbb{R}^3} \nabla(-\Delta)^{-1} n \cdot \nabla n \,dx=-\int_{\mathbb{R}^3} \Delta(-\Delta)^{-1} n \cdot n \,dx=\|n\|_{L^2}^2.
	\end{equation*}
	and
	\begin{equation*}
		\int_{\mathbb{R}^3}(u-v) \cdot \nabla n \,dx \leq\frac{1}{2}\|u-v\|_{L^2}^2+ \frac{1}{2}\|\nabla n\|_{L^2}^2.
	\end{equation*}
	Applying a similar argument developed in Proposition \ref{p2} yields
	\begin{equation*}
		\int_{\mathbb{R}^3}\left(u \cdot \nabla f_1 -u \cdot \nabla u-\nabla(-\Delta)^{-1}\left(e^n-1-n\right)\right) \cdot \nabla n \,dx \leq C \mathcal{E}(t)\mathcal{D}^2(t).
	\end{equation*}
Thus, we have
	\begin{equation}\label{0308010}
		\begin{aligned}
			& \frac{d}{d t} \int_{\mathbb{R}^3} u \cdot \nabla n \,dx+\frac{1}{2}\|\nabla n \|_{L^2}^2+\|n\|_{L^2}^2  \leq C_2\left(\|\nabla  u\|_{L^2}^2+\|u-v\|_{L^2}^2\right)+C\mathcal{E}(t)\mathcal{D}^2(t).
		\end{aligned}
	\end{equation}
for some $C_2 > 0$.	
	Similarly, there exists a positive constant $C_3$ such that 
	\begin{equation}\label{0308011}
		\begin{aligned}
			& \frac{d}{d t} \int_{\mathbb{R}^3} \nabla^k u \cdot \nabla^{k+1}n \,dx+\frac{1}{2}\left\|\nabla^{k+1} n\right\|_{L^2}^2+\left\|\nabla^k n\right\|_{L^2}^2 \\
			& \quad \leq C_3\left(\left\|\nabla^{k+1} u\right\|_{L^2}^2+\left\|\nabla^k(u-v)\right\|_{L^2}^2\right)+C \mathcal{E}(t)\mathcal{D}^2(t),
		\end{aligned}
	\end{equation}
with $k=1,2$. 

Choosing $C_4=\max \left\{C_2, C_3\right\}$, we sum \eqref{0308011} with respect to $k$ from $1$ to $2$ and use \eqref{0308010} to finish the proof.
\end{proof}

 %%%%%%%%%%%%%%%%%%%%%%%%%%%%%%%%%%%%%%%%%%
%
%
%
%
%
%	
%%%%%%%%%%%%%%%%%%%%%%%%%%%%%%%%%%%%%%%%%%
\subsection{Proof of Theorem \ref{thm1}: Global-in-time well-posedness}
We now close the energy estimates which provide the global-in-time existence and uniqueness of classical solutions for the system \eqref{Main2}.

It follows from Section \ref{ssec_apriori} that
	\begin{equation}\label{030813}
		\frac{d}{d t}\Big\{\|(n, u, v)(t)\|_{H^3}^2+\|\nabla U(t)\|_{L^2}^2\Big\}+C_5\left(\|\nabla v\|_{H^3}^2+\|u-v\|_{H^3}^2\right) \leq C \mathcal{E}(t)\mathcal{D}^2(t),
	\end{equation}
for some $C_5>0$ independent of time. 

		Taking $\eqref{030813}$+$\eqref{030812} \times \beta_1$ with $\beta_1 > 0$ small enough gives
	\begin{equation}\label{030814}
		\begin{aligned}
			& \frac{d}{d t}\left\{\|(n, u, v)(t)\|_{H^3}^2+\|\nabla U(t)\|_{L^2}^2+\beta_1 \sum_{k=0}^2 \int_{\mathbb{R}^3} \nabla^{k} u \cdot \nabla^{k+1} n \,dx\right\} \\
			& \quad +\frac{1}{2} \beta_1\|n\|_{H^3}^2+C_5\left(\|\nabla v\|_{H^3}^2+\|u-v\|_{H^3}^2\right) \\
			& \qquad \leq C \mathcal{E}(t)\mathcal{D}^2(t)+C_4 \beta_1\left(\|\nabla u\|_{H^2}^2+\|u-v\|_{H^2}^2\right)+C\beta_1\mathcal{E}(t)\mathcal{D}^2(t).
		\end{aligned}
	\end{equation}
	Due to the smallness of $\beta_1$, we find that 
	$$
	\|(n, u, v)(t)\|_{H^3}^2+\|\nabla U(t)\|_{L^2}^2+\beta_1 \sum_{k=0}^2 \int_{\mathbb{R}^3} \nabla^k u \cdot \nabla^{k+1}n \,dx \simeq \|(n, u, v)(t)\|_{H^3}^2+\|\nabla U(t)\|_{L^2}^2.
	$$
Combining this with \eqref{030814} shows 
	$$
		 \frac{d}{d t}\Big\{\|(n, u, v)(t)\|_{H^3}^2+\|\nabla U(t)\|_{L^2}^2\Big\}+C_6\left(\|n\|_{H^3}^2+\|\nabla v\|_{H^3}^2+\|u-v\|_{H^3}^2\right) \leq C \mathcal{E}(t)\mathcal{D}^2(t),
	$$
	where $C_6>0$ is a constant independent of time.
	
	Integrating the above inequality with respect to time over $[0, t]$ and using the a priori assumption \eqref{030802} yields
	$$
	\begin{aligned}
		& \|(n, u, v)(t)\|_{H^3}^2+\|\nabla U(t)\|_{L^2}^2+C_6 \int_0^t\left(\| n(\tau)\|_{H^3}^2+\| \nabla v(\tau)\|_{H^3}^2+\| (u-v)(\tau) \|_{H^3}^2\right) d \tau \\
		&\quad \leq\left\|\left(n_0, u_0, v_0\right)\right\|_{H^3}^2+\|\nabla(-\Delta)^{-1}U_0\|_{L^2}^2+C \int_0^t \mathcal{E}(\tau)\mathcal{D}^2(\tau) \,d \tau \\
		&\quad \leq\left\|\left(n_0, u_0, v_0\right)\right\|_{H^3}^2+C\|n_0\|_{\dot{H}^{-1}}^2+C \int_0^t \mathcal{E}(\tau)\mathcal{D}^2(\tau)\, d \tau \\
		&\quad \leq C_7\delta_0^2+C \delta \int_0^t \mathcal{D}^2(\tau) \,d \tau,
	\end{aligned}
	$$
	where $C_7>0$ is independent of time.
	
On the other hand, by the definition of $\mathcal{E}(t)$ and $\mathcal{D}(t)$ in \eqref{EDdefinition}, we obtain
	$$
	\mathcal{E}^2(t)+C_{6} \int_0^t \mathcal{D}^2(\tau)\, d \tau \leq C_7\delta_0^2+C \delta \int_0^t \mathcal{D}^2(\tau) \,d \tau.
	$$
	Since $\delta$ is small, one has
	$$
	\mathcal{E}^2(t)+\frac{1}{2} C_6\int_0^t \mathcal{D}^2(\tau) \,d \tau \leq C_7\delta_0^2.
	$$

Choosing $\delta_0$ sufficiently small such that $\delta_0<\frac{1}{2\sqrt{C_7}}\delta$, we have $\mathcal{E}(t) \leq \frac{1}{2} \delta$, which closes the a priori assumption \eqref{030802}. Then, based on the local existence result, Theorem \ref{lem-loc}, together with the classical continuation argument, global well-posedness is established, along with \eqref{main-est}.
%\end{proof}

%%%%%%%%%%%%%%%%%%%%%%%%%%%%%%%%%%%%%%%%%%
%
%
%
%
%
%	
%%%%%%%%%%%%%%%%%%%%%%%%%%%%%%%%%%%%%%%%%%
	\section{Large-time behavior for the EP-NS system}\label{Sec3}

In this section, we investigate the large-time behavior of the EP-NS system demonstrating the decay estimates of solutions as stated in Theorem \ref{thm1}. Specifically, we establish time-decay estimates employing spectral analysis via the Hodge decomposition.

We define operators $\mathcal{K}_{1}$ and $\mathcal{K}_{\infty}$ on $L^{2}$ as
\begin{equation*}%\label{THZ5}
	\mathcal{K}_{1}f=f^\ell=\mathscr{F}^{- 1}\big(\hat\chi_{1}(\xi)\mathscr{F}[f](\xi)\big) \quad \mbox{and} \quad	\mathcal{K}_{\infty}f=f^h=\mathscr{F}^{- 1}\big(\hat\chi_{\infty}(\xi)\mathscr{F}[f](\xi)\big),
\end{equation*}
respectively, where $\hat{\chi}_{j}(\xi)(j=1,\infty)\in C^{\infty}(\mathds{R}^{3})$, $0\leq \hat\chi_{j}\leq 1$ are smooth cut-off functions defined by
\begin{equation*}
	\hat\chi_{1}(\xi)=\left\{
	\begin{array}{l}
		1\quad (|\xi|\leq r_{0}),\\
		0\quad (|\xi|\geq R_0)
	\end{array}
	\right.
	\quad \mbox{and} \quad \hat\chi_{\infty}(\xi)=1-\hat\chi_{1}(\xi).
\end{equation*}
Here $r_0 > 0$ is sufficiently small and $R_0 > 0$ is sufficiently large.

To analyze the large-time behavior of the solutions $\eta=(n,u,v)^t$ in frequency space, we adopt the low-high frequency decomposition for the solution: 
\begin{equation*}%\label{l-h-com}
	\eta(t,x)=\eta^{\ell}(t,x)+\eta^{h}(t,x)\triangleq(n^{\ell},u^{\ell},v^{\ell})+(n^{h},u^{h},v^{h}),
\end{equation*}
where $\eta^{\ell}(t,x)\triangleq\mathcal{K}_1\eta(t,x)$ denotes the low-frequency part and $\eta^{h}(t,x)\triangleq\mathcal{K}_{\infty}\eta(t,x)$ represents the high-frequency part.

Before proceeding further, we present an auxiliary lemma providing the Sobolev embedding for low- and high-frequency parts.
	\begin{lem}{\rm (\cite[Lemma 2.4]{huang2024global})}\label{lema6}
	Let $m,n \in \mathbb{R}_+$. For $0\leq n<m$, there exist positive a constant $C$ such that for $f\in H^m$,
	\begin{equation*}%\label{h-l-est1}
		\|\nabla^{m} f^\ell\|_{L^2}\leq C\|\nabla^{n} f^\ell\|_{L^2},\quad \|\nabla^{n}f^h\|_{L^2}\leq C\|\nabla^{m}f^h\|_{L^2},
	\end{equation*}
	and 
	\begin{equation*}%\label{h-l-est2}
		\|\nabla^{n} f^\ell\|_{L^2}\leq C\|\nabla^{n} f\|_{L^2},\quad\|\nabla^{n}f^h\|_{L^2}\leq C\|\nabla^{n}f\|_{L^2}.
	\end{equation*}
\end{lem}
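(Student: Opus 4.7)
The plan is to reduce all four inequalities to elementary multiplier bounds in Fourier space via Plancherel's theorem, exploiting the support properties of the cutoff symbols $\hat{\chi}_1$ and $\hat{\chi}_\infty$. The key observation is that $\hat{\chi}_1$ is supported in the ball $\{|\xi|\leq R_0\}$, $\hat{\chi}_\infty$ is supported in the complement $\{|\xi|\geq r_0\}$, and both are bounded by $1$ pointwise.

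For the last pair of inequalities, $\|\nabla^n f^\ell\|_{L^2}\leq C\|\nabla^n f\|_{L^2}$ and $\|\nabla^n f^h\|_{L^2}\leq C\|\nabla^n f\|_{L^2}$, I would write
\[
\|\nabla^n f^\ell\|_{L^2}^2 = \int_{\mathbb{R}^3} |\xi|^{2n}|\hat{\chi}_1(\xi)|^2|\hat{f}(\xi)|^2\,d\xi \leq \int_{\mathbb{R}^3}|\xi|^{2n}|\hat{f}(\xi)|^2\,d\xi = \|\nabla^n f\|_{L^2}^2,
\]
using $0\leq \hat{\chi}_1\leq 1$, and analogously for the high-frequency part with $\hat{\chi}_\infty$.

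For the first (low-frequency Bernstein) inequality, $\|\nabla^m f^\ell\|_{L^2}\leq C\|\nabla^n f^\ell\|_{L^2}$, the idea is that on the support of $\hat{\chi}_1$ we have $|\xi|\leq R_0$, so $|\xi|^{2m}=|\xi|^{2(m-n)}|\xi|^{2n}\leq R_0^{2(m-n)}|\xi|^{2n}$ whenever $m>n\geq 0$. Inserting this into the Fourier-space integral yields the bound with $C=R_0^{m-n}$. The high-frequency Bernstein inequality is symmetric: on the support of $\hat{\chi}_\infty$ we have $|\xi|\geq r_0$, hence $|\xi|^{2n}\leq r_0^{-2(m-n)}|\xi|^{2m}$, which after the Plancherel identity gives the desired bound with $C=r_0^{-(m-n)}$.

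No step here is genuinely hard; the only subtlety is ensuring that these multiplier estimates are applied within the support of the relevant cutoff, which is why the factorization $|\xi|^{2m}=|\xi|^{2(m-n)}|\xi|^{2n}$ (respectively its reciprocal) is legal only after we have already inserted $|\hat{\chi}_j|^2$ under the integral. Since everything reduces to Plancherel plus a pointwise comparison of symbols on a support set, no further machinery (commutator bounds, interpolation, or Littlewood--Paley square-function inequalities) is needed.
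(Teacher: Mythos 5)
Your proof is correct: the paper itself gives no argument for this lemma (it is quoted from \cite[Lemma 2.4]{huang2024global}), and the standard proof is precisely the Plancherel-plus-support-of-the-symbol computation you describe, with the explicit constants $R_0^{m-n}$ and $r_0^{-(m-n)}$. The only cosmetic caveat is that for real (non-integer) orders $\nabla^{m}$ should be read as $\Lambda^{m}$, and for integer orders $\|\nabla^{k}f\|_{L^2}$ and $\||\xi|^{k}\hat f\|_{L^2}$ agree only up to a dimensional constant, which is harmlessly absorbed into $C$.
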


%%%%%%%%%%%%%%%%%%%%%%%%%%%%%%%%%%%%%%%%%%
%
%
%
%
%
%	
%%%%%%%%%%%%%%%%%%%%%%%%%%%%%%%%%%%%%%%%%%
	\subsection{Spectral analysis for the linearized EP-NS system}\label{ssec_lina}
	Letting $f_i \equiv0$, $i=1,2,3$ in \eqref{main4}, we have the following linearized system:
\begin{equation}\label{Linear}
	\left\{\begin{aligned}
		&\partial_t n+\operatorname{div} u=0, \\
		&\partial_t u+\nabla n+u-v+\nabla(-\Delta)^{-1} n=0, \\
		&\partial_t v+v-\mathbb{P} u-\Delta v=0.
	\end{aligned}\right.
\end{equation}	
To estimate the optimal decay rate of convergences of solutions to the above system, we make use of spectral analysis. To this end, we first decompose the velocities $u$ and $v$ using the Hodge decomposition, separating them into their divergence and curl components. Specifically, let us define
$$
q=\Lambda^{-1} \text{div} u, \quad w=\Lambda^{-1} \text{curl}  u, \quad \mbox{and} \quad b=\Lambda^{-1} \text{curl} v ,
$$
where  the operator $\Lambda^{-1}$ is the pseudodifferential operator defined as \eqref{Lambda}. 

We then derive
\begin{equation}\label{A}
	\left\{\begin{aligned}
		&\partial_t n+\Lambda q=0, \\
		&\partial_t q+q-\Lambda n-\Lambda^{-1} n=0,
	\end{aligned}\right.
\end{equation}
and
\begin{equation}\label{B}
\left\{\begin{aligned}
		&\partial_t w+w-b=0, \\
		&\partial_t b+b-w+\Lambda^2 b=0.
	\end{aligned}\right.
\end{equation}
Taking Fourier transform of \eqref{A} with respect to $x$ yields
$$
\left\{\begin{aligned}
	&\partial_t \hat{n}+|\xi| \hat{q}=0, \\
	&\partial_t \hat{q}+\hat{q}-|\xi| \hat{n}-|\xi|^{-1} \hat{n}=0,\\
	&(\hat{n}, \hat{q})|_{t=0}=(\hat{n}_0, \hat{q}_0).
\end{aligned}\right.
$$			
Letting $\hat{\Phi}=(\hat{n}, \hat{q})^t$, we can rewrite the above system as
$$
\left\{\begin{aligned}
	&\partial_t \hat{\Phi}+\hat{S}_1 \hat{\Phi}=0, \\
	&\hat{\Phi}|_{t=0}=\hat{\Phi}_0,
\end{aligned}\right. \quad \mbox{with}\quad \hat{S}_1=\left(\begin{array}{cc}
	0 & |\xi| \\
	-|\xi|-|\xi|^{-1} & 1
\end{array}\right).
$$
The eigenvalues of $\hat{S}_1$ are computed by solving
\begin{equation*}
	\begin{aligned}
		\text{det}(\lambda I+\hat{S}_1)=\left|\begin{array}{cc}
			\lambda & |\xi| \\
			-|\xi|-|\xi|^{-1} & \lambda+1
		\end{array}\right| & =\lambda^2+\lambda+|\xi|^2+1=0.
	\end{aligned}
\end{equation*}
A simple computation gives
\begin{align*}
		\lambda_1=\frac{-1+\sqrt{1-4(|\xi|^{2}+1)}}{2}=\frac{-1+\sqrt{3+4|\xi|^{2}}i}{2}, \quad 
		\lambda_2=\frac{-1-\sqrt{1-4(|\xi|^{2}+1)}}{2}=\frac{-1-\sqrt{3+4|\xi|^{2}}i}{2}.
	\end{align*}
Then the explicit formula of $\hat{\Phi}=(\hat{n},\hat{q})^t$ can be expressed as
\begin{equation}\label{032501}
	\hat{\Phi}=  \left(\begin{array}{l}
		\hat{n} \\
		\hat{q}
	\end{array}\right)=\left(\begin{array}{ll}
		\frac{\lambda_1 e^{\lambda_2 t}-\lambda_2 e^{\lambda_1 t}}{\lambda_1-\lambda_2} & \frac{e^{\lambda_2 t}-e^{\lambda_1 t}}{\lambda_1-\lambda_2}|\xi| \\
		\frac{e^{\lambda_1 t}-e^{\lambda_2 t}}{\lambda_1-\lambda_2} \frac{1+|\xi|^2}{|\xi|} & \frac{\lambda_1 e^{\lambda_1 t}-\lambda_2 e^{\lambda_2 t}}{\lambda_1-\lambda_2}
	\end{array}\right)\left(\begin{array}{l}
		\hat{n}_0 \\
		\hat{q}_0
	\end{array}\right).
\end{equation}	
Similarly as the above, letting 			
$\hat{\Psi}=(\hat{w}, \hat{b})^t$ and $\hat{\Psi}_0=(\hat{w}_0, \hat{b}_0)^t$,
we obtain from \eqref{B} that
\begin{equation*}
	\left\{\begin{array}{l}
		\partial_t \hat{\Psi}+\hat{S_2} \hat{\Psi}=0, \\
		\hat{\Psi}|_{t=0}=\hat{\Psi}_0,
	\end{array}\right. \quad \mbox{with}\quad 	\hat{S_2}=\left(\begin{array}{cc}
		I & -I \\
		-I &(1+|\xi|^2) I
	\end{array}\right)
\end{equation*}
and the eigenvalues of $\hat{S_2}$ can be computed by solving
\begin{equation*}
	|\lambda I+\hat{S_2}|= \left(\lambda^2+(2+|\xi|^2) \lambda+|\xi|^2\right)^3=0.
\end{equation*}	
After directly calculation, we find
\begin{align*}
	\lambda_3=\frac{-\left(2+|\xi|^2\right)+\sqrt{4+|\xi|^ 4}}{2}  \quad \mbox{and} \quad \lambda_4=\frac{-\left(2+|\xi|^2\right)-\sqrt{4+|\xi|^ 4}}{2} 
\end{align*}
indicating that the eigenvalues $\lambda_3$ and $\lambda_4$ each have an algebraic multiplicity of $3$.
Then, $\hat{\Psi}$ can be explicitly expressed as
\begin{equation}\label{032502}
	\hat{\Psi}(t,\xi)=\left(\begin{array}{cc}\hat{w}  \\ \hat{b} \end{array} \right)=
	\left(\begin{array}{cc}
		\frac{(\lambda_3+1)e^{\lambda_4t}-(\lambda_4+1)e^{\lambda_3t}}{\lambda_3-\lambda_4}I& \frac{e^{\lambda_3t}-e^{\lambda_4t}}{\lambda_3-\lambda_4} I\\
		\frac{e^{\lambda_3t}-e^{\lambda_4t}}{\lambda_3-\lambda_4}I
		&\frac{(\lambda_3+1)e^{\lambda_3t}-(\lambda_4+1)e^{\lambda_4t}}{\lambda_3-\lambda_4}I
	\end{array}\right)\left(\begin{array}{cc}\hat{w}_0 \\ \hat{b}_0\end{array} \right).
\end{equation}

In the following lemma, we investigate the asymptotic behavior of the eigenvalues by dividing the frequency into the low-frequency and high-frequency parts.
\begin{lem}\label{THZ4}
	Assume $r_0$ is a sufficiently small positive constant. For the low frequency part $|\xi|<r_0$, the eigenvalues satisfy
	\begin{equation*}
		\begin{aligned}
		 &\lambda_3= -\frac{1}{2}|\xi|^2+O\left(|\xi|^4\right)  \quad \mbox{and} \quad \lambda_4= -2-\frac{1}{2}|\xi|^2+O\left(|\xi|^4\right).
		\end{aligned}
	\end{equation*}
	For $|\xi|\geq r_0$, there exists a positive constant $R$ such that
	\begin{equation*}
		{\rm{Re}}\,(\lambda_1,\lambda_2,\lambda_3,\lambda_4)\leq -R.
	\end{equation*}
\end{lem}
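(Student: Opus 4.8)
The lemma has two parts. The first concerns the low-frequency asymptotics of $\lambda_3, \lambda_4$ (the eigenvalues of $\hat S_2$, governing the "curl" part). The second asserts a uniform spectral gap for all four eigenvalues $\lambda_1, \lambda_2, \lambda_3, \lambda_4$ on the region $|\xi| \geq r_0$.

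**Proof plan.**

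The plan is to treat the low-frequency expansion of $\lambda_3, \lambda_4$ first, since it is a direct Taylor expansion, and then dispose of the uniform-gap claim by a compactness/monotonicity argument on $|\xi| \geq r_0$, splitting further into $r_0 \le |\xi| \le R_1$ (compact, so continuity of eigenvalues suffices) and $|\xi| \ge R_1$ (large frequency, where one reads off the behavior from the explicit quadratic formulas).

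For the first part I would start from the characteristic equation for $\hat S_2$, namely $\lambda^2 + (2 + |\xi|^2)\lambda + |\xi|^2 = 0$, so that $\lambda_{3,4} = \tfrac{1}{2}\big(-(2+|\xi|^2) \pm \sqrt{(2+|\xi|^2)^2 - 4|\xi|^2}\big) = \tfrac12\big(-(2+|\xi|^2)\pm\sqrt{4 + |\xi|^4}\big)$. Writing $s = |\xi|^2$ and expanding $\sqrt{4 + s^2} = 2\sqrt{1 + s^2/4} = 2 + \tfrac{s^2}{4} + O(s^4)$ as $s \to 0$, the $+$ root gives $\lambda_3 = \tfrac12\big(-(2+s) + 2 + \tfrac{s^2}{4} + O(s^4)\big) = -\tfrac{s}{2} + O(s^2) = -\tfrac12|\xi|^2 + O(|\xi|^4)$, and the $-$ root gives $\lambda_4 = \tfrac12\big(-(2+s) - 2 - \tfrac{s^2}{4} + O(s^4)\big) = -2 - \tfrac{s}{2} + O(s^2) = -2 - \tfrac12|\xi|^2 + O(|\xi|^4)$. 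This is the claimed expansion; it is entirely routine.

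For the uniform gap on $|\xi| \ge r_0$, I would argue as follows. On the compact annulus $\{r_0 \le |\xi| \le R_1\}$ (any fixed $R_1 > r_0$), each $\lambda_i$ is a continuous function of $|\xi|$, and I must check that $\mathrm{Re}\,\lambda_i < 0$ everywhere on it; then by continuity and compactness $\max_i \sup_{r_0 \le |\xi| \le R_1} \mathrm{Re}\,\lambda_i =: -R' < 0$. For $\lambda_1, \lambda_2$ (roots of $\lambda^2 + \lambda + (1 + |\xi|^2) = 0$) one has $\mathrm{Re}\,\lambda_{1,2} = -\tfrac12$ identically (the discriminant $1 - 4(1+|\xi|^2) < 0$ for all $\xi$), so these contribute $-\tfrac12$ with no dependence on $\xi$ at all — already a clean uniform bound on the whole space. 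For $\lambda_3, \lambda_4$: since the product of the roots is $|\xi|^2 > 0$ and their sum is $-(2 + |\xi|^2) < 0$, both roots have negative real part for every $\xi \neq 0$ (if the roots are complex conjugate, $\mathrm{Re} = -\tfrac12(2+|\xi|^2) < 0$; if real, both are negative since sum $<0$, product $>0$); hence $\mathrm{Re}\,\lambda_3 < 0$ pointwise, and continuity plus compactness on $\{r_0 \le |\xi| \le R_1\}$ gives a uniform negative bound. Finally, for $|\xi| \ge R_1$ with $R_1$ large: $\lambda_1, \lambda_2$ still have real part $-\tfrac12$; $\lambda_4 \to -\infty$, harmless; and $\lambda_3 = \tfrac12\big(-(2+|\xi|^2) + \sqrt{4 + |\xi|^4}\big)$. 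Rationalizing, $\lambda_3 = \dfrac{(4+|\xi|^4) - (2+|\xi|^2)^2}{2\big((2+|\xi|^2) + \sqrt{4+|\xi|^4}\big)} = \dfrac{-4|\xi|^2}{2\big((2+|\xi|^2)+\sqrt{4+|\xi|^4}\big)} \to -1$ as $|\xi| \to \infty$, so $\mathrm{Re}\,\lambda_3 \le -\tfrac12$ for $|\xi| \ge R_1$ with $R_1$ chosen large enough. Taking $R = \min\{\tfrac12, R', 1 - \varepsilon\}$ over the three regimes yields the claim.

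The first part carries no real difficulty. The only point in the second part that requires a moment of care is to make sure the "borderline" eigenvalues $\lambda_1, \lambda_2$ (whose real part is exactly $-\tfrac12$ and does not decay) are handled by observing this is still a strictly negative constant — so the obstacle is less a computational one and more making sure the case split $\{r_0 \le |\xi| \le R_1\} \cup \{|\xi| \ge R_1\}$ is set up so that continuity/compactness genuinely closes the middle range and the explicit asymptotics close the tail; I expect the Hodge decomposition having already reduced everything to $2\times 2$ blocks makes this transparent.
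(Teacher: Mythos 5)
Your proposal is correct and follows essentially the same route as the paper: a direct Taylor expansion of $\sqrt{4+|\xi|^4}$ for the low-frequency asymptotics, the observation that $\mathrm{Re}\,\lambda_1=\mathrm{Re}\,\lambda_2=-\tfrac12$ identically, and a three-region split for the uniform gap on $|\xi|\geq r_0$. The only difference is cosmetic: on the middle annulus the paper writes down the explicit bound $\mathrm{Re}\,(\lambda_3,\lambda_4)\leq -\tfrac{r_0^2}{R_0^2+2}$ (which your rationalization $\lambda_3=-2|\xi|^2/\bigl((2+|\xi|^2)+\sqrt{4+|\xi|^4}\bigr)$ in fact yields immediately), whereas you invoke compactness and continuity; both close the argument.
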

\begin{proof}
For $|\xi|< r_0 \ll 1$, we find
$$
\begin{aligned}
	\sqrt{4+|\xi|^ 4}=2 \sqrt{1+\frac{|\xi|^4}{4}}=2\left(1+\frac{1}{8}|\xi|^4 + O(|\xi|^8)\right),
\end{aligned}
$$
and thus
$$
\begin{aligned}
	& \lambda_3=\frac{-\left(2+|\xi|^2\right)+\sqrt{4+|\xi|^ 4}}{2}=-\frac{1}{2}|\xi|^2 + O\left(|\xi|^4\right), \\
	& \lambda_4=\frac{-\left(2+|\xi|^2\right)-\sqrt{4+|\xi|^ 4}}{2}=-2-\frac{1}{2}|\xi|^2 + O\left(|\xi|^4\right). 
	&
\end{aligned}
$$
On the other hand, for $|\xi|>R_0 \gg 1$, we get
$$
\begin{aligned}
\sqrt{4+|\xi|^ 4}=|\xi|^2 \sqrt{1+\frac{4}{|\xi|^4}}=|\xi|^2 \left(1+\frac{2}{|\xi|^4}+O(|\xi|^{-8})\right), 
\end{aligned}
$$			
and this implies
$$
\begin{aligned}
	& \lambda_3=\frac{-\left(2+|\xi|^2\right)+\sqrt{4+|\xi|^ 4}}{2}=-1+O\left(|\xi|^{-2}\right), \\
	& \lambda_4=\frac{-\left(2+|\xi|^2\right)-\sqrt{4+|\xi|^ 4}}{2}=-1-|\xi|^{2}+O\left(|\xi|^{-2}\right). \\
\end{aligned}
$$			
For $r_0\leq |\xi|\leq R_0$ , we easily obtain $ \text{Re}\,(\lambda_3,\lambda_4)\leq -\frac{r_{0}^{2}}{R_0^2+2}.$ 
Combining $	{\rm{Re}}~{\lambda_1}=\	{\rm{Re}}~{\lambda_2}=-\frac12$ and choosing $R=\min\left\{\frac{r_{0}^{2}}{R_0^2+2},\frac12\right\}$, we conclude for $|\xi|\geq r_0$ that
	$${\rm{Re}}\,(\lambda_1,\lambda_2,\lambda_3,\lambda_4)\leq -R.$$
This completes the proof.
\end{proof}

%%%%%%%%%%%%%%%%%%%%%%%%%%%%%%%%%%%%%%%%%%%%%%%%%%
%
%
%
%
%
%%%%%%%%%%%%%%%%%%%%%%%%%%%%%%%%%%%%%%%%%%%%%%%%%%
\subsubsection{Green function of the linearized system}\label{sssec_green}
To establish the time-decay estimates of the linear system, let $\eta=(n,u,v)^t$. Then, the system \eqref{Linear} can be expressed as 
\begin{equation}\label{eqn_lin}
	\left\{\begin{array}{l}
		\partial_t \eta(t)+\mathcal{L} \eta(t)=0, \\
		\eta|_{t=0}=\eta_0,
	\end{array} \right.
\end{equation} 
where the linear operator $\mathcal{L}$ is given by
\begin{equation}\label{S}
	\mathcal{L}=\left(\begin{array}{ccc}
		0 & \rm{div} & 0  \\
		\nabla+\nabla(-\Delta)^{-1} &  I  & -I \\
		0 &-\mathbb{P}& I -\Delta \\
	\end{array}\right).
\end{equation}
Taking Fourier transformation of \eqref{Linear} yields
\begin{equation*}%\label{green}
	\left\{\begin{array}{l}
		\partial_t \hat{\eta}(t)+\hat{\mathcal{L}} \hat{\eta}(t)=0, \\
		\hat{\eta}|_{t=0}=\hat{\eta}_0.
	\end{array} \right. 
\end{equation*}
Then, by solving the above equations, we have
$$
\hat{\eta}(t,\xi)=e^{-t\hat{\mathcal{L}}}\hat{\eta}_0=\hat{S}(t,\xi)\hat{\eta}_0,
$$ 
where the Green function $S(t,x)=e^{-t\mathcal{L}}$. We can directly obtain the time decay rates of the Green function $S(t,x)$ as follows:
\begin{prop}\label{lemL1}
	For a given function $g(t,x)=(g_1,g_2,g_3)^t$, we have the following decay estimates of the Green function $S(t,x)=(S_{ik})_{3\times3}$:
\begin{equation}\label{S(t)}
	\begin{aligned}
&\|\nabla^k S_{1i}*g_i\|_{L^2} \leq Ce^{-Rt}\|\nabla^k g_i\|_{L^2}, \quad i=1,2, \cr
&\|\nabla^k S_{21}*g_1\|_{L^2}\leq Ce^{-Rt}(\|\nabla^{k} (-\Delta)^{-1}\nabla g_1\|_{L^2}+\|\nabla^kg_1\|_{L^2}), \quad \mbox{and}\\
&\|\nabla^k S_{ij} *g_j\|_{L^2}\leq C(1+t)^{-\frac{3}{4}-\frac{k}{2}}\|g_j\|_{L^1}+Ce^{-Rt}\|\nabla^k g_j\|_{L^2}, \quad i,j=2,3.
	\end{aligned}
\end{equation}	
In particular, for the low-frequency part $S^\ell(t,x)=\mathcal{K}_1S(t,x)=(S^\ell_{ik})_{3\times 3}$, we have
\begin{equation}\label{S(t)2}
	\begin{aligned}
&\|\nabla^k S_{1i}^\ell*g_i\|_{L^2}\leq Ce^{-Rt}\|\nabla^k g_i\|_{L^2}, \quad i=1,2,  \cr
&\|\nabla^k S_{21}^\ell*g_1\|_{L^2}\leq Ce^{-Rt}\|\nabla^{k} (-\Delta)^{-1}\nabla g_1\|_{L^2}, \quad \mbox{and}\\
&\|\nabla^k S_{ij}^\ell *g_j\|_{L^2}\leq C(1+t)^{-\frac{3}{4}-\frac{k}{2}}\|g_j\|_{L^1},\quad i,j=2,3.
	\end{aligned}
\end{equation}		
\end{prop}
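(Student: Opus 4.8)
The proof proceeds by deriving pointwise-in-frequency bounds on the entries $\hat{S}_{ik}(t,\xi)$ of the Green matrix, then converting these into $L^2$-decay estimates via Plancherel and the Hausdorff--Young inequality. First I would assemble $\hat{S}(t,\xi)$ from the block structure: the Hodge decomposition splits the dynamics into the compressible block $\hat{\Phi}=(\hat{n},\hat q)^t$ governed by \eqref{032501}, the curl block $\hat{\Psi}=(\hat w,\hat b)^t$ governed by \eqref{032502}, and a trivial curl-free piece for $v$ (recall $\operatorname{div}v=0$, so $v=\Lambda^{-1}\operatorname{curl}^*b$ up to the usual identifications). Reassembling $u=\Lambda^{-1}\nabla q+(\text{curl part})$ and tracking how the scalar multipliers in \eqref{032501}--\eqref{032502} combine with the Riesz-type symbols $\xi/|\xi|$, $\xi\xi^t/|\xi|^2$ produces explicit expressions for each $\hat S_{ik}(t,\xi)$, all built from $e^{\lambda_j t}$, $j=1,\dots,4$, divided by the spectral gaps $\lambda_1-\lambda_2$, $\lambda_3-\lambda_4$.

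The core estimates then split by frequency. For $|\xi|\ge r_0$, Lemma \ref{THZ4} gives $\operatorname{Re}\lambda_j\le -R$ for all $j$, so every $|\hat S_{ik}(t,\xi)|$ is bounded by $Ce^{-Rt}$ times the appropriate algebraic symbol; the only subtlety is the $S_{21}$ entry, where the multiplier $\frac{1+|\xi|^2}{|\xi|}$ from \eqref{032501} forces the extra term $\|\nabla^k(-\Delta)^{-1}\nabla g_1\|_{L^2}$ (i.e. the $|\xi|^{-1}$ singularity is genuinely there and must be carried as a $\dot H^{-1}$-type norm on the data). For $|\xi|<r_0$, the compressible eigenvalues $\lambda_{1,2}$ still have real part $-\tfrac12$, so $S_{1i}^\ell$ and $S_{21}^\ell$ again decay exponentially (this is exactly the exponential density decay advertised in the introduction). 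The curl eigenvalue $\lambda_3=-\tfrac12|\xi|^2+O(|\xi|^4)$, however, is only parabolic: here $|\hat S^\ell_{ij}(t,\xi)|\lesssim e^{-c|\xi|^2 t}$ for $i,j=2,3$, and the standard computation $\big\|\,e^{-c|\xi|^2t}|\xi|^k\hat g\,\big\|_{L^2(|\xi|<r_0)}\le \|\hat g\|_{L^\infty}\big\||\xi|^ke^{-c|\xi|^2t}\big\|_{L^2}\lesssim (1+t)^{-3/4-k/2}\|g\|_{L^1}$ yields the heat-type rate. Adding the high-frequency exponential contribution gives the full-space bounds in \eqref{S(t)}, and discarding it (since $S^\ell$ already carries the cutoff $\hat\chi_1$) gives the cleaner low-frequency bounds \eqref{S(t)2}.

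One book-keeping point deserves care: $\lambda_3$ and $\lambda_4$ have algebraic multiplicity $3$, so a priori one might fear polynomial-in-$t$ factors $te^{\lambda_j t}$ in $\hat\Psi$. This does not happen because $\hat S_2$ is block-scalar ($\hat S_2 = A\otimes I_3$ with $A$ a $2\times2$ matrix having simple eigenvalues for $\xi\ne0$), so the $3\times3$ identity blocks in \eqref{032502} already encode the full solution with no Jordan correction; the multiplicity is "geometric as well as algebraic." I would state this explicitly when invoking \eqref{032502}. The remaining work is routine: bound $|\lambda_1-\lambda_2|=\sqrt{3+4|\xi|^2}\gtrsim 1$ and $|\lambda_3-\lambda_4|=\sqrt{4+|\xi|^4}\gtrsim 1$ uniformly, check that each ratio of exponentials over spectral gap is controlled by $\max_j e^{\operatorname{Re}\lambda_j t}$ (for real distinct $\lambda_{3,4}$ use that $\frac{e^{\lambda_3 t}-e^{\lambda_4 t}}{\lambda_3-\lambda_4}$ is a divided difference, hence $\le t\,e^{\lambda_3 t}\le C e^{-c|\xi|^2 t/2}$ on the low-frequency regime and $\le Ce^{-Rt}$ on the high-frequency regime), and assemble.

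**Main obstacle.** The principal difficulty is not any single estimate but the bookkeeping in reconstructing the $3\times3$ Green matrix $\hat S_{ik}$ from the two Hodge sub-blocks while correctly tracking the Riesz multipliers — in particular making sure the $|\xi|^{-1}$ factor in the $(2,1)$ entry is the \emph{only} negative power that appears (so that only $g_1$, not $g_2,g_3$, needs the $(-\Delta)^{-1}\nabla$ correction), and verifying that the curl block contributes the parabolic $(1+t)^{-3/4-k/2}$ rate to \emph{both} the $u$ and $v$ components (entries $(2,2),(2,3),(3,2),(3,3)$) but decouples completely from the density row and column. Once the symbol expressions are pinned down, the decay extraction is the textbook low/high-frequency argument.
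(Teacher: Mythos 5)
Your proposal follows essentially the same route as the paper: reconstruct $\hat S(t,\xi)$ from the two Hodge blocks \eqref{032501}--\eqref{032502}, then convert the pointwise symbol bounds of Lemma \ref{THZ4} into $L^2$ estimates via Plancherel, treating the low frequencies of the curl block as a heat kernel and all remaining entries (including the $|\xi|^{-1}$ singularity in the $(2,1)$ entry, carried as a $\dot H^{-1}$-type norm) as exponentially decaying. One small correction: your divided-difference step $\bigl|\tfrac{e^{\lambda_3t}-e^{\lambda_4t}}{\lambda_3-\lambda_4}\bigr|\le te^{\lambda_3t}\le Ce^{-c|\xi|^2t/2}$ fails uniformly in $\xi$ (take $|\xi|^2t=O(1)$ with $t$ large), but it is also unnecessary --- since $|\lambda_3-\lambda_4|=\sqrt{4+|\xi|^4}\ge 2$, the triangle inequality already gives $\bigl|\tfrac{e^{\lambda_3t}-e^{\lambda_4t}}{\lambda_3-\lambda_4}\bigr|\le e^{\lambda_3t}$, which is the bound actually needed.
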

\begin{proof}
Note that the function $u$ can be decomposed into its divergence part and curl part as follows:
\begin{align*}
u&=-\Lambda^{-1}\nabla\Lambda^{-1}\text{div}u+\Lambda^{-1}\text{curl} ~(\Lambda^{-1}\text{curl}~u) =-\Lambda^{-1}\nabla q+\Lambda^{-1}\text{curl}~w.
\end{align*}
Then by taking Fourier transform and utilizing the expression of $(\hat{q},\hat{w})$ in \eqref{032501} and \eqref{032502}, we get 
\begin{equation*}
	\begin{aligned}
		 \hat{u}(t,\xi)&=-\frac{1}{|\xi|} i \xi \cdot\left(\frac{e^{\lambda_1 t}-e^{\lambda_2 t}}{\lambda_1-\lambda_2} \frac{1+|\xi|^2}{|\xi|} \hat{n}_0+\frac{\lambda_1 e^{\lambda_1 t}-\lambda_2 e^{\lambda_2 t}}{\lambda_1-\lambda_2} \hat{q}_0\right) \\
		&\quad +\frac{1}{|\xi|}\left(\begin{array}{ccc}
			0 & -i \xi_3 & i \xi_2 \\
			i \xi_3 & 0 & -i \xi_1 \\
			-i \xi_2 & i \xi_1 & 0
		\end{array}\right)\left[\frac{\left(\lambda_3+1\right) e^{\lambda_4 t}-\left(\lambda_4+1\right) e^{\lambda_3 t}}{\lambda_3-\lambda_4} \hat{w}_0+\frac{e^{\lambda_3 t}-e^{\lambda_4 t}}{\lambda_3-\lambda_4} \hat{b}_0\right] \\
		& =-\frac{1+|\xi|^2}{|\xi|^2} \frac{e^{\lambda_1 t}-e^{\lambda_2 t}}{\lambda_1-\lambda_2} i \xi\hat{n}_0+\frac{e^{\lambda_3 t}-e^{\lambda_4 t}}{\lambda_3-\lambda_4} \hat{v}_0\\
		&\quad+\left[\frac{\left(\lambda_3+1\right) e^{\lambda_4 t}-(\lambda_4+1) e^{\lambda_3 t}}{\lambda_3-\lambda_4}\left(I-\frac{\xi \xi^ t}{|\xi|^2}\right)+\frac{\lambda_1 e^{\lambda_1 t}-\lambda_2 e^{\lambda_2 t}}{\lambda_1-\lambda_2} \frac{\xi \xi^t}{|\xi|^2}\right] \hat{u}_0 .
	\end{aligned}
\end{equation*}
The Fourier transform of velocity $v$ can be treated similarly, giving:
\begin{equation*}		
\hat{v}(t,\xi)=\frac{e^{\lambda_3t}-e^{\lambda_4t}}{\lambda_3-\lambda_4}\Big(I-\frac{\xi\xi^t}{|\xi|^2}\Big)\hat{u}_0(\xi)+\frac{(\lambda_3+1)e^{\lambda_3t}-(\lambda_4+1)e^{\lambda_4t}}{\lambda_3-\lambda_4}\hat{v}_0(\xi).
\end{equation*}
In summary, we obtain
\begin{equation}\label{Main3}
	\left\{
	\begin{aligned}
		\hat{n}(t,\xi)&=\frac{\lambda_1 e^{\lambda_2 t}-\lambda_2 e^{\lambda_1 t}}{\lambda_1-\lambda_2} \hat{n}_0-\frac{e^{\lambda_1 t}-e^{\lambda_2 t}}{\lambda_1-\lambda_2}i\xi\cdot\hat{u}_0, \\
		\hat{u}(t,\xi)& =-\frac{1+|\xi|^2}{|\xi|^2} \frac{e^{\lambda_1 t}-e^{\lambda_2 t}}{\lambda_1-\lambda_2} i \xi \hat{n}_0 +\frac{e^{\lambda_3 t}-e^{\lambda_4 t}}{\lambda_3-\lambda_4} \hat{v}_0  \\
		&\quad +\left[\frac{\left(\lambda_3+1\right) e^{\lambda_4 t}-(\lambda_4+1) e^{\lambda_3 t}}{\lambda_3-\lambda_4}\left(I-\frac{\xi \xi^ t}{|\xi|^2}\right)+\frac{\lambda_1 e^{\lambda_1 t}-\lambda_2 e^{\lambda_2 t}}{\lambda_1-\lambda_2} \frac{\xi \xi^t}{|\xi|^2}\right] \hat{u}_0,\\
		\hat{v}(t,\xi)&=\frac{e^{\lambda_3t}-e^{\lambda_4t}}{\lambda_3-\lambda_4}\big(I-\frac{\xi\xi^t}{|\xi|^2}\big)\hat{u}_0(\xi)+\frac{(\lambda_3+1)e^{\lambda_3t}-(\lambda_4+1)e^{\lambda_4t}}{\lambda_3-\lambda_4}\hat{v}_0(\xi).
	\end{aligned}
	\right.
\end{equation}
The Fourier transform of Green function $S(t,x)$ is calculated as 
$$
\begin{aligned}
	&\hat{S}(t,\xi)\\
	&=
	\left(\begin{array}{ccc}
		\frac{\lambda_1e^{\lambda_2t}-\lambda_2e^{\lambda_1t}}{\lambda_1-\lambda_2}& -\frac{e^{\lambda_1t}-e^{\lambda_2t}}{\lambda_1-\lambda_2}i\xi^t &0\\
		-\frac{1+|\xi|^2}{|\xi|^2}\frac{e^{\lambda_1t}-e^{\lambda_2t}}{\lambda_1-\lambda_2} i\xi
		&\frac{(\lambda_3+1)e^{\lambda_4t}-(\lambda_4+1)e^{\lambda_3t}}{\lambda_3-\lambda_4}\big(I-\frac{\xi\xi^t}{|\xi|^2}\big)+\frac{\lambda_1e^{\lambda_1t}-\lambda_2e^{\lambda_2t}}{\lambda_1-\lambda_2}\frac{\xi\xi^t}{|\xi|^2}&\frac{e^{\lambda_3t}-e^{\lambda_4t}}{\lambda_3-\lambda_4}I\\
		0&\frac{e^{\lambda_3t}-e^{\lambda_4t}}{\lambda_3-\lambda_4t}\big(I-\frac{\xi\xi^t}{|\xi|^2}\big)&\frac{(\lambda_3+1)e^{\lambda_3t}-(\lambda_4+1)e^{\lambda_4t}}{\lambda_3-\lambda_4}I
	\end{array}\right)\\
	&\triangleq
	\left(\begin{array}{ccc}
		\widehat{S_{11}}&\widehat{S_{12}} &0\\
		\widehat{S_{21}}
		&\widehat{S_{22}}&\widehat{S_{23}}\\
		0&\widehat{S_{32}}&\widehat{S_{33}}
	\end{array}\right).
\end{aligned}
$$
Therefore, applying Lemma \ref{THZ4} yields the desired result \eqref{S(t)}. In particular, for the low-frequency part, we repeat the process in \eqref{S(t)} to obtain \eqref{S(t)2}. Indeed, since $|\xi|<r_0$, we find
	$$
	\lambda_1=-\frac{1}{2}+\frac{\sqrt{3}}{2}i+O(|\xi|^2)\quad \mbox{and} \quad \lambda_2=-\frac{1}{2}-\frac{\sqrt{3}}{2}i+O(|\xi|^2).
	$$
	For $|\xi|>R_0$, we get
	 $$
	 \lambda_1=-\frac{1}{2}+i|\xi|+O(|\xi|^{-1}) \quad \mbox{and} \quad  \lambda_2=-\frac{1}{2}-i|\xi|+O(|\xi|^{-1}).
	 $$
	 On the other hand, when $r_0\leq |\xi|\leq R_0$, we obtain
	 $$
	  \lambda_1=\frac{-1+\sqrt{3+4|\xi|^{2}}i}{2} \quad \mbox{and} \quad \lambda_2=\frac{-1-\sqrt{3+4|\xi|^{2}}i}{2},\quad \text{Re}~ \lambda_1=\text{Re}~\lambda_2=-\frac{1}{2}.
	 $$
	 For $|\xi|<r_0$, it is straightforward to estimate that 
	 \begin{align*}
	 \left| \frac{1}{\lambda_1-\lambda_2}\right|\leq C,~~
	\left| \frac{\lambda_1}{\lambda_1-\lambda_2}\right|\leq C,~~ \left|\frac{\lambda_2}{\lambda_1-\lambda_2}\right|\leq C,
	\end{align*}
	 and for $|\xi|\geq r_0$ that
	 \begin{align*}
	\left| \frac{1}{\lambda_1-\lambda_2}\right|\leq C,~~ \left| \frac{\lambda_1}{\lambda_1-\lambda_2}\right|\leq C,~~ \left|\frac{\lambda_2}{\lambda_1-\lambda_2}\right|\leq C.
	\end{align*}
Thus, we have
	\begin{align*}
		\|\nabla^kS_{11}*g_1\|_{L^2}&=\|(i\xi)^k\hat{S}_{11}\hat{g}_1\|_{L^2}\cr
		&\leq \left\|	\frac{\lambda_1e^{\lambda_2t}-\lambda_2e^{\lambda_1t}}{\lambda_1-\lambda_2}(i\xi)^k\hat{g}_1(\xi)\right\|_{L^2(|\xi|<r_0)}+\left\|	\frac{\lambda_1e^{\lambda_2t}-\lambda_2e^{\lambda_1t}}{\lambda_1-\lambda_2}(i\xi)^k\hat{g}_1(\xi)\right\|_{L^2(|\xi|\geq r_0)}\nonumber\\
		&\leq Ce^{-Rt}(\|(i\xi)^k\hat{g}_1(\xi)\|_{L^2(|\xi|<r_0)}+\|(i\xi)^k\hat{g}_1(\xi)\|_{L^2(|\xi|\geq r_0)})\nonumber\\
		&\leq Ce^{-Rt}\|\nabla^kg_1\|_{L^2}.
	\end{align*}	
The estimates for the other terms follow similarly, completing the proof.
\end{proof}

\subsubsection{Optimal temporal decay estimates for the linearized system}
In this part, we present both lower and upper bound estimates of decay rates for the velocities. Proposition \ref{lemL1} indicates that achieving a lower bound on the decay rate of convergence for the density is challenging since it decays exponentially fast towards the equilibrium state.
	
More precisely, the main result of this part can be stated as follows:
\begin{prop}\label{pro-li-d}
	Assume that all the conditions in Theorem \ref{thm3} hold. Let $(n,u,v)$ be the global classical solution to the linear system \eqref{eqn_lin}. Then for sufficiently large $t\geq t_0$, we have
	\begin{equation*}%\label{pro-li-d1}
		d_*(1+t)^{-\frac{3}{4}}\leq \|(u,v)(t)\|_{L^2}\leq C(1+t)^{-\frac{3}{4}}.
	\end{equation*}
Moreover, we obtain
	\begin{equation*}%\label{032801}
		\bar{d}_*(1+t)^{-\frac{7}{4}}\leq \|(u-v)(t)\|_{L^2}\leq C(1+t)^{-\frac{7}{4}},
	\end{equation*}
	where $C$, $d_*$ and $\bar{d}_*$ are positive constants independent of time.
\end{prop}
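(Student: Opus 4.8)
The plan is to read everything off the explicit Fourier representations \eqref{Main3} of the solution together with the eigenvalue asymptotics of Lemma \ref{THZ4}. I will split the frequency space into the low-frequency ball $|\xi|<r_0$, where $\lambda_3=-\tfrac12|\xi|^2+O(|\xi|^4)$, $\lambda_4=-2-\tfrac12|\xi|^2+O(|\xi|^4)$ and $\mathrm{Re}\,\lambda_{1,2}=-\tfrac12+O(|\xi|^2)$, and the region $|\xi|\ge r_0$, where $\mathrm{Re}\,\lambda_j\le -R$ for $j=1,2,3,4$, so that on that region every term in \eqref{Main3} contributes at most $Ce^{-Rt}$ in $L^2$ (the singular factor $|\xi|^{-1}$ multiplying $\hat n_0$ in $\hat u$ being absorbed by $\|n_0\|_{\dot H^{-1}}$ on $|\xi|<r_0$ and by $\|n_0\|_{H^1}$ on $|\xi|\ge r_0$). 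I will also use $\lambda_3\ge-\tfrac12|\xi|^2$ for all $\xi$ and $\lambda_3-\lambda_4=\sqrt{4+|\xi|^4}\ge 2$.

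\textbf{Upper bounds.} These are immediate from Proposition \ref{lemL1} with $g=\eta_0=(n_0,u_0,v_0)^t$: the entries $S_{1i}$ and $S_{21}$ of the Green function contribute only exponentially decaying terms, while $S_{ij}$ with $i,j\in\{2,3\}$ contribute $C(1+t)^{-3/4}\|\cdot\|_{L^1}+Ce^{-Rt}\|\cdot\|_{L^2}$, hence $\|(u,v)(t)\|_{L^2}\le C(1+t)^{-3/4}$ using $(u_0,v_0)\in L^1\cap H^3$. For the faster rate of $u-v$ I subtract the two identities in \eqref{Main3}: a short computation gives the coefficient of $\hat v_0$ in $\widehat{u-v}$ as $\dfrac{-\lambda_3e^{\lambda_3 t}+\lambda_4e^{\lambda_4 t}}{\lambda_3-\lambda_4}$ and the coefficient of $\bigl(I-\tfrac{\xi\xi^t}{|\xi|^2}\bigr)\hat u_0$ as $\dfrac{(\lambda_3+2)e^{\lambda_4 t}-(\lambda_4+2)e^{\lambda_3 t}}{\lambda_3-\lambda_4}$, while the $\hat n_0$- and $\tfrac{\xi\xi^t}{|\xi|^2}\hat u_0$-terms carry only $e^{\lambda_{1,2}t}$. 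Since $\lambda_3=O(|\xi|^2)$ and $\lambda_4+2=O(|\xi|^2)$, on $|\xi|<r_0$ each of these two coefficients equals $\tfrac{|\xi|^2}{4}e^{\lambda_3 t}$ up to an $O(|\xi|^4e^{\lambda_3 t})$ remainder and an exponentially small $e^{\lambda_4 t}$-piece. Thus $\widehat{u-v}$ gains a factor $|\xi|^2$ at low frequency — the derivative "borrowed" from the opposite signs of the drag force — and $\bigl\||\xi|^2e^{\lambda_3 t}\bigr\|_{L^2(|\xi|<r_0)}\lesssim(1+t)^{-7/4}$, together with the exponential bounds on $|\xi|\ge r_0$, yields $\|(u-v)(t)\|_{L^2}\le C(1+t)^{-7/4}$.

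\textbf{Lower bounds.} Write $w_0(\xi):=\hat v_0(\xi)+\bigl(I-\tfrac{\xi\xi^t}{|\xi|^2}\bigr)\hat u_0(\xi)$. The same computation shows that on $|\xi|<r_0$ both $\hat u$ and $\hat v$ have leading part $\tfrac12 e^{\lambda_3 t}w_0(\xi)$ and $\widehat{u-v}$ has leading part $\tfrac{|\xi|^2}{4}e^{\lambda_3 t}w_0(\xi)$, with remainders controlled in $L^2$ by $C(1+t)^{-7/4}$ and $C(1+t)^{-11/4}$ respectively, plus exponentially small terms governed by $\|n_0\|_{\dot H^{-1}}+\|(n_0,u_0,v_0)\|_{H^1}$. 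By hypothesis \eqref{in-data-optimal}, $|w_0(\xi)|\ge\alpha_0$ for $|\xi|<r_0$, so Plancherel's theorem and the reverse triangle inequality give, for $t$ large,
\[
\|v(t)\|_{L^2}^2\ \ge\ \frac{\alpha_0^2}{4}\int_{|\xi|<r_0}e^{2\lambda_3 t}\,d\xi-C(1+t)^{-7/2}\ \ge\ c\,\alpha_0^2\,(1+t)^{-3/2},
\]
using $\int_{|\xi|<r_0}e^{2\lambda_3 t}\,d\xi\ge\int_{|\xi|<r_0}e^{-|\xi|^2 t}\,d\xi\ge c(1+t)^{-3/2}$; hence $\|(u,v)(t)\|_{L^2}\ge\|v(t)\|_{L^2}\ge d_*(1+t)^{-3/4}$. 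Repeating the argument with $\tfrac{|\xi|^2}{4}e^{\lambda_3 t}w_0$ in place of $\tfrac12 e^{\lambda_3 t}w_0$, and with $\int_{|\xi|<r_0}|\xi|^4e^{-|\xi|^2 t}\,d\xi\gtrsim(1+t)^{-7/2}$, gives $\|(u-v)(t)\|_{L^2}\ge\bar d_*(1+t)^{-7/4}$.

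\textbf{Main obstacle.} The delicate part is the low-frequency bookkeeping: one must verify the precise cancellation producing the factor $|\xi|^2$ in $\widehat{u-v}$ (so its leading term is genuinely $\tfrac{|\xi|^2}{4}e^{\lambda_3 t}w_0$ and not an $O(1)$ multiple of $e^{\lambda_3 t}$), and at the same time show that every discarded contribution — the $O(|\xi|^4)$ corrections to the $e^{\lambda_3 t}$-coefficients, the $e^{\lambda_4 t}$-pieces, and especially the $\hat n_0$-term whose coefficient $\tfrac{1+|\xi|^2}{|\xi|^2}i\xi$ is singular as $\xi\to 0$ — decays strictly faster than the main term, so that the latter survives in the lower bound. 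The $\dot H^{-1}$-regularity of $n_0$ is exactly what tames the singular $\hat n_0$-coefficient, and the smallness of $r_0$ is what makes the eigenvalue expansions and the estimate $|w_0|\ge\alpha_0$ valid over the entire integration domain.
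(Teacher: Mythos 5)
Your proposal is correct and follows essentially the same route as the paper: upper bounds read off from Proposition \ref{lemL1}, and lower bounds obtained by isolating the $e^{\lambda_3 t}$-coefficients in \eqref{Main3} (your $\tfrac12 e^{\lambda_3 t}w_0$ and $\tfrac{|\xi|^2}{4}e^{\lambda_3 t}w_0$ match the paper's $I_1$, $I_3$, $I_6$), using \eqref{in-data-optimal} to bound $|w_0|$ from below on $|\xi|<r_0$, and absorbing the singular $\hat n_0$ piece with $\|n_0\|_{\dot H^{-1}}$. One inconsequential slip: in the squared lower bound the cross term is $2\|L\|_{L^2}\|R\|_{L^2}\sim(1+t)^{-5/2}$, not $(1+t)^{-7/2}$, but this still decays faster than $(1+t)^{-3/2}$ so the conclusion is unaffected; likewise $\mathrm{Re}\,\lambda_{1,2}=-\tfrac12$ exactly, not merely up to $O(|\xi|^2)$.
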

\begin{proof}
Following Proposition \ref{lemL1}, we establish the upper bound estimates for the velocities $(u,v)$ as:
	\begin{equation*}%\label{UT1}
		\|(u,v)(t)\|_{L^2}\leq C(1+t)^{-\frac{3}{4}} \quad \mbox{and} \quad  \|(u-v)(t)\|_{L^2}\leq C(1+t)^{-\frac{7}{4}}.
	\end{equation*}
For the estimates of the lower bound of the time decay rates, by using the expression \eqref{Main3}, we divide $\hat{u}$ into two terms:
	\begin{equation*}%\label{pro-li-d2}
		\begin{aligned}
			\hat{u}(t,\xi) = I_1+I_2,
		\end{aligned}
	\end{equation*}
where
\[
I_1 \triangleq \frac{e^{\lambda_3 t}}{\lambda_3-\lambda_4} \hat{v}_0-\frac{(\lambda_4+1)e^{\lambda_3t}}{\lambda_3-\lambda_4}\big(I-\frac{\xi\xi^t}{|\xi|^2}\big)\hat{u}_0
\]
and
\[
I_2  \triangleq -\frac{1+|\xi|^2}{|\xi|^2} \frac{e^{\lambda_1 t}-e^{\lambda_2 t}}{\lambda_1-\lambda_2} i \xi \hat{n}_0 +\Big[\frac{(\lambda_3+1)e^{\lambda_4t}}{\lambda_3-\lambda_4}\big(I-\frac{\xi\xi^t}{|\xi|^2}\big)+\frac{\lambda_1e^{\lambda_1t}-\lambda_2e^{\lambda_2t}}{\lambda_1-\lambda_2}\frac{\xi\xi^t}{|\xi|^2}\Big]\hat{u}_0-\frac{e^{\lambda_4 t}}{\lambda_3-\lambda_4} \hat{v}_0.
\]	
We then use the assumption \eqref{in-data-optimal} to deduce
	\begin{align*}
		\|I_1\|_{L^2}^2
		&=\int_{\mathbb{R}^3}\Big|\frac{e^{\lambda_3t}}{\lambda_3-\lambda_4}\Big|^2\Big|\hat{v}_0-(\lambda_4+1)\Big(I-\frac{\xi\xi^t}{|\xi|^2}\Big)\hat{u}_0\Big|^2d\xi\\
		&\geq\int_{|\xi|<r_0}\Big|\frac{e^{\lambda_3t}}{\lambda_3-\lambda_4}\Big|^2\Big|\hat{v}_0-(\lambda_4+1)\Big(I-\frac{\xi\xi^t}{|\xi|^2}\Big)\hat{u}_0\Big|^2d\xi\\
		&\geq \frac{1}{4}\int_{|\xi|<r_0}e^{-|\xi|^2t}\Big|\Big(I-\frac{\xi\xi^t}{|\xi|^2}\Big)\hat{u}_0+\hat{v}_0\Big|^2d\xi\\
		&\geq \frac{1}{4}\alpha_0^2\int_{|\xi|<r_0}e^{-|\xi|^2t}d\xi\\
		&\geq C_9^2\alpha_0^2(1+t)^{-\frac{3}{2}},
	\end{align*}
	where $C_9$ is a small positive constant independent of time. As for $I_2$, it can be bounded by 
	$$
	\|I_2\|_{L^2}\leq  Ce^{-Rt}.
	$$
	Thus, for large $t$, we obtain
	\begin{equation}\label{LT3}
		\|u(t)\|_{L^2}=\|\hat{u}(t)\|_{L^2}\geq \|I_1\|_{L^2}-\|I_2\|_{L^2}
		\geq C_9\alpha_0(1+t)^{-\frac{3}{4}}-Ce^{-Ct}\geq \frac{1}{2}C_9\alpha_0(1+t)^{-\frac{3}{4}}.
	\end{equation}
	
Similarly, we decompose $\hat{v}(t,\xi)$ as:	
	\begin{equation*}
		\begin{aligned}
			\hat{v}(t,\xi)
			&=\Big\{\frac{e^{\lambda_3t}}{\lambda_3-\lambda_4} \Big(I-\frac{\xi\xi^t}{|\xi|^2}\Big)\hat{u}_0(\xi)+\frac{e^{\lambda_3t}}{\lambda_3-\lambda_4}\hat{v}_0(\xi)\Big\}\\
			&\quad +\Big\{-\frac{e^{\lambda_4t}}{\lambda_3-\lambda_4}\Big(I-\frac{\xi\xi^t}{|\xi|^2}\Big)\hat{u}_0(\xi)+\big[ \frac{\lambda_3e^{\lambda_3t}-(\lambda_4+1)e^{\lambda_4t}}{\lambda_3-\lambda_4}\big]\hat{v}_0(\xi)\Big\}\\
			&\triangleq I_3+I_4,
			\end{aligned}
	\end{equation*}
where
	$$
	\begin{aligned}
	\|I_3\|_{L^2}^2&=\int_{\mathbb{R}^3}\Big| \frac{e^{\lambda_3t}}{\lambda_3-\lambda_4}\Big|^2 \Big| \Big(I-\frac{\xi\xi^t}{|\xi|^2}\Big)\hat{u}_0+\hat{v}_0\Big|^2d\xi\\
	&\geq \int_{|\xi|<r_0}\Big| \frac{e^{\lambda_3t}}{\lambda_3-\lambda_4}\Big|^2\Big| \Big(I-\frac{\xi\xi^t}{|\xi|^2}\Big)\hat{u}_0+\hat{v}_0\Big|^2d\xi\\
	&\geq \frac{1}{4}\alpha_0^2\int_{|\xi|<r_0}e^{-|\xi|^2t}d\xi\\
	&\geq C_9^2\alpha_0^2(1+t)^{-\frac{3}{2}},
	\end{aligned}
	$$
	due to \eqref{in-data-optimal}. On the other hand, it follows from Lemma \ref{THZ4} and direct calculations that 
	$$
	\|I_4\|_{L^2}\leq C(1+t)^{-\frac{7}{4}}+Ce^{-Rt}\leq C(1+t)^{-\frac{7}{4}}.
	$$
Consequently, for large time $t > 0$, we deduce 
	\begin{equation}\label{LT2}
			\|v(t)\|_{L^2}\geq \|I_3\|_{L^2}-\|I_4\|_{L^2}\geq \frac{1}{2}C_9\alpha_0(1+t)^{-\frac{3}{4}}.
	\end{equation}
 Combining \eqref{LT3} and \eqref{LT2} yields  
	\begin{equation*}
	d_*(1+t)^{-\frac{3}{4}}\leq \|(u,v)(t)\|_{L^2}\leq C(1+t)^{-\frac{3}{4}},
	\end{equation*}
	where $C$  and $d_*=\frac{1}{2}C_9\alpha_0$ are positive constants independent of time. 
	
For the decay estimate of the difference of velocities $u-v$, we consider
\[
	\hat{u}-\hat{v} =  I_5+I_6,
\]
where
\[
I_5 \triangleq -\frac{1+|\xi|^2}{|\xi|^2} \frac{e^{\lambda_1 t}-e^{\lambda_2 t}}{\lambda_1-\lambda_2} i \xi  \hat{n}_0 +\frac{\lambda_1 e^{\lambda_1 t}-\lambda_2 e^{\lambda_2 t}}{\lambda_1-\lambda_2} \frac{\xi \xi^t}{|\xi|^2}\hat{u}_0+\frac{(\lambda_3+2)e^{\lambda_4t}}{\lambda_3-\lambda_4}\left(I-\frac{\xi \xi^ t}{|\xi|^2}\right)\hat{u}_0+\frac{\lambda_4e^{\lambda_4t}}{\lambda_3-\lambda_4}\hat{v}_0
\]
and
\[
I_6 \triangleq -\frac{(\lambda_4+2) e^{\lambda_3 t}}{\lambda_3-\lambda_4}\left(I-\frac{\xi \xi^ t}{|\xi|^2}\right)\hat{u}_0-\frac{\lambda_3e^{\lambda_3t}}{\lambda_3-\lambda_4}\hat{v}_0.
\]
Then, by Lemma \ref{THZ4}, we have 
\[
\|I_6\|\leq C\left\| |\xi|^2e^{-|\xi|^2t} \right\|_{L^2}+Ce^{-Rt}\leq C(1+t)^{-\frac{7}{4}}\quad \mbox{and} \quad \|I_5\|\leq Ce^{-Rt},
\]
which implies 
\[
\|(u-v)(t)\|_{L^2}=\|(\hat{u}-\hat{v})(t)\|_{L^2}\leq \|I_5\|_{L^2}+\|I_6\|_{L^2}\leq C(1+t)^{-\frac{7}{4}}.
\]
On the other hand, by Proposition \ref{THZ4} and the assumption \eqref{in-data-optimal}, we find
\begin{align*}
	\|I_6\|_{L^2}^2&\geq \int_{|\xi|<r_0}
	\left|-\frac{(\lambda_4+2) e^{\lambda_3 t}}{\lambda_3-\lambda_4}\Big(I-\frac{\xi \xi^ t}{|\xi|^2}\Big)\hat{u}_0-\frac{\lambda_3e^{\lambda_3t}}{\lambda_3-\lambda_4}\hat{v}_0(\xi)\right|^2d\xi\\
	&\geq \int_{|\xi|< r_0}
	\Big|\frac{1}{4}|\xi|^2e^{\lambda_3t}\Big|^2\left|\Big(I-\frac{\xi \xi^ t}{|\xi|^2}\Big)\hat{u}_0+\hat{v}_0\right|^2d\xi\\
   &\geq \frac{1}{16}\alpha_0^2 \int_{|\xi|<r_0}
    |\xi|^4e^{-|\xi|^2t}d\xi\\
	&\geq C_{10}^2\alpha_0^2(1+t)^{-\frac{7}{2}},
\end{align*}
where $C_{10}$ denotes a small positive constant independent of time. Thus, we obtain
$$
\|u-v\|_{L^2}=\|\hat{u}-\hat{v}\|_{L^2}\geq \|I_6\|_{L^2}-\|I_5\|_{L^2}
\geq C_{10}\alpha_0(1+t)^{-\frac{7}{4}}-Ce^{-Ct}\geq \frac{1}{2}C_{10}\alpha_0(1+t)^{-\frac{7}{4}}.
$$
Choosing $\bar{d}_*=\frac{1}{2}C_{10}\alpha_0$, we conclude
\begin{align*}
\bar{d}_*(1+t)^{-\frac{7}{4}}\leq 	\|(u-v)(t)\|_{L^2}\leq C(1+t)^{-\frac{7}{4}}.
	\end{align*}
	Thus we complete the proof.
\end{proof}

%%%%%%%%%%%%%%%%%%%%%%%%%%%%%%%%%%%%%%%%%%
%
%
%
%
%
%	
%%%%%%%%%%%%%%%%%%%%%%%%%%%%%%%%%%%%%%%%%%
	\subsection{Proof of Theorem \ref{thm1}: Large-time behavior estimates}\label{Sec4}
In this subsection, we provide the upper bound of the decay rates of solutions to the EP-NS system \eqref{Main2} proving the large-time behavior estimates stated in Theorem \ref{thm1}. We consider the following nonlinear system:
\begin{equation*}
	\left\{\begin{array}{l}
		\partial_t \eta(t)+\mathcal{L}\eta(t)=F, \\
		\eta|_{t=0}=\eta_0,
	\end{array}\right.
\end{equation*}
with $\eta=(n,u,v)^t$, $\mathcal{L}$ appeared in \eqref{S} ,and $F=(f_1,f_2,f_3)^t$, where $f_i$, $i=1,2,3$ are given as in \eqref{nonlinearterms}. By Duhamel's principle, we have
\begin{equation*}
		\eta(t)=  S(t) * \eta_0+\int_0^t S(t-s) * F(s) \,ds, 
\end{equation*}
where $S=e^{-t\mathcal{L}}$. From this, we obtain
\begin{equation}\label{eqn_nuv}
	\begin{aligned}
		n(t)&=  S_{11} * n_0+S_{12} * u_0 \\
		&\quad +\int_0^t S_{11}(t-s) * f_1(s) \,ds+\int_0^t S_{12}(t-s) * f_2(s) \,ds ,\\
		u(t)&=  S_{21} * n_0+S_{22} * u_0+S_{23} * v_0 \\
		& \quad +\int_0^t S_{21}(t-s) * f_1(s) \,ds+\int_0^t S_{22}(t-s)* f_2(s) \,ds+\int_0^t S_{23}(t-s) * f_3(s) \,ds,\\
		v(t)&=  S_{32} * u_0+S_{33} * v_0+\int_0^t S_{32}(t-s)* f_2(s) \,ds+\int_0^t S_{33}(t-s) * f_3(s) \,ds.
	\end{aligned}
\end{equation}
We then recall the time-weighted energy function defined as
\[%\begin{align}\label{011704}
\begin{aligned}
M(t)&= \sup _{0 \leq s \leq t} \Big\{\sum_{k=0}^2(1+s)^{\frac{11}{4}+\frac{k}{2}}\left\|\nabla^{k}n(s)\right\|_{L^2}
+(1+s)^{\frac{15}{4}}\|\nabla^3n(s)\|_{L^2}\\
&\hspace{7cm}+\sum_{k=0}^3(1+s)^{\frac{3}{4}+\frac{k}{2}}\left\|\nabla^{k}(u, v)(s)\right\|_{L^2}\Big\}.
\end{aligned}
\]%\end{align}
It's straightforward to verify that
\begin{equation}\label{MT}
\begin{aligned}
 \left\|\nabla^{k}n(t)\right\|_{L^2}&\leq(1+t)^{-\frac{11}{4}-\frac{k}{2}} M(t), \quad k=0,1,2, \\
	 \left\|\nabla^{3}n(t)\right\|_{L^2}&\leq(1+t)^{-\frac{15}{4}} M(t),\\
 \left\|\nabla^{k}(u, v)(t)\right\|_{L^2}&\leq(1+t)^{-\frac{3}{4}-\frac{k}{2}} M(t),\quad k=0,1,2,3. 
\end{aligned}
\end{equation}
Our next objective is to show that $M(t)$ remains uniformly bounded over time. More precisely, we prove the following proposition, which corresponds to the large-time behavior estimates of solutions stated in Theorem \ref{thm1}.
\begin{prop}\label{lemA1}
	 Let $(n,u,v)$ be the classical solutions of system \eqref{main4}. Then it holds that
	\begin{equation}\label{lemA1-1}
		\begin{aligned}
\left\|n(t) \right\|_{\dot{H}^{-1}}&\leq C\delta_0(1+t)^{-2},\\ 			
 \left\|\nabla^{k}n(t)\right\|_{L^2}&\leq C\delta_0(1+t)^{-\frac{11}{4}-\frac{k}{2}}, &&k=0,1,2, \\
\left\|\nabla^{3}n(t)\right\|_{L^2}&\leq C \delta_0(1+t)^{-\frac{15}{4}},\\
\left\|\nabla^{k}(u, v)(t)\right\|_{L^2}&\leq C\delta_0(1+t)^{-\frac{3}{4}-\frac{k}{2}},&& k=0,1,2,3,\\
	\|\nabla^k(u-v)(t)\|_{L^2}&\leq C\delta_0(1+t)^{-\frac{7}{4}-\frac{k}{2}}, &&k=0,1,
\end{aligned}
	\end{equation}
where $C$ is a positive constant independent of time.
\end{prop}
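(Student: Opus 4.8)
The plan is to run a continuity (bootstrap) argument on the time‑weighted quantity $M(t)$: assuming $M(t)<\infty$ on $[0,T]$, I will establish the self‑improving bound $M(t)\le C(\delta_0+M(t)^{3/2}+M(t)^2)$, after which the smallness of $\delta_0$ and the continuity of $M$ force $M(t)\le C\delta_0$ for all $t$, and the estimates in \eqref{lemA1-1} follow from \eqref{MT} (those for $u-v$ being obtained separately at the end). The backbone is Duhamel's formula \eqref{eqn_nuv} together with the Green‑function bounds of Proposition \ref{lemL1}: the entries $S_{1i}$ ($i=1,2$), $S_{21}$, and the curl‑free part of $S_{22}$ relax exponentially in time (this encodes the damping/electric‑field mechanism, reflected in ${\rm Re}\,\lambda_{1,2}=-\tfrac12$ from Lemma \ref{THZ4}), while the remaining directions of $(S_{ij})_{i,j=2,3}$ behave like the heat semigroup, producing the slow rate $(1+t)^{-3/4-k/2}\|\cdot\|_{L^1}$. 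A key structural point is that the heat‑like (divergence‑free) projection annihilates the curl‑free quadratic electric nonlinearity $\nabla(-\Delta)^{-1}(e^{n}-1-n)$ in $f_2$, so this term only feeds exponentially decaying modes and is harmless; another is that $\|\nabla^k f_2\|_{L^2}$ is controlled to leading order by $\|\nabla^k(u\cdot\nabla u)\|_{L^2}\lesssim(1+s)^{-11/4-k/2}M(t)^2$, which is exactly why the density inherits the rate of $u\cdot\nabla u$. Throughout, $f_1,f_2,f_3$ of \eqref{nonlinearterms} are estimated by Gagliardo--Nirenberg and Hölder, using \eqref{MT}.

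\emph{Low-order derivatives.} For $\|n(t)\|_{\dot H^{-1}}$ and $\|\nabla^k n(t)\|_{L^2}$ with $k\le 2$, I apply $\Lambda^{-1}$, resp.\ $\nabla^k$, to the $n$-line of \eqref{eqn_nuv} and use the exponential bounds for $S_{11},S_{12}$: the linear part is $O(e^{-Rt}\delta_0)$, and the nonlinear part is $\lesssim\int_0^t e^{-R(t-s)}\|\nabla^k(f_1,f_2)(s)\|_{L^2}\,ds$ (for the $\dot H^{-1}$ norm, $\lesssim\int_0^t e^{-R(t-s)}\|(f_1,f_2)(s)\|_{\dot H^{-1}}\,ds$, using the divergence form $u\cdot\nabla n=\text{div}(nu)-n\,\text{div}\,u$ and $L^{6/5}\hookrightarrow\dot H^{-1}$). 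Since $\|\nabla^k f_1\|_{L^2}$ decays very fast, $\|\nabla^k f_2\|_{L^2}\lesssim(1+s)^{-11/4-k/2}M(t)^2$, and the exponential kernel localises the convolution near $s=t$, this yields $\|n(t)\|_{\dot H^{-1}}\lesssim(1+t)^{-2}(\delta_0+M(t)^2)$ and $\|\nabla^k n(t)\|_{L^2}\lesssim(1+t)^{-11/4-k/2}(\delta_0+M(t)^2)$, $k=0,1,2$. For $\|\nabla^k(u,v)(t)\|_{L^2}$ with $k\le 2$, I use the heat‑type bound for the $(2,3)$-block: the linear part is $\lesssim(1+t)^{-3/4-k/2}\|(u_0,v_0)\|_{L^1}\le\delta_0(1+t)^{-3/4-k/2}$, and for the nonlinear part I split $\int_0^t=\int_0^{t/2}+\int_{t/2}^t$, using $\|(f_2,f_3)(s)\|_{L^1}\lesssim(1+s)^{-2}M(t)^2$ on $[0,t/2]$ and the fast $L^2$-bound for $\|\nabla^k(f_2,f_3)\|_{L^2}$ plus the exponential tail on $[t/2,t]$; the usual convolution lemma gives $\|\nabla^k(u,v)(t)\|_{L^2}\lesssim(1+t)^{-3/4-k/2}(\delta_0+M(t)^2)$, $k=0,1,2$.

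\emph{Top-order derivatives.} Duhamel's formula cannot reach $k=3$ directly, since $\nabla^3 f_2=\nabla^3(u\cdot\nabla u)+\cdots$ would require $\nabla^4u\notin L^2$. I circumvent this by the low-high split $n=n^\ell+n^h$ (and likewise for $u,v$). For the low-frequency parts I invoke the low-frequency Green function $S^{\ell}$ of Proposition \ref{lemL1}: being supported in $\{|\xi|\le R_0\}$, a factor $|\xi|^3$ costs only the constant $R_0^3$, so one may bound $\|\nabla^3 S_{1i}^{\ell}*f_i\|_{L^2}\lesssim e^{-Rt}\|\nabla^2 f_i\|_{L^2}$ and $\|\nabla^3 S_{ij}^{\ell}*f_j\|_{L^2}\lesssim(1+t)^{-9/4}\|f_j\|_{L^1}$ as well as $\lesssim(1+t)^{-3/2}\|f_j\|_{L^2}$, \emph{without} differentiating $f$ three times. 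Splitting the time integral as before — the $L^1$-bound on $[0,t/2]$, the fast $L^2$-bound $\|(f_2,f_3)(s)\|_{L^2}\lesssim(1+s)^{-11/4}M(t)^2$ on $[t/2,t]$ — then gives $\|\nabla^3 n^{\ell}(t)\|_{L^2}\lesssim(1+t)^{-15/4}(\delta_0+M(t)^2)$ and $\|\nabla^3(u^{\ell},v^{\ell})(t)\|_{L^2}\lesssim(1+t)^{-9/4}(\delta_0+M(t)^2)$. For the high-frequency parts I project \eqref{main4} by $\mathcal K_\infty$ and run the energy/hypocoercivity scheme of Section \ref{ssec_apriori} at order $\nabla^3$ on $(n^h,u^h,v^h)$: the high-frequency Bernstein inequality of Lemma \ref{lema6} ($\|\nabla^2 f^h\|_{L^2}\lesssim\|\nabla^3 f^h\|_{L^2}$) upgrades the dissipation into coercivity for $\|\nabla^3(n^h,u^h,v^h)\|_{L^2}^2$, and the nonlinear forcing is controlled with the commutator Lemma \ref{lema2}, so no derivative loss occurs; a time-weighted Grönwall argument (multiplying by $(1+t)^{\alpha}$ with $\alpha$ large and absorbing the lower-order rates already obtained) then gives $\|\nabla^3(n^h,u^h,v^h)(t)\|_{L^2}\lesssim(1+t)^{-15/4}(\delta_0+M(t)^{3/2}+M(t)^2)$. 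Summing the low- and high-frequency pieces and combining with the previous step yields $M(t)\lesssim\delta_0+M(t)^{3/2}+M(t)^2$, which closes the bootstrap.

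\emph{The difference $u-v$, and the main obstacle.} Once $M(t)\lesssim\delta_0$ is established, the improved rate \eqref{040101} follows from the equation for $u-v$ obtained by subtracting the $v$-line from the $u$-line of \eqref{main4}: in it the drag term acts as a damping $+\,(u-v)$, while every other term — $-\Delta v$, $\nabla n$, $\nabla(-\Delta)^{-1}n$, the quadratic $f_2-f_3$, and a correction expressible through the continuity equation — either decays strictly faster than $(1+t)^{-7/4-k/2}$ or, in the sole case of $-\Delta v$, decays exactly at that rate (since $\|\nabla^{k+2}v(s)\|_{L^2}\lesssim\delta_0(1+s)^{-7/4-k/2}$); hence Duhamel in time and the exponential relaxation give $\|\nabla^k(u-v)(t)\|_{L^2}\lesssim\delta_0(1+t)^{-7/4-k/2}$ for $k=0,1$. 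I expect the main difficulty to lie in the top-order step: making the low/high split precise enough that the sharp rate $(1+t)^{-9/4}$ for $\nabla^3(u,v)$ survives even though $\|(f_2,f_3)\|_{L^1}$ decays only like $(1+t)^{-2}$ — which forces the combined use of the $L^1$- and $L^2$-type Green-function bounds across the split of the time integral — together with setting up the high-frequency hypocoercive functional so that its dissipation genuinely dominates $\|\nabla^3(n^h,u^h,v^h)\|_{L^2}^2$ while the cubic commutator terms stay at the level $M(t)^{3/2}+M(t)^2$.
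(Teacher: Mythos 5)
Your proposal follows essentially the same route as the paper: Duhamel with Proposition \ref{lemL1} and the $M(t)$ bootstrap for orders $k\le2$; the low/high split at order $3$, with the free derivative on $S^\ell$ and a hypocoercive $\nabla^3$-energy estimate on the high-frequency projected system; $M(t)\lesssim\delta_0+M(t)^{3/2}+M(t)^2$ closes; and $u-v$ is handled afterwards from its own evolution equation, with $\|n\|_{\dot H^{-1}}$ obtained by applying $\nabla(-\Delta)^{-1}$ to the Duhamel formula. The observations you flag as structural (the Leray-type projection annihilating the curl-free electric quadratic, the one free derivative on $S^\ell$, the Bernstein-driven coercivity at high frequency, and the $M^{3/2}$ coming from the cubic commutators) are exactly the mechanisms the paper relies on.

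One small caveat: for $u-v$ you subtract the $\mathbb{P}$-projected system \eqref{main4}, which produces $\partial_t(u-v)+2(u-v)+\nabla(-\Delta)^{-1}\mathrm{div}\,u+\cdots$, and the extra term $\nabla(-\Delta)^{-1}\mathrm{div}\,u$ brings in $\partial_t n$ through the continuity equation (which is what your ``correction expressible through the continuity equation'' refers to); this is resolvable by absorbing $\partial_t\nabla(-\Delta)^{-1}n$ into the time derivative and using the already-established $\dot H^{-1}$-decay of $n$, but it is an extra step. The paper instead subtracts the unprojected $v$-equation $\partial_t v+v\cdot\nabla v+\nabla P-u+v=\Delta v+(\rho-1)(u-v)$, which yields the clean damping coefficient $2$ at once and replaces the Leray correction by a $\nabla P$ that is estimated directly from the elliptic pressure equation.
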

\begin{proof}
We first show the decay estimates of density. By Proposition \ref{lemL1}, we deduce
\begin{equation}\label{011703}
	\begin{aligned}	
		&\left\|\nabla^{k}n(t)\right\|_{L^2} \cr
		&\quad \leq\left\|\nabla^{k} S_{11} * n_0\right\|_{L^2}+\left\|\nabla^{k}S_{12} * u_0\right\|_{L^2}\\
		 &\qquad +\int_0^t\left\|\nabla^{k} S_{11}(t-s) * f_1(s)\right\|_{L^2} ds+\int_0^t\left\|\nabla^k S_{12}(t-s) * f_2(s)\right\|_{L^2}  ds \\
		&\quad \leq Ce^{-R t}\left(\left\|\nabla^{k} n_0\right\|_{L^2}+\left\|\nabla^{k} u_0\right\|_{L^2}\right)+C\int_0^t e^{-R(t-s)}\left\|\nabla^{k}\left(f_1, f_2\right)(s)\right\|_{L^2} ds \\
		&\quad \leq Ce^{-Rt}\left(\left\|\nabla^{k} n_0\right\|_{L^2}+\left\|\nabla^{k} u_0\right\|_{L^2}\right)+C\int_0^t e^{-R(t-s)}\Big\{\left\|\nabla^{k}(u \cdot \nabla n)\right\|_{L^2}+\left\|\nabla^{k}(u \cdot \nabla u)\right\|_{L^2}\\
	 	&\hspace{7.5cm} +\left\|\nabla^{k}(\nabla (-\Delta)^{-1}(e^n-1-n))\right\|_{L^2}\Big\}\,ds.
	\end{aligned}
\end{equation}
$\bullet$  For $k=0$, we obtain
\begin{equation}\label{011701}
	\begin{aligned}
		&\int_0^t e^{-R(t-s)}(\left\|u \cdot \nabla n\right\|_{L^2}+\|u \cdot \nabla u\|_{L^2}) \,ds \\
		&\quad\leq  \int_0^t e^{-R(t-s)}(\|u\|_{L^\infty}\|\nabla n\|_{L^2}+\|u\|_{L^\infty}\|\nabla u\|_{L^2}) \,ds \\
		&\quad\leq CM(t)^2\int_0^t e^{-R(t-s)}\Big\{(1+s)^{-\frac{3}{2}}(1+s)^{-\frac{13}{4}} +(1+s)^{-\frac{3}{2}}(1+s)^{-\frac{5}{4}} \Big\}\,ds \\
		&\quad\leq C(1+t)^{-\frac{11}{4}} M(t)^2.
	\end{aligned}
\end{equation}
Letting $Z(x,t)\triangleq e^n-1-n$, we also estimate
\begin{equation}\label{011702}
	\begin{aligned}
		\int_0^t e^{-R(t-s)}\left\|\nabla(-\Delta)^{-1}\left(e^n-1-n\right)\right\|_{L^2}  ds
			& =\int_0^t e^{-R(t-s)}\|\nabla K * Z\|_{L^2} \,ds \\
			& \leq\int_0^t e^{-R(t-s)}\left(\left\|\nabla K^\ell * Z\right\|_{L^2}+\left\|\nabla K^h * Z\right\|_{L^2}\right)  ds \\
			& \leq C\int_0^t e^{-R(t-s)}\left(\left\|\nabla K^\ell\right\|_{L^2}\|Z\|_{L^1}+\|Z\|_{L^2}\right)  ds \\
			& \leq C\int_0^t e^{-R(t-s)}\left(\|n\|_{L^2}^2+\|n\|_{L^{\infty}}\|n\|_{L^2}\right)  ds \\
			& \leq C M(t)^2 \int_0^t e^{-R(t-s)}\Big((1+s)^{-\frac{11}{2}}+(1+s)^{-\frac{25}{4}}\Big) \,ds\\
			 &\leq C(1+t)^{-\frac{11}{2}} M(t)^2,
		\end{aligned}
\end{equation}
where $K$ is the fundamental solution of the Laplace equation and we used $\|\nabla K^\ell\|_{L^2}\leq C$ and the fact that  $|\widehat{\nabla K^h}|\leq C$. Then we substitute \eqref{011701} and \eqref{011702} into \eqref{011703} to get the time decay rates for $k=0$ that
$$
	\|n(t)\|_{L^2} \leq Ce^{-R t} \delta_0+C(1+t)^{-\frac{11}{4}} M(t)^2.
$$
$\bullet$  For $k=1,2$, we notice that
$$ 
\begin{aligned}
\|\nabla (u\cdot \nabla n)\|_{L^2} &\leq C( \| u \|_{L^\infty} \|\nabla^{2}n\|_{L^2}+\|\nabla u \|_{L^\infty} \|\nabla n\|_{L^2})\leq C(1+t)^{-\frac{21}{4}}M(t)^2,\\
 \|\nabla (u\cdot \nabla u)\|_{L^2}&\leq C(\|u \|_{L^\infty} \|\nabla^{2}u\|_{L^2}+\|\nabla u \|_{L^\infty} \|\nabla u\|_{L^2})\leq C(1+t)^{-\frac{13}{4}}M(t)^2,\\
 \|\nabla^{2} (u\cdot \nabla n)\|_{L^2}&\leq C( \| u \|_{L^\infty} \|\nabla^{3}n\|_{L^2}+\|\nabla u \|_{L^\infty} \|\nabla^{2} n\|_{L^2}+\|\nabla^2u\|_{L^2}\|\nabla n\|_{L^\infty})\leq C(1+t)^{-\frac{23}{4}}M(t)^2,\\
 \|\nabla^{2} (u\cdot \nabla u)\|_{L^2} &\leq C(\|\nabla u \|_{L^\infty} \|\nabla^{2}u\|_{L^2}+\| u \|_{L^\infty} \|\nabla^{3}u\|_{L^2})\leq C(1+t)^{-\frac{15}{4}}M(t)^2.
 \end{aligned}
 $$
 By using these estimates, we have the following estimate of derivatives of density:
\begin{equation*}%\label{011708}
	\|\nabla ^kn(t)\|_{L^2}  \leq Ce^{-R t} \delta_0+C(1+t)^{-\frac{11}{4}-\frac{k}{2}} M(t)^2,\quad k=0,1,2. 
\end{equation*}
It is worth noting that we cannot directly take the third derivatives on the nonlinear term $u\cdot\nabla u$ due to the constraints of our setting with $M(t)$. To this end, we decompose it into low-high frequency parts:
\begin{align*}
n= n^\ell+ n^h.
\end{align*}
For $\nabla^3 n^\ell$, we find
\begin{equation}\label{032004}
	\begin{aligned}	
		&\left\|\nabla^{3} n^\ell(t)\right\|_{L^2} \cr
		&\quad \leq\left\|\nabla^{3} S^\ell_{11} * n_0\right\|_{L^2}+\left\|\nabla^{3}S^\ell_{12} * u_0\right\|_{L^2}\\
		&\qquad +\int_0^t\left\|\nabla^{3} S^\ell_{11}(t-s) * f_1(s)\right\|_{L^2} \,ds+\int_0^t\left\|\nabla^{3} S^\ell_{12}(t-s) * f_2(s)\right\|_{L^2} ds \\
		&\quad\leq Ce^{-R t}\left(\left\| n_0\right\|_{L^2}+\left\| u_0\right\|_{L^2}\right)+C\int_0^t e^{-R(t-s)}\left\|\nabla^{2}\left(f_1, f_2\right)(s)\right\|_{L^2} ds \\
		&\quad\leq Ce^{-Rt}\left(\left\| n_0\right\|_{L^2}+\left\| u_0\right\|_{L^2}\right)+C\int_0^t e^{-R(t-s)}\Big\{\left\|\nabla^{2}(u \cdot \nabla n)\right\|_{L^2}+\left\|\nabla^{2}(u \cdot \nabla u)\right\|_{L^2}\\
		&\hspace{7.5cm} +\left\|\nabla^{2}(\nabla (-\Delta)^{-1}(e^n-1-n)) \right\|_{L^2}\Big\}\,ds.
	\end{aligned}
\end{equation}
Here we estimate
$$ 
	\|\nabla^{2} (u\cdot \nabla n)\|_{L^2}\leq C(1+t)^{-\frac{23}{4}}M(t)^2,\quad 
	\|\nabla^{2} (u\cdot \nabla u)\|_{L^2}\leq C(1+t)^{-\frac{15}{4}}M(t)^2,
$$
and 
\begin{equation*}
	\begin{aligned}
		\left\|\nabla^2 \nabla(-\Delta)^{-1}\left(e^n-1-n\right)\right\|_{L^2} & \leq\left\|\nabla^2 \nabla K *\left(e^n-1-n\right)\right\|_{L^2} \\
		& \leq C\left\|\nabla^2\left(e^n-1-n\right)\right\|_{L^1}+C\left\|\nabla^2\left(e^n-1-n\right)\right\|_{L^2} \\
		& \leq C\left(\| \nabla^2(n \cdot n) \|_{L^1}+\| \nabla^2(n \cdot n) \|_{L^2}\right) \\
		& \leq C(1+t)^{-\frac{13}{2}}M(t)^2.
	\end{aligned}
\end{equation*}
Substituting the above facts into \eqref{032004}, we have
\begin{equation}  \label{011707}
		\left\|\nabla^3 n^\ell(t)\right\|_2  \leq Ce^{-R t} \delta_0+C(1+t)^{-\frac{15}{4}} M(t)^2.
\end{equation}	

Now we move to the estimate of the decay rates for velocities. 
%Since the expression of velocity $u$ can be written as 
%\begin{equation*}
%	\begin{aligned}
%		u(t)&=  S_{21} * n_0+S_{22} * u_0+S_{23} * v_0 +\int_0^t S_{21}(t-s) * f_1(s) \,ds\\
%		&\quad +\int_0^t S_{22}(t-s)* f_2(s) \,ds+\int_0^t S_{23}(t-s) * f_3(s) \,ds.
%	\end{aligned}
%\end{equation*}
By using the fact
\[
\big(I-\frac{\xi\xi^t}{|\xi|^2}\big) \cdot \frac{i\xi}{|\xi|^2} =0
\]
and  Proposition \ref{lemL1}, we obtain from \eqref{eqn_nuv} that 
\begin{equation*}
	\begin{aligned}
		 \|u(t)\|_{L^2}
		 &\leq Ce^{-R t}(\|n_0\|_{\dot{H}^{-1}}+\|n_0\|_{L^2})
		 +C(1+t)^{-\frac{3}{4}}\|(u_0,v_0)\|_{L^1} +Ce^{-Rt}\|(u_0,v_0)\|_{L^2}\\
		&\quad + C \int_0^t e^{-R(t-s)} (\left\|\nabla(-\Delta)^{-1}f_1(s)\right\|_{L^2}+\|f_1(s)\|_{L^2}+\|f_2(s)\|_{L^2}+\|f_3(s)\|_{L^2})\,ds\\
		&\quad + C \int_0^t (1+ t-s)^{-\frac{3}{4}}(\|h_2(s)\|_{L^1}+\|h_3(s)\|_{L^1})\,ds,
	\end{aligned}
  \end{equation*}
where the nonlinear terms $f_1, f_2$ are given by \eqref{nonlinearterms} and the new definitions $h_2$ and $h_3$ satisfy 
\begin{equation}\label{h1}
		h_2=-u \cdot \nabla u,\quad  h_3=-v \cdot \nabla v+(e^n-1)(u-v).
\end{equation}
On the other hand, we use \eqref{MT} to get
\begin{equation}\label{042001}
	\begin{aligned}
&\|f_1(t)\|_{L^1}\leq C(1+t)^{-4}M(t)^2, \quad \|f_1(t)\|_{L^2}\leq  C(1+t)^{-\frac{19}{4}}M(t)^2,\\
&\|f_2(t)\|_{L^1}\leq C(1+t)^{-2}M(t)^2,\quad \|f_2(t)\|_{L^2}\leq C(1+t)^{-\frac{11}{4}}M(t)^2,\\
&\|h_2(t)\|_{L^1}\leq C(1+t)^{-2}M(t)^2,\quad \|h_3(t)\|_{L^1}\leq C(1+t)^{-2}M(t)^2,\\
&\|f_3(t)\|_{L^2}\leq C(1+t)^{-\frac{11}{4}}M(t)^2,
\end{aligned}
\end{equation}
and
$$
\left\|\nabla(-\Delta)^{-1}f_1(s)\right\|_{L^2}\leq C(\|f_1\|_{L^1}+\|f_1\|_{L^2})\leq C(1+s)^{-4}M(t)^2.
$$
Then, this together with applying H\"{o}lder inequality, we deduce
\begin{equation*}
	\begin{aligned}
	 \|u(t)\|_{L^2}&\leq C(1+t)^{-\frac34}\delta_0 +CM(t)^2\int_0^t e^{-R(t-s)} (1+s)^{-\frac{11}{4}}\,ds\\
		&\quad +CM(t)^2\int_0^t(1+t-s)^{-\frac{3}{4}}(1+s)^{-2}\,ds\\
&\leq C(1+t)^{-\frac{3}{4}}\left(\delta_0+M(t)^2\right).
	\end{aligned}
\end{equation*}
The derivatives of $u$ can be obtained in a similar manner. Thus, we get for $k=0,1,2$ that
$$
\left\|\nabla^{k}u(t)\right\|_{L^2} \leq C(1+t)^{-\frac{3}{4}-\frac{k}{2}}\left(\delta_0+M(t)^2\right).
$$
By using a similar argument, we also obtain the decay estimates of velocity $v$ for $k=0,1,2$ that
$$
\left\|\nabla^{k}v(t)\right\|_{L^2} \leq C(1+t)^{-\frac{3}{4}-\frac{k}{2}}\left(\delta_0+M(t)^2\right).
$$
For low-frequency part of $\left\|\nabla^3 u^\ell\right\|_{L^2}$, we consider
\begin{equation*}
	\begin{aligned}
		\nabla^{3}u^\ell(t)
		&= \nabla^{3}S^\ell_{21} * n_0+	\nabla^{3}S^\ell_{22} * u_0+	\nabla^{3}S^\ell_{23} * v_0 \\
		&\quad +\int_0^t 	\nabla^{3}S^\ell_{21}(t-s) * f_1(s) \,ds+\int_0^t 	\nabla^{3}S^\ell_{22}(t-s)* f_2(s) \,ds+\int_0^t 	\nabla^{3}S^\ell_{23}(t-s) * f_3(s) \,ds.
	\end{aligned}
\end{equation*}
Then we have 
\begin{equation}\label{032001}
	\begin{aligned}
		 \|\nabla^{3}u^\ell(t)\|_{L^2}
		 &\leq C(1+t)^{-\frac{9}{4}}\delta_0
	+C \int_0^t e^{-R(t-s)} \left\|\nabla^{2}f_1\right\|_{L^2} ds\cr
	&\quad +C \int_{0}^{\frac{t}{2}} (1+ t-s)^{-\frac{9}{4}}(\left\|h_2\right\|_{L^1}+\left\|h_3\right\|_{L^1}) \,ds \\
		&\quad +C \int_{\frac{t}{2}}^{t} (1+ t-s)^{-\frac{5}{4}}(\left\|\nabla^{2}h_2\right\|_{L^1}+\left\|\nabla^{2}h_3\right\|_{L^1}) \,ds\\
		&\leq C(1+t)^{-\frac{9}{4}}\left(\delta_0+M(t)^2\right),
	\end{aligned}
\end{equation}
where we used \eqref{042001} and the fact that 
\begin{align*}
\|\nabla^2f_1\|_{L^2}\leq C(1+t)^{-\frac{23}{4}}M(t)^2,\quad \left\|\nabla^{2}h_2\right\|_{L^1}\leq C(1+t)^{-3}M(t)^2,\quad \|\nabla^2h_3\|_{L^1}\leq C(1+t)^{-3}M(t)^2.
\end{align*}
 Similarly, we also achieve that 
\begin{equation}\label{032002}
\|\nabla^{3}v^\ell(t)\|_{L^2} \leq C(1+t)^{-\frac{9}{4}}\left(\delta_0+M(t)^2\right).
\end{equation}
For the decay rate of $\|\nabla^3(n^h,u^h,v^h)\|_{L^2}$, we consider the  high-frequency part of the main system \eqref{main4} as:
\begin{equation*}%\label{M4}
	\left\{
	\begin{aligned}
		&\partial_t n^{h} +{\rm{div}}u^{h}=f^{h}_1, \\
		&\partial_t u^{h}+\nabla n^{h}+u^{h}-v^{h}+\nabla (-\Delta )^{-1}n^{h}=f^{h}_2,\\
		&\partial_t v^{h}+v^{h}-\mathbb{P}u^{h}-\Delta v^{h}=f^{h}_3,\\
		&\text{div} v^h=0.
	\end{aligned}
	\right.
\end{equation*}
Taking $L^2$ inner product with $\nabla^{3} (n^{h},u^{h},v^{h})$ in $\mathbb{R}^3$ gives
\begin{equation*}
	\begin{aligned}
		& \frac{1}{2} \frac{d}{d t}\|\nabla^{3} (n^{h},u^{h},v^{h})\|_{L^2}^2+\left\|\nabla^4 v^{h}\right\|_{L^2}^2 
		 +\|\nabla^{3}(u^{h}-v^{h})\|_{L^2}^2+\int_{\mathbb{R}^3} \nabla^{3} \nabla(-\Delta)^{-1} n^{h} \cdot  \nabla^{3} u^{h} \,dx\\
		 & \quad=\int_{\mathbb{R}^3} \nabla^{3} f^{h}_1 \cdot \nabla^{3} n^{h} \,dx+\int_{\mathbb{R}^3} \nabla^{3} f^{h}_2 \cdot \nabla^{3} u^{h} \,dx+\int_{\mathbb{R}^3} \nabla^{3}f^{h}_{3} \cdot \nabla^{3} v^{h} \,dx.
	\end{aligned}
\end{equation*}
Note that
\begin{equation*}
\begin{aligned}
	\int_{\mathbb{R}^3} \nabla^{3} \nabla(-\Delta)^{-1} n^{h} \cdot \nabla^{3} u^{h} \,dx 
	= & -\int_{\mathbb{R}^3} \nabla^{3}(-\Delta)^{-1} n^{h} \cdot \nabla^{3} \operatorname{div} u^{h} \,dx \\
	= & \frac12 \frac{d}{dt}\|\nabla^{4}(-\Delta)^{-1}n^{h}\|_{L^2}^{2}-\int_{\mathbb{R}^3} \nabla^{3} (-\Delta)^{-1}n^{h} \cdot \nabla^{3}f^{h}_1\,dx.
\end{aligned}
\end{equation*}
Then we obtain
\begin{equation}\label{E21}
	\begin{aligned}
		& \frac{1}{2} \frac{d}{d t}\Big\{\|\nabla^{3} (n^{h},u^{h},v^{h})\|_{L^2}^2+\|\nabla^{4}(-\Delta)^{-1}n^{h}\|_{L^2}^{2}\Big\}+\left\|\nabla^4 v^{h}\right\|_{L^2}^2 
		+\|\nabla^{3}(u^{h}-v^{h})\|_{L^2}^2\\
		& \quad=\int_{\mathbb{R}^3} \nabla^{3} f^{h}_1 \cdot \nabla^{3} n^{h} \,dx+\int_{\mathbb{R}^3} \nabla^{3} f^{h}_2 \cdot \nabla^{3} u^{h} \,dx\\
		&\qquad+\int_{\mathbb{R}^3} \nabla^{3}f^{h}_{3} \cdot \nabla^{3} v^{h} \,dx+\int_{\mathbb{R}^3} \nabla^{3} (-\Delta)^{-1}n^{h}  \cdot \nabla^{3}f^{h}_1\,dx.
	\end{aligned}
\end{equation}
We split the first term on the right-hand side of \eqref{E21} into two terms:
\begin{equation*}
	\int_{\mathbb{R}^3}\nabla^{3} f^{h}_1 \cdot \nabla^{3} n^{h} \,dx= \int_{\mathbb{R}^3} \nabla^{3} f_1 \cdot \nabla^{3} n^{h} \,dx-\int_{\mathbb{R}^3} \nabla^{3} f^\ell_1 \cdot \nabla^{3} n^{h} \,dx
	\triangleq J_{1}+J_{2}.
\end{equation*}
We then estimate $J_1$ and $J_2$ as
The term $J_1$ is estimated as 
	\begin{align*}
	J_1&\leq \Big|\int_{\mathbb{R}^3} (\nabla^{3} (u \cdot \nabla n)-u\cdot\nabla^3\nabla n) \cdot \nabla^{3} n^{h} \,dx\Big| +\Big|\int_{\mathbb{R}^3} u\cdot\nabla^3\nabla n \cdot \nabla^{3} n^{h} \,dx\Big| \\
	&\leq \Big|\int_{\mathbb{R}^3} (\nabla^{3} (u \cdot \nabla n)-u\cdot\nabla^3\nabla n) \cdot \nabla^{3} n^{h} \,dx\Big| +\Big|\int_{\mathbb{R}^3} u\cdot\nabla^3\nabla n^h \cdot \nabla^{3} n^{h} \,dx\Big|+\Big|\int_{\mathbb{R}^3} u\cdot\nabla^3\nabla n^\ell \cdot \nabla^{3} n^{h} \,dx\Big|  \\
	&\leq C(\|\nabla u\|_{L^\infty}\|\nabla^3n\|_{L^2}+\|\nabla^3u\|_{L^2}\|\nabla n\|_{L^\infty})\|\nabla^3n^h\|_{L^2}+C\|\text{div}u\|_{L^\infty}\|\nabla^3n^h\|_{L^2}\cr
	&\quad +C\|u\|_{L^\infty}\|\nabla^4n^\ell\|_{L^2}\|\nabla^3n^h\|_{L^2}\\
	 &\leq C\|\nabla u\|_{L^\infty}\|\nabla^3n\|_{L^2}^2+C\|\nabla n\|_{L^\infty}\|\nabla^3u\|_{L^2}\|\nabla^3n\|_{L^2}+C\|u\|_{L^\infty}\|\nabla^3n\|_{L^2}^2\\
	 &\leq C (1+t)^{-10} M(t)^3
	\end{align*}
and 
 \begin{equation*}
 	\begin{aligned}
 		J_{2}
 		\leq \|\nabla^{3}n^{h}\|_{L^2}\|\nabla^{3} f^\ell_1\|_{L^2} \leq C\|\nabla^{3}n\|_{L^2}\|\nabla^{2} f_1\|_{L^2} \leq C\|\nabla^{3}n\|_{L^2}\|\nabla^{2} (u\cdot \nabla n)\|_{L^2}  \leq C (1+t)^{-10} M(t)^3 .
 	\end{aligned}
 \end{equation*}
This deduces
\begin{equation*}
	\begin{aligned}
		\int_{\mathbb{R}^3} \nabla^{3} f^{h}_1 \cdot \nabla^{3} n^{h} \,dx \leq C (1+t)^{-10} M(t)^3
	\end{aligned}
\end{equation*}
for some $C>0$ independent of time.

For the second term on the right-hand side of \eqref{E21}, we also divide it into two terms:
\begin{align*}
		\int_{\mathbb{R}^3} \nabla^{3} f^{h}_2 \cdot \nabla^{3} u^{h} \,dx= \int_{\mathbb{R}^3} \nabla^{3} f_2 \cdot \nabla^{3} u^{h} \,dx-\int_{\mathbb{R}^3} \nabla^{3} f^{\ell}_2 \cdot \nabla^{3} u^{h} \,dx
		\triangleq J_{3}+J_{4}.
	\end{align*}
Here the first term of $J_3$ can be estimated as	
	\begin{align*}
   &\Big|\int_{\mathbb{R}^3}\nabla^3(u\cdot\nabla u)\cdot\nabla^3u^h\,dx\Big|\\
	&\quad \leq \Big|\int_{\mathbb{R}^3} (\nabla^{3} (u \cdot \nabla u)-u\cdot\nabla^3\nabla u) \cdot \nabla^{3} u^{h} \,dx\Big| +\Big|\int_{\mathbb{R}^3} u\cdot\nabla^3\nabla u \cdot \nabla^{3} u^{h} \,dx\Big| \\
	&\quad \leq \Big|\int_{\mathbb{R}^3} (\nabla^{3} (u \cdot \nabla u)-u\cdot\nabla^3\nabla u) \cdot \nabla^{3} u^{h} \,dx\Big| +\Big|\int_{\mathbb{R}^3} u\cdot\nabla^3\nabla u^h \cdot \nabla^{3} u^{h} \,dx\Big|+\Big|\int_{\mathbb{R}^3} u\cdot\nabla^3\nabla u^\ell \cdot \nabla^{3} u^{h} \,dx\Big|  \\
	&\quad \leq C(\|\nabla u\|_{L^\infty}\|\nabla^3u\|_{L^2}+\|\nabla^3u\|_{L^2}\|\nabla u\|_{L^\infty})\|\nabla^3u^h\|_{L^2}+C\|\text{div}u\|_{L^\infty}\|\nabla^3u^h\|_{L^2}^2\cr
	&\qquad +C\|u\|_{L^\infty}\|\nabla^4u^\ell\|_{L^2}\|\nabla^3u^h\|_{L^2}\\
	&\quad \leq C\|\nabla u\|_{L^\infty}^2\|\nabla^3u\|_{L^2}^2+C\| u\|^{2}_{L^\infty}\|\nabla^{3}u\|^{2}_{L^2}  +\epsilon \|\nabla^{3}u^{h}\|^{2}_{L^2}\\
	&\quad \leq C (1+t)^{-\frac{15}{2}} M(t)^4+\epsilon \|\nabla^{3}u^{h}\|^{2}_{L^2}.
\end{align*}
The other terms of $J_3$ and $J_4$ can be handled in a similar manner. Finally, we obtain 
$$
\int_{\mathbb{R}^3} \nabla^{3} f^{h}_2 \cdot \nabla^{3} u^{h} \,dx=J_{3}+J_4 \leq C(1+t)^{-\frac{15}{2}}(M(t)^3+M(t)^4)+C\epsilon \|\nabla^{3}u^{h}\|^{2}_{L^2}.
$$
We also have 
\begin{align*}
&\int_{\mathbb{R}^3} \nabla^{3} f^{h}_3 \cdot \nabla^{3} v^{h} \,dx+\int_{\mathbb{R}^3} \nabla^{3} (-\Delta)^{-1}n^{h}  \cdot \nabla^{3}f^{h}_1\,dx \cr
&\quad \leq C(1+t)^{-\frac{15}{2}}(M(t)^3+M(t)^4)+C\epsilon \|\nabla^{3}v^{h}\|^{2}_{L^2}.
\end{align*}
The other terms are treated similarly, and thus we have
\begin{equation*}
	\begin{aligned}
		& \frac{1}{2} \frac{d}{d t}\Big\{\|\nabla^{3} (n^{h},u^{h},v^{h})\|_{L^2}^2+\|\nabla^{4}(-\Delta)^{-1}n^{h}\|_{L^2}^{2}\Big\}+\left\|\nabla^4 v^{h}\right\|_{L^2}^2 
		+\|\nabla^{3}(u^{h}-v^{h})\|_{L^2}^2\\
		& \quad \leq C (1+t)^{-\frac{15}{2}} \Big(M(t)^3+M(t)^4\Big)+ C\epsilon (\|\nabla^{3}u^{h}\|^{2}_{L^2}+\|\nabla^{3}v^{h}\|^{2}_{L^2})\\
		& \quad \leq C (1+t)^{-\frac{15}{2}} \Big(M(t)^3+M(t)^4\Big)+ C\epsilon (\|\nabla^{3}(u^{h}-v^h)\|^{2}_{L^2}+\|\nabla^{4}v^{h}\|^{2}_{L^2}).
	\end{aligned}
\end{equation*}
Due to the smallness of  $\epsilon$, we have
\begin{equation}\label{E222}
	\begin{aligned}
		& \frac{1}{2} \frac{d}{d t}\Big\{\|\nabla^{3} (n^{h},u^{h},v^{h})\|_{L^2}^2+\|\nabla^{4}(-\Delta)^{-1}n^{h}\|_{L^2}^{2}\Big\}+\frac{1}{2}\Big\{\left\|\nabla^4 v^{h}\right\|_{L^2}^2 
		+\|\nabla^{3}(u^{h}-v^{h})\|_{L^2}^2\Big\}\\
		& \quad \leq C (1+t)^{-\frac{15}{2}} \Big(M(t)^3+M(t)^4\Big).
	\end{aligned}
\end{equation}

%Noting that 
%\begin{equation}\label{M7}
%\partial_t u^{h}+\nabla n^{h}+u^{h}-v^{h}+\nabla (-\Delta )^{-1}n^{h}=f^{h}_2.
%\end{equation}
%Taking $\int_{\mathbb{R}^3}\nabla^{3}n^{h} \cdot\nabla^{2}\eqref{M7}\,dx$ yields that 
To close the estimate, similarly as in Proposition \ref{p3}, we estimate
\begin{equation*}
	\begin{aligned}
		& \|\nabla^{3}n^{h}\|_{L^2}^2+\int_{\mathbb{R}^3} \nabla^{2} \partial_{t} u^{h} \cdot  \nabla^{3} n^{h} \,dx\\
		& \quad=-\int_{\mathbb{R}^3} \nabla^{2}(u^{h}-v^{h}) \cdot \nabla^{3} n^{h} \,dx-\int_{\mathbb{R}^3} \nabla^{3}(-\Delta)^{-1}n^{h} \cdot \nabla^{3} n^{h} \,dx+\int_{\mathbb{R}^3} \nabla^{2}f^{h}_{2} \cdot \nabla^{3} n^{h} \,dx,
	\end{aligned}
\end{equation*}
then
\begin{equation}\label{E8}
	\begin{aligned}
		& \frac{d}{dt}\int_{\mathbb{R}^3} \nabla^{2}  u^{h} \cdot  \nabla^{3} n^{h} \,dx + \frac{1}{2}\|\nabla^{3}n^{h}\|_{L^2}^2+\|\nabla^4(-\Delta)^{-1}n^h\|_{L^2}^2\\
		&\quad \leq  C (1+t)^{-\frac{15}{2}} M(t)^3+ C\|\nabla^{3}(u^h-v^h)\|_{L^2}^2.
	\end{aligned}
\end{equation}
Let $\beta_2>0$ be a small constant. Taking $\eqref{E222}+\beta_{2} \times \eqref{E8}$ yields that
	\begin{align*}
&\frac{d}{dt}\Big\{\frac{1}{2}\Big(\|\nabla^3(n^h,u^h,v^h)\|_{L^2}^2+\|\nabla^4(-\Delta)^{-1}n^h\|_{L^2}^2\Big)+\beta_2\int_{\mathbb{R}^3}\nabla^{2}  u^{h} \cdot  \nabla^{3} n^{h} \,dx \Big\}\\
&\quad+\frac{1}{2}\Big(\left\|\nabla^4 v^{h}\right\|_{L^2}^2 
+\|\nabla^{3}(u^{h}-v^{h})\|_{L^2}^2\Big)+\beta_2\Big( \frac{1}{2}\|\nabla^{3}n^{h}\|_{L^2}^2+\|\nabla^4(-\Delta)^{-1}n^h\|_{L^2}^2\Big)\\
&\qquad \leq C (1+t)^{-\frac{15}{2}} \Big(M(t)^3+M(t)^4\Big)
+C\beta_2 (1+t)^{-\frac{15}{2}} M(t)^3+ C\beta_2\|\nabla^{3}(u^h-v^h)\|_{L^2}^2.
	\end{align*}
	Choosing $\beta_{2} > 0$ small enough implies
\begin{equation*}
	\begin{aligned}
		& \frac{d}{d t}\Big(\|\nabla^{3} (n^{h},u^{h},v^{h})\|_{L^2}^2+\|\nabla^{4}(-\Delta)^{-1}n^{h}\|_{L^2}^{2}\Big) +C\Big(\|\nabla^{3}(n^{h},u^{h},v^{h})\|_{L^2}^2+\|\nabla^4(-\Delta)^{-1}n^h\|_{L^2}^2\Big)\\
		& \quad \leq C (1+t)^{-\frac{15}{2}} (M(t)^3+M(t)^4).
	\end{aligned}
\end{equation*}
By Gr\"{o}nwall's lemma, it holds 
\begin{equation}\label{032003}
\|\nabla^{3}(n^{h},u^{h},v^{h})\|_{L^2}
\leq C (1+t)^{-\frac{15}{4}} (\delta_{0}+M(t)^{\frac{3}{2}}+M(t)^2) .
\end{equation}
Combining the estimates \eqref{011707}, \eqref{032001}, \eqref{032002}, and \eqref{032003}, we obtain
\begin{equation*}
	\|\nabla^{3}n\|_{L^2}
	\leq	\|\nabla^{3}n^{h}\|_{L^2}+	\|\nabla^{3}n^\ell\|_{L^2} 
	\leq C (1+t)^{-\frac{15}{4}} (\delta_{0}+M^{\frac32}(t)+M(t)^2),
\end{equation*}
\begin{equation*}
	\|\nabla^{3}(u,v)\|_{L^2}
	\leq	\|\nabla^{3}(u^h,v^h)\|_{L^2}+	\|\nabla^{3}(u^\ell,v^\ell)\|_{L^2} 
	\leq C (1+t)^{-\frac{9}{4}} (\delta_{0}+M^{\frac32}(t)+M(t)^2),
\end{equation*}
which, together with the decay rates of $\left\|\nabla^k(n, u, v)(t)\right\|_2(k=0,1,2)$ and the definitions of $M(t)$, gives
\begin{equation*}
	M(t) \leq C (\delta_{0}+M^{\frac32}(t)+M(t)^2) .
\end{equation*}
Due to the smallness of $\delta_0$, we directly observe that
$$
M(t) \leq  C\delta_0.
$$
As a consequence, we deduce
	\begin{equation}\label{032503}
	\begin{aligned}
		\left\|\nabla^{k}n(t)\right\|_{L^2}&\leq C\delta_0(1+t)^{-\frac{11}{4}-\frac{k}{2}}, \quad k=0,1,2, \\
		\left\|\nabla^{3}n(t)\right\|_{L^2}&\leq C \delta_0(1+t)^{-\frac{15}{4}},\\
		\left\|\nabla^{k}(u, v)(t)\right\|_{L^2}&\leq C\delta_0(1+t)^{-\frac{3}{4}-\frac{k}{2}},\quad k=0,1,2,3.
	\end{aligned}
\end{equation}

We next provide the decay rate of $\|u-v\|_{L^2}$. It follows from \eqref{main4} that $u-v$ satisfies
\[%\begin{equation}\label{032504}
	\partial_t(u-v)+2(u-v)=-\nabla n-\nabla(-\Delta)^{-1}n+v\cdot\nabla v+\nabla P-\Delta v+f_2-(e^n-1)(u-v). 
\]%\end{equation}
The critical aspect lies in estimating $\|\nabla(-\Delta)^{-1}n\|_{L^2}$. First, we deduce
	\begin{align*}	
		&\left\|\nabla (-\Delta)^{-1}n(t)\right\|_{L^2} \cr
		&\quad \leq\left\|\nabla (-\Delta)^{-1} S_{11} * n_0\right\|_{L^2}+\left\|\nabla (-\Delta)^{-1}S_{12} * u_0\right\|_{L^2}\\
		&\qquad +\int_0^t\left\|\nabla (-\Delta)^{-1}S_{11}(t-s) * f_1(s)\right\|_{L^2} \,ds+\int_0^t\left\|\nabla (-\Delta)^{-1}S_{12}(t-s) * f_2(s)\right\|_{L^2} \,ds \\
		&\quad \leq Ce^{-R t}\left(\left\|\nabla(-\Delta)^{-1} n_0\right\|_{L^2}+\left\|\nabla(-\Delta)^{-1} u_0\right\|_{L^2}\right)\\
		&\qquad +C\int_0^t e^{-R(t-s)}\Big(\|\nabla(-\Delta)^{-1}f_1(s)\|_{L^2}+\|\nabla(-\Delta)^{-1}f_2(s)\|_{L^2}\Big) \,ds \\
		&\quad \leq Ce^{-Rt}\left(\left\| n_0\right\|_{\dot{H}^{-1}}+\|u_0\|_{L^2}+\|u_0\|_{L^1}\right)\\
		&\qquad +C\int_0^te^{-R(t-s)}(\|f_1(s)\|_{L^2}+\|f_1(s)\|_{L^1}+\|f_2(s)\|_{L^2}+\|f_2(s)\|_{L^1})\,ds\\
		&\quad \leq Ce^{-Rt}\delta_0+C\int_0^t e^{-R(t-s)}\Big\{\left\|(u \cdot \nabla n)\right\|_{L^2}+\left\|(u \cdot \nabla n)\right\|_{L^1}+\left\|(u \cdot \nabla u)\right\|_{L^2}+\left\|(u \cdot \nabla u)\right\|_{L^1}\\
		&\hspace{4cm} + \left\|\nabla (-\Delta)^{-1}(e^n-1-n) \right\|_{L^2}+\left\|\nabla (-\Delta)^{-1}(e^n-1-n) \right\|_{L^1}\Big\}\,ds.
	\end{align*}
By \eqref{032503}, we show that 
	\begin{align*}	
	\left\|\nabla (-\Delta)^{-1}n(t)\right\|_{L^2}  \leq Ce^{-Rt}\delta_0+C\delta_0^2\int_0^t e^{-R(t-s)}(1+s)^{-2}\,ds \leq C\delta_0(1+t)^{-2}.
\end{align*}
We then estimate $L^2$ norm of $u-v$ to get
$$
\begin{aligned}
&\frac{1}{2}\frac{d}{dt}\|u-v\|_{L^2}^2+2\|u-v\|_{L^2}^2\\
&\quad \leq \left| \int_{\mathbb{R}^3}(-\nabla n-\nabla(-\Delta)^{-1}n+v\cdot\nabla v+\nabla P-\Delta v+f_2-(e^n-1)(u-v))\cdot (u-v)\,dx\right|\\
&\quad \leq C_\epsilon(\|\nabla n\|_{L^2}^2+\|\nabla(-\Delta)^{-1}n\|_{L^2}^2+\|v\cdot\nabla v\|_{L^2}^2+\|\nabla P\|_{L^2}^2+\|\nabla^2v\|_{L^2}^2+\|f_2\|_{L^2}^2)\\
&\qquad +\epsilon\|u-v\|_{L^2}^2+\|e^n-1\|_{L^\infty}\|u-v\|_{L^2}^2\\
&\quad \leq C\delta_0^2((1+t)^{-\frac{13}{2}}+(1+t)^{-4}+(1+t)^{-\frac{11}{2}}+(1+t)^{-\frac{7}{2}})+2\epsilon\|u-v\|_{L^2}^2+\frac{5}{4}\|u-v\|_{L^2}^2\\
&\quad \leq C\delta_0^2(1+t)^{-\frac{7}{2}}+(\frac{5}{4}+2\epsilon)\|u-v\|_{L^2}^2\\
&\quad \leq C\delta_0^2(1+t)^{-\frac{7}{2}}+\frac{3}{2}\|u-v\|_{L^2}^2,
\end{aligned}
$$
where we used the smallness of $\epsilon$ and
\begin{align*}
\|\nabla P\|_{L^2} &\leq \|v\cdot\nabla v\|_{L^2}+\|\rho(u-v)\|_{L^2} \cr
&\leq \|v\cdot\nabla v\|_{L^2}+\|\rho\|_{L^\infty}\|(u-v)\|_{L^2} \cr
&\leq C\delta_0(1+t)^{-\frac{11}{4}}+\frac{5}{4}\|u-v\|_{L^2}.
\end{align*}
Solving this inequality yields
\begin{equation*}
	\|(u-v)(t)\|_{L^2}\leq C\delta_0(1+t)^{-\frac{7}{4}}.
\end{equation*}
Similarly, it also holds that
 \begin{equation*}
 	\|\nabla(u-v)(t)\|_{L^2}\leq C\delta_0(1+t)^{-\frac{9}{4}}.
 \end{equation*}
This completes the proof.
\end{proof}
%%%%%%%%%%%%%%%%%%%%%%%%%%%%%%%%%%%%%%%%%%
%
%
%
%
%
%	
%%%%%%%%%%%%%%%%%%%%%%%%%%%%%%%%%%%%%%%%%%
\section{Optimal decay temporal estimates for the EP-NS system}\label{Sec5}
In this section, we provide the lower bound estimates of decay rates of solutions to the EP-NS system thereby completing the proof of  Theorem \ref{Th3}.

We start by presenting the classical interpolation inequality in the lemma below.
\begin{lem}\label{lema5}
	Let $a\geq 0$ and integer $l \geq 0$, then we have
	\begin{equation*}
		\|\nabla^l f\|_{L^2}\lesssim \|\nabla^{l+1}f\|_{L^2}^{1-\theta}\|\Lambda^{-a}f\|_{L^2}^\theta\quad \text{with}\quad \theta=\frac{1}{1+l+a}.
	\end{equation*}
\end{lem}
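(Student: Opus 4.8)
The plan is to prove this interpolation inequality by passing to the Fourier side and applying Hölder's inequality with a suitably chosen pair of conjugate exponents. By Plancherel's theorem the three quantities in the statement are, up to squaring, $\int_{\R^3}|\xi|^{2l}|\hat f(\xi)|^2\,d\xi$, $\int_{\R^3}|\xi|^{2(l+1)}|\hat f(\xi)|^2\,d\xi$, and $\int_{\R^3}|\xi|^{-2a}|\hat f(\xi)|^2\,d\xi$, so it suffices to bound the first integral by the product of appropriate powers of the other two.

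First I would fix the interpolation exponent by requiring $|\xi|^{2l}=\big(|\xi|^{2(l+1)}\big)^{1-\theta}\big(|\xi|^{-2a}\big)^{\theta}$, i.e. $2l=2(l+1)(1-\theta)-2a\theta$, which rearranges to $(1+l+a)\theta=1$; since $a\ge0$ and $l\ge0$ this gives $\theta=\frac{1}{1+l+a}\in(0,1]$, matching the statement. Then I would write the integrand as
\[
|\xi|^{2l}|\hat f(\xi)|^2=\big(|\xi|^{2(l+1)}|\hat f(\xi)|^2\big)^{1-\theta}\big(|\xi|^{-2a}|\hat f(\xi)|^2\big)^{\theta}
\]
and apply Hölder's inequality with exponents $\tfrac{1}{1-\theta}$ and $\tfrac{1}{\theta}$ to get
\[
\int_{\R^3}|\xi|^{2l}|\hat f|^2\,d\xi\le\Big(\int_{\R^3}|\xi|^{2(l+1)}|\hat f|^2\,d\xi\Big)^{1-\theta}\Big(\int_{\R^3}|\xi|^{-2a}|\hat f|^2\,d\xi\Big)^{\theta}.
\]
Taking square roots and invoking Plancherel's theorem once more yields $\|\nabla^l f\|_{L^2}\le\|\nabla^{l+1}f\|_{L^2}^{1-\theta}\|\Lambda^{-a}f\|_{L^2}^{\theta}$, which is the claimed bound (indeed with implicit constant $1$).

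There is essentially no obstacle here: the proof is a single Hölder estimate in frequency space once the exponent bookkeeping is done. The only point worth noting is that the inequality is only meaningful when the right-hand side is finite, i.e. $f\in\dot H^{l+1}(\R^3)\cap\dot H^{-a}(\R^3)$, in which case the left-hand side is automatically finite by the estimate itself. Alternatively one could simply cite this as the standard Gagliardo--Nirenberg interpolation inequality and argue in physical space via a Littlewood--Paley decomposition, but the Fourier/Hölder route is the most direct and self-contained.
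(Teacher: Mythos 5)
Your proof is correct and is essentially identical to the paper's: both pass to Fourier space via Plancherel/Parseval, write $|\xi|^{2l}$ as a weighted product of $|\xi|^{2(l+1)}$ and $|\xi|^{-2a}$, and apply H\"older's inequality with exponents $\tfrac{1}{1-\theta}$ and $\tfrac{1}{\theta}$. You simply spell out the exponent bookkeeping that the paper leaves implicit.
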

\begin{proof}		
By Parseval's identity and H\"{o}lder's inequality, we easily find
	\begin{equation*}
		\|\nabla^l f\|_{L^2}=\Big\|(i\xi)^l\hat{f}\Big\|_{L^2}
		\lesssim\big\|(i\xi)^{l+1}\hat{f}\big\|_{L^2}^{1-\theta}\big\||\xi|^{-a}\hat{f}\big\|_{L^2}^\theta=\|\nabla^{l+1}f\|_{L^2}^{1-\theta}\|\Lambda^{-a}f\|_{L^2}^\theta,
	\end{equation*}
	for $\theta=\frac{1}{1+l+a}$. This completes the proof.
\end{proof}

%%%%%%%%%%%%%%%%%%%%%%%%%%%%%%%%%%%%%%%%%%
%
%
%
%
%
%	
%%%%%%%%%%%%%%%%%%%%%%%%%%%%%%%%%%%%%%%%%%
\subsection{Proof of Theorem \ref{thm3}}

We first divide the velocity $u$ into two terms:
		\begin{align*}
				u(t)&= \Big( S_{21} * n_0+S_{22} * u_0+S_{23} * v_0\Big) \\
				&\quad +\Big(\int_0^t S_{21}(t-s) * f_1(s) \,ds+\int_0^t S_{22}(t-s)* f_2(s) \,ds+\int_0^t S_{23}(t-s) * f_3(s) \,ds\Big)\\
				&\triangleq N_1+N_2.
			\end{align*}
We then estimate
\[%		\begin{equation}\label{D1}
		\begin{aligned}
			\|N_2\|_{L^2}&\leq \int_0^t \|S_{21}(t-s) * f_1(s)\|_{L^2} \,ds+\int_0^t \|S_{22}(t-s)* f_2(s)\|_{L^2}\,ds  +\int_0^t \|S_{23}(t-s) * f_3(s) \|_{L^2}\,ds\\
			&\leq C\int_0^te^{-R(t-s)}(\|f_1\|_{L^2}+\|f_1\|_{L^1})\,ds
			+C\int_0^t(1+t-s)^{-\frac{3}{4}}(\|h_2\|_{L^1}+\|h_2\|_{L^2})\,ds\\
			&\quad+C\int_0^t(1+t-s)^{-\frac{3}{4}}(\|h_3\|_{L^1}+\|h_3\|_{L^2})\,ds,
		\end{aligned}
\]%		\end{equation}
		where $f_i$, $i=1,2,3$ and $h_j$, $j=2,3$ are given as in \eqref{nonlinearterms} and \eqref{h1}, respectively.
		
On the other hand, using \eqref{lemA1-1}, we obtain
\[%		\begin{equation}\label{N001}
			\begin{aligned}
				 \|f_1\|_{L^1}&\leq \|u\|_{L^2}\|\nabla n\|_{L^2}\leq C\delta_0^2(1+t)^{-4},\\
		 \|f_1\|_{L^2}&\leq \|u\|_{L^\infty}\|\nabla n\|_{L^2}\leq C\delta_0^2(1+t)^{-\frac{19}{4}},\\
		 \|h_2\|_{L^1}&\leq \|u\|_{L^2}\|\nabla u\|_{L^2}\leq C\delta_0^2(1+t)^{-2},\\
		  \|h_2\|_{L^2}&\leq \|u\|_{L^\infty}\|\nabla u\|_{L^2}\leq C\delta_0^2(1+t)^{-\frac{11}{4}},\\
		 \|h_3\|_{L^1}&\leq \|v\|_{L^2}\|\nabla v\|_{L^2}+\|e^n-1\|_{L^2}\|u-v\|_{L^2}\leq C\delta_0^2(1+t)^{-2},\\
		 \|h_3\|_{L^2}&\leq \|v\|_{L^\infty}\|\nabla v\|_{L^2}+\|e^n-1\|_{L^2}\|u-v\|_{L^\infty}\leq C\delta_0^2(1+t)^{-\frac{11}{4}}.
		 \end{aligned}
\]%			\end{equation}
Thus, we deduce
			\begin{equation*}%\label{thm3-p2}
			\begin{aligned}
				\|N_2\|_{L^2}
				&\leq C\delta_0^2\int_0^te^{-R(t-s)}(1+s)^{-\frac{11}{4}}\,ds +C\delta_0^2\int_0^t(1+t-s)^{-\frac{3}{4}}(1+s)^{-2}\,ds\\
				&\quad +C\delta_0^2\int_0^t(1+t-s)^{-\frac{3}{4}}(1+s)^{-2}\,ds\\
				&\leq C\delta_0^2(1+t)^{-\frac{3}{4}},
			\end{aligned}
		\end{equation*}
		and subsequently, by Parseval's identity, Proposition \ref{pro-li-d}, and the smallness of $\delta_0$, we have
		\begin{equation}\label{thm3-p1}
			\begin{aligned}
				\|u(t)\|_{L^2}
				&\geq \|N_1\|_{L^2}-\|N_2\|_{L^2} \geq d_*(1+t)^{-\frac{3}{4}}- C\delta_0^2(1+t)^{-\frac{3}{4}} \geq \frac{1}{2}d_*(1+t)^{-\frac{3}{4}}.
			\end{aligned}
		\end{equation}
		
For the lower bound estimates for the derivatives, we estimate 		
		\begin{equation}\label{thm3-p4}
			\begin{aligned}
				&\|\Lambda^{-1}u(t)\|_{L^2}\cr
				&\quad \leq \|\Lambda^{-1}u^\ell(t)\|_{L^2}
				+\|\Lambda^{-1}u^h(t)\|_{L^2}\\
				&\quad \leq C(1+t)^{-\frac{1}{4}}+C\int_0^t(1+t-s)^{-\frac{1}{4}}(\|f_1(s)\|_{L^1} +\|f_1(s)\|_{L^2} +\|h_2(s)\|_{L^1}+\|h_3(s)\|_{L^1})\,ds\cr
				&\qquad +C\|u^h(t)\|_{L^2}\\
				&\quad \leq C(1+t)^{-\frac{1}{4}}+C\int_0^t(1+t-s)^{-\frac{1}{4}}(1+s)^{-2}\,d\tau+C\|u(t)\|_{L^2}\\
				&\quad \leq C(1+t)^{-\frac{1}{4}}
			\end{aligned}
		\end{equation}
		for $t>0$ large enough. Then, it follows from Lemma \ref{lema5} that 
		\begin{equation*}%\label{51}
			\|u\|_{L^2}\leq C\|\Lambda^{-1}u\|_{L^2}^{\frac{k}{k+1}}
			\|\nabla^k u\|_{L^2}^{\frac{1}{k+1}},
		\end{equation*}
		which, together with \eqref{thm3-p1} and \eqref{thm3-p4}, yields for $k=0,1,2,3$ that 
		\begin{equation*}%\label{thm3-p5}
		\|\nabla^ku(t)\|_{L^2}\geq c_*(1+t)^{-\frac{3}{4}-\frac{k}{2}},
		\end{equation*}
		with $c_*$ a positive constant. Similarly, we can also obtain the lower bound of the decay rates of velocity $v$
		\begin{equation*}
		\|\nabla^kv(t)\|_{L^2}\geq c_*(1+t)^{-\frac{3}{4}-\frac{k}{2}}.
		\end{equation*}
			
To obtain the lower bound of the decay rates for the difference $u-v$, we first write 
\begin{align*}
	u-v&=
	\Big\{ S_{21} * n_0+S_{22} * u_0+S_{23} * v_0-( S_{32} * u_0+S_{33} * v_0)\Big\} \\
	&\quad +\Big\{\int_0^t S_{21}(t-s) * f_1(s) \,ds+\int_0^t (S_{22}-S_{32})(t-s)* f_2(s) \,ds+\int_0^t (S_{23}-S_{33})(t-s)* f_3(s) \,ds\Big\}\\
	&\triangleq N_3+N_4.
\end{align*}
Here we notice that  $N_3$ is the linear part of $u-v$, and thus by Proposition \ref{pro-li-d}, we get
\begin{equation*}
	\bar{d}_*(1+t)^{-\frac{7}{4}}\leq \|N_3\|_{L^2}\leq C(1+t)^{-\frac{7}{4}}.
\end{equation*}
For the nonlinear term $N_4$, by Proposition \ref{lemA1}, the first term can be estimated as 
\begin{align*}
\left\|\int_0^t S_{21}(t-s) * f_1(s) \,ds\right\|_{L^2}&\leq C\int_0^te^{-R(t-s)}(\|f_1(s)\|_{L^1}+\|f_1(s)\|_{L^2})\,ds\\
&\leq C\delta_0^2\int_0^te^{-R(t-s)}\Big((1+s)^{-4}+(1+s)^{-\frac{19}{4}})\,ds\\
&\leq C\delta_0^2(1+t)^{-4}.
\end{align*}
For the second term of $N_4$, we obtain 
\begin{align*}
&\left\|\int_0^t (S_{22}-S_{32})(t-s)* f_2(s) \,ds\right\|_{L^2}\\
&\quad \leq \left\|\int_0^t\Big\{\frac{(\lambda_3+2)e^{\lambda_4t}-(\lambda_4+2)e^{\lambda_3t}}{\lambda_3-\lambda_4}\big(I-\frac{\xi\xi^t}{|\xi|^2}\big)+\frac{\lambda_1e^{\lambda_1t}-\lambda_2e^{\lambda_2t}}{\lambda_1-\lambda_2}\frac{\xi\xi^t}{|\xi|^2}\Big\}\hat{f}_2(s)\,ds\right\|_{L^2}.
\end{align*}
By Proposition  \ref{pro-li-d}, it holds for $|\xi|\leq r_0$ that 
\begin{align*}
	\frac{(\lambda_3+2)e^{\lambda_4t}-(\lambda_4+2)e^{\lambda_3t}}{\lambda_3-\lambda_4}\big(I-\frac{\xi\xi^t}{|\xi|^2}\big)\simeq \frac{1}{4}|\xi|^2e^{-\frac{1}{2}|\xi|^2t}\big(I-\frac{\xi\xi^t}{|\xi|^2}\big),
\end{align*}
and for $|\xi|>r_0$
\begin{align*}
	\frac{(\lambda_3+2)e^{\lambda_4t}-(\lambda_4+2)e^{\lambda_3t}}{\lambda_3-\lambda_4}\big(I-\frac{\xi\xi^t}{|\xi|^2}\big)\simeq e^{-Rt}.
\end{align*}
Thus we have 
\begin{align*}
	&\left\|\int_0^t (S_{22}-S_{32})(t-s)* f_2(s) \,ds\right\|_{L^2}\\
	&\quad \leq \left\|\int_0^t\Big\{\frac{(\lambda_3+2)e^{\lambda_4t}-(\lambda_4+2)e^{\lambda_3t}}{\lambda_3-\lambda_4}\big(I-\frac{\xi\xi^t}{|\xi|^2}\big)+\frac{\lambda_1e^{\lambda_1t}-\lambda_2e^{\lambda_2t}}{\lambda_1-\lambda_2}\frac{\xi\xi^t}{|\xi|^2}\Big\}\hat{f}_2(s)\,ds\right\|_{L^2}\\
	&\quad \leq C\int_0^t \left\|\frac{1}{4}|\xi|^2e^{-\frac{1}{2}|\xi|^2t}\big(I-\frac{\xi\xi^t}{|\xi|^2}\big)\right\|_{L^2}\|f_2(s)\|_{L^1}\,ds+C\int_0^te^{-R(t-s)}\|f_2(s)\|_{L^2}\,ds\\
	&\quad \leq C\int_0^t(1+t-s)^{-\frac{7}{4}}(\|f_2(s)\|_{L^1}+\|f_2(s)\|_{L^2})\,ds\\
	&\quad \leq C\delta_0^2\int_0^t(1+t-s)^{-\frac{7}{4}}\Big((1+s)^{-2}+(1+s)^{-\frac{11}{4}}\Big)\,ds\\
	&\quad \leq C\delta_0^2(1+t)^{-\frac{7}{4}}.
\end{align*}
Similarly, we also get
\begin{align*}
	\left\|\int_0^t (S_{23}-S_{33})(t-s) * f_3(s) \,ds\right\|_{L^2}\leq  C\delta_0^2(1+t)^{-\frac{7}{4}}.
\end{align*}
Combining the above estimates, we arrive at
\begin{align*}
	\|N_4\|_{L^2}\leq C\delta_0^2(1+t)^{-\frac{7}{4}},
\end{align*}
and subsequently, for large time, we conclude
\begin{align}\label{051401}
	\|u-v\|_{L^2}\geq \|N_3\|_{L^2}-\|N_4\|_{L^2}\geq \bar{d}_*(1+t)^{-\frac{7}{4}}-C\delta_0^2(1+t)^{-\frac{7}{4}}\geq \frac{1}{2}\bar{d}_*(1+t)^{-\frac{7}{4}}.
\end{align}
We take a similar way as \eqref{thm3-p4}, which gives 
\begin{align}\label{051402}
\|\Lambda^{-1}(u-v)\|_{L^2}\leq C(1+t)^{-\frac{5}{4}}.
\end{align}
Then by Lemma \ref{lema5} and \eqref{051401}-\eqref{051402}, there exists a positive constant $\bar{c}_*$ such that 
	\begin{align*}
	\|\nabla^k(u-v)\|_{L^2}&\geq C \|\Lambda^{-1}(u-v)\|_{L^2}^{-k}
	\|u-v\|_{L^2}^{k+1}\geq \bar{c}_*(1+t)^{-\frac{7}{4}-\frac{k}{2}},\quad k=0,1.
\end{align*}
Combining \eqref{040101} in Theorem \ref{thm1}, we complete the proof of Theorem \ref{thm3}.

%%%%%%%%%%%%%%%%%%%%%%%%%%%%%%%%%%%%%%%%%%
%
%
%
%
%
%	
%%%%%%%%%%%%%%%%%%%%%%%%%%%%%%%%%%%%%%%%%%

\section{Exponential decay estimates for  EP system with damping}\label{Sec6}

As observed in Section \ref{ssec_lina}, the linear analysis demonstrates exponential decay of solutions to the equilibrium when there is no coupling with incompressible flow. Thus, in this section, we consider the following the damped Euler-Poisson system:
\[%\begin{equation}\label{M2}
	\left\{
	\begin{aligned}
		&\partial_t n+u\cdot\nabla n +\text{div}u=0,\\
		&\partial_tu+u\cdot\nabla u+\nabla n+u=-\nabla U, \\
		&-\Delta U=e^n-1,
	\end{aligned}
	\right.
\]%\end{equation}
and show the global-in-time existence and uniqueness of classical solutions and their exponential decay rate of convergences towards the equilibrium, providing a detailed proof of Theorem \ref{Th3}.
%%%%%%%%%%%%%%%%%%%%%%%%%%%%%%%%%%%%%%%%%%
%
%
%
%
%
%	
%%%%%%%%%%%%%%%%%%%%%%%%%%%%%%%%%%%%%%%%%%
\subsection{Proof of Theorem \ref{Th3}}

%\begin{proof}
The proof of local existence theory for Euler-Poisson with damping resembles that of Theorem \ref{lem-loc}. For the global-in-time regularity of solutions, we need the uniform-in-time bound estimates of the solutions in the desired solution space. To this end, we define
\begin{equation*}%\label{ED1definition}
	\mathcal{A}(t)=\|(n, u )(t)\|_{H^3}+\|n(t)\|_{\dot{H}^{-1}} \quad \mbox{and} \quad \mathcal{B}(t)=\|n(t)\|_{H^3}+\|u(t)\|_{H^3},
\end{equation*}
and assume the following a priori assumption on the solutions:
\begin{equation*}%\label{032303}
	\mathcal{A}(t)=\|(n,v)(t)\|_{H^3}+\|n(t)\|_{\dot{H}^{-1}} \leq \delta_*,
\end{equation*}
with $\delta_*$ a sufficiently small positive constant.	

Following a similar process as in Propositions \ref{p1}, \ref{p2}, and \ref{p3}, we obtain that there exists a constant $\bar{C}_0>0$ such that 
\begin{equation}\label{032506}
\frac{d}{d t}\left\{\|(n, u)\|_{L^2}^2+\left\|\nabla (-\Delta)^{-1}n\right\|_{L^2}^2\right\}+\bar{C}_0\|u\|_{L^2}^2\leq 0,
\end{equation}
and for $k=0,1,2$ that
\begin{equation*}%\label{032507}
	\begin{aligned}
		& \frac{1}{2} \frac{d}{d t}\left(\left\|\nabla^k n\right\|_{H^1}^2+\left\|\nabla^{k+1} u\right\|_{L^2}^2\right)+\left\|\nabla^{k+1}u\right\|_{L^2}^2
		 \leq C\mathcal{A}(t)\mathcal{B}^2(t).
	\end{aligned}
\end{equation*}
Summing these inequality over $k$ form 0 to 2 and combining with \eqref{032506}, we have 
\begin{equation}\label{032509}
	\begin{aligned}
		& \frac{1}{2} \frac{d}{d t}\left(\left\| n\right\|_{H^3}^2+\left\|\nabla(-\Delta)^{-1}n\right\|_{L^2}^2+\left\| u\right\|_{H^3}^2\right)+(1+\bar{C}_0)\left\|u\right\|_{H^3}^2
		\leq C\mathcal{A}(t)\mathcal{B}^2(t).
	\end{aligned}
\end{equation}
Parallel to Proposition \ref{p3}, we find
\begin{equation}\label{032508}
	\begin{aligned}
		 \frac{d}{d t}\left\{\sum_{k= 0}^2 \int_{\mathbb{R}^3} \nabla^{k} u \cdot \nabla^{k+1} n \,dx\right\}+\frac{1}{2}\|n\|_{H^3}^2 
		& \leq \bar{C}_1\left(\|\nabla u\|_{H^2}^2+\|u\|_{H^2}^2\right)+C\mathcal{A}(t)\mathcal{B}^2(t)\\
		&\leq  \bar{C}_1\|u\|_{H^3}^2+C\mathcal{A}(t)\mathcal{B}^2(t)
	\end{aligned}
\end{equation}
for some $\bar{C}_1 > 0$ independent of time.

To complete our analysis, we need to establish dissipation estimates for $\|\nabla (-\Delta)^{-1}n\|_{L^2}^2$. We consider the equation:
$$
\partial_tu+\nabla n+u+\nabla (-\Delta )^{-1}n=f_2.
$$
Taking the $L^2$ inner producet with $\nabla (-\Delta )^{-1}n$ in $\mathbb{R}^3$, we find
$$
\begin{aligned}
&\frac{d}{dt}\int_{\mathbb{R}^3}u\cdot\nabla(-\Delta)^{-1}n\,dx+\|\nabla (-\Delta )^{-1}n\|_{L^2}^2+\|n\|_{L^2}^2\\
&\quad =-\int_{\mathbb{R}^3}u\cdot\nabla (-\Delta )^{-1}n\,dx+\int_{\mathbb{R}^3}u\cdot \nabla (-\Delta )^{-1}\partial_tn\,dx+\int_{\mathbb{R}^3}\nabla(-\Delta)^{-1}n\cdot f_2\,dx\\
&\quad =-\int_{\mathbb{R}^3}u\cdot\nabla (-\Delta )^{-1}n\,dx+\int_{\mathbb{R}^3}u\cdot \nabla (-\Delta )^{-1}(-\text{div}u+f_1)\,dx+\int_{\mathbb{R}^3}\nabla(-\Delta)^{-1}n\cdot f_2\,dx\\
&\quad \leq \bar{C}_2 \|u\|_{L^2}^2+\frac{1}{2}\|\nabla (-\Delta )^{-1}n\|_{L^2}^2+
C\mathcal{A}(t)\mathcal{B}^2(t),
\end{aligned}
$$
where $\bar{C}_2>0$ is a positive constant independent of time. This leads to
\begin{equation}\label{032505}
\frac{d}{dt}\int_{\mathbb{R}^3}u\cdot\nabla(-\Delta)^{-1}n\,dx+\frac{1}{2}\|\nabla (-\Delta )^{-1}n\|_{L^2}^2+\|n\|_{L^2}^2\leq 
\bar{C}_2\|u\|_{L^2}^2+C\mathcal{A}(t)\mathcal{B}^2(t).
\end{equation}
Taking $\eqref{032509}+\beta_3\times \eqref{032508}+\beta_3\times\eqref{032505}$ with $\beta_3 > 0$  small enough, we obtain
\begin{equation*}%\label{032510}
	\begin{aligned}
		& \frac{d}{d t}\left\{ \frac{1}{2}\Big(\left\| n\right\|_{H^3}^2+\left\|\nabla(-\Delta)^{-1}n\right\|_{L^2}^2+\left\| u\right\|_{H^3}^2\Big)+\beta_3 \sum_{k= 0}^2 \int_{\mathbb{R}^3} \nabla^{k} u \cdot \nabla^{k+1} n \,dx
		+\beta_3 \int_{\mathbb{R}^3}u\cdot\nabla(-\Delta)^{-1}n\,dx
		\right\}\\
		& \quad +(1+\bar{C}_0)\left\|u\right\|_{H^3}^2+\frac{1}{2}\beta_3\|n\|_{H^3}^2 +\frac{1}{2}\beta_3\|\nabla (-\Delta )^{-1}n\|_{L^2}^2+\beta_3\|n\|_{L^2}^2\\
		&\qquad \leq (\bar{C}_1+\bar{C}_2)\beta_3\|u\|_{H^3}^2+
		C\mathcal{A}(t)\mathcal{B}^2(t)+C\beta_3\mathcal{A}(t)\mathcal{B}^2(t).
	\end{aligned}
\end{equation*}
Due to the smallness of $\beta_3$, we find a positive constant $\bar{C}_3$ such that 
\begin{equation*}
	\begin{aligned}
		& \frac{d}{d t}\left\{\left\| n\right\|_{H^3}^2+\left\|n\right\|_{\dot{H}^{-1}}^2+\left\| u\right\|_{H^3}^2\right\} +\bar{C}_3\left\{\left\| n\right\|_{H^3}^2+\left\|n\right\|_{\dot{H}^{-1}}^2+\left\| u\right\|_{H^3}^2\right\}\\
		&\quad \leq C\mathcal{A}(t)\mathcal{B}^2(t)\leq C\delta_*\mathcal{B}^2(t).
	\end{aligned}
\end{equation*}
According to the definition $\mathcal{B}(t)$ and the smallness of $\delta_*$, we arrive at
\begin{equation}\label{032511}
		\frac{d}{d t}\left\{\left\| n\right\|_{H^3}^2+\left\|n\right\|_{\dot{H}^{-1}}^2+\left\| u\right\|_{H^3}^2\right\}+\frac{1}{2}\bar{C}_3\left\{\left\| n\right\|_{H^3}^2+\left\|n\right\|_{\dot{H}^{-1}}^2+\left\| u\right\|_{H^3}^2\right\}\leq 0.
\end{equation}
Integrating the above inequality with respect to time from $0$ to $t$ yields 
\begin{align*}
\left\| n\right\|_{H^3}^2+\left\|n\right\|_{\dot{H}^{-1}}^2+\left\| u\right\|_{H^3}^2+\frac{1}{2}\bar{C}_3\int_0^t\left\{\left\| n(s)\right\|_{H^3}^2+\left\|n(s)\right\|_{\dot{H}^{-1}}^2+\left\| u(s)\right\|_{H^3}^2\right\}ds \leq C\delta_0^2,
\end{align*}
which guarantees the uniform-in-time boundedness of solutions. Then by a continuation argument, we establish the global-in-time well-posedness.

Moreover, applying Gr\"{o}nwall's lemma to \eqref{032511}, we have
\begin{equation*}
\left\| n(t)\right\|_{H^3}+\left\|n(t)\right\|_{\dot{H}^{-1}}+\left\| u(t)\right\|_{H^3}\leq C\delta_0e^{-\frac{1}{2}\bar{C}_3 t}.
\end{equation*} 
Finally, we set $C_*=\frac{1}{2}\bar{C}_3$ to complete the proof of Theorem \ref{Th3}.

%	\end{proof}
	\vskip 5mm

%%%%%%%%%%%%%%%%%%%%%%%%%%%%%%%%%%%%%%%%%%
%
%
%
%
%
%	
%%%%%%%%%%%%%%%%%%%%%%%%%%%%%%%%%%%%%%%%%%
\subsubsection*{Acknowledgments} The work of the first author was supported by National Research Foundation of Korea (NRF) grant funded by the Korea government (MSIP) (No. 2022R1A2C1002820). The work of the second author was supported by China Postdoctoral Science Foundation (Grant No. 2023M733691). The work of the third author was supported by  National Natural Science Foundation of China (Grant No. 12001033).
%%%%%%%%%%%%%%%%%%%%%%%%%%%%%%%%%%%%%%%%%%
%
%
%
%
%
%	
%%%%%%%%%%%%%%%%%%%%%%%%%%%%%%%%%%%%%%%%%%

			\end{document}